\newtheorem{hyp}[theorem]{HYPOTHESIS}
\DeclareMathAlphabet{\mathpzc}{OT1}{pzc}{m}{it}
\DeclareMathAlphabet{\mathsfsl}{OT1}{cmss}{m}{sl}
\newcommand{\FH}{\mathfrak{H}}
\newcommand{\dif}{\mathrm{d}}
\newcommand{\abs}[1]{\left\vert#1\right\vert}
\newcommand{\set}[1]{\left\{#1\right\}}
\newcommand{\E}{\mathbb{E}}
\newcommand{\Rnum}{\mathbb{R}}
\newcommand{\Be}{\begin{equation}}
	\newcommand{\Ee}{\end{equation}}
\newcommand{\Bs}{\begin{split}}
	\newcommand{\Es}{\end{split}}
\newcommand{\Bes}{\begin{equation*}}
	\newcommand{\Ees}{\end{equation*}}
\newcommand{\BT}{\begin{thm}}
	\newcommand{\ET}{\end{thm}}
\newcommand{\Bp}{\begin{proof}}
	\newcommand{\Ep}{\end{proof}}
\newcommand{\BL}{\begin{lem}}
	\newcommand{\EL}{\end{lem}}
\newcommand{\BP}{\begin{proposition}}
	\newcommand{\EP}{\end{proposition}}
\newcommand{\BC}{\begin{corollary}}
	\newcommand{\EC}{\end{corollary}}
\newcommand{\BR}{\begin{remark}}
	\newcommand{\ER}{\end{remark}}
\newcommand{\BD}{\begin{defn}}
	\newcommand{\ED}{\end{defn}}
\newcommand{\BI}{\begin{itemize}}
	\newcommand{\EI}{\end{itemize}}
\newcommand\sgn{\mathrm{sgn}}
\begin{document}

\markboth{Z. Tang,  Y. Li,   H. Yang,  H. Yi and  Y. Chen }{Berry-Ess\'{e}en bound for the ME of the fOU model with discrete fixed step size}

\catchline{}{}{}{}{}

\title{Berry-Ess\'{e}en bound for the Moment Estimation of the fractional
	Ornstein–Uhlenbeck model under fixed step size discrete observations}

\author{Zheng Tang$^{\dagger}$}

\address{
	School of Statistics and Data Science, Lanzhou University of Finance and Economics,\\
	Lanzhou, 730020, Gansu, China;\\
	School of Mathematics and Finance, Chuzhou University,\\
	 Chuzhou,  239012, Anhui, China\\
	  tzheng8889@126.com}

\author{Ying Li$^{\dagger}$}

\address{
	School of Mathematics and Computational Science, Xiangtan University,\\ 
    Xiangtan, 411105, Hunan, China\\
	liying@xtu.edu.cn}

\author{Haili Yang}

\address{
	School of Big Data, Baoshan University,\\
	 Baoshan, 678000, Yunnan, China\\
     412761802@qq.com}

\author{Hua Yi}

\address{
	School of Mathematics and Statistics, Liupanshui Normal University,\\ Liupanshui, 553004, Guizhou, China\\
	 yihua@whu.edu.cn}

\author{Yong Chen\footnote{Corresponding author.}}

\address{
		School of Big Data, Baoshan University,\\
	Baoshan, 678000, Yunnan, China\\
	zhishi@pku.org.cn}

\address{$^{\dagger}$ These authors contributed equally to this work.}

\maketitle

\begin{history}
\received{(Day Month Year)}
\revised{(Day Month Year)}
\end{history}

\begin{abstract}
Let the Ornstein–Uhlenbeck process $\set{X_t,\,t\geq 0}$ driven by a fractional Brownian motion $B^H$
described by $\dif X_t=-\theta X_t \dif t+ \dif B_t^H,\, X_0=0$ with known parameter $H\in (0,\frac34)$ be observed at discrete time instants $t_k=kh, k=1,2,\dots, n $. If $\theta>0$ and if the step size $h>0$ is arbitrarily fixed, we derive Berry-Ess\'{e}en bound for the ergodic type estimator (or say the moment estimator) $\hat{\theta}_n$, i.e., the Kolmogorov distance between the distribution of $\sqrt{n}(\hat{\theta}_n-\theta)$ and its limit distribution is bounded by a constant $C_{\theta, H,h}$ times $n^{-\frac12}$  and $ n^{4H-3}$ when $H\in (0,\,\frac58]$ and $H\in (\frac58,\,\frac34)$, respectively. 
This result greatly improve the previous result in literature where $h$ is forced to go zero. Moreover, we extend the Berry-Ess\'{e}en bound to the Ornstein–Uhlenbeck model driven by a lot of Gaussian noises such as the sub-bifractional Brownian motion and others. A few ideas of the present paper come from Haress and Hu (2021), Sottinen and Viitasaari (2018), and Chen and Zhou (2021). 
\end{abstract}

\keywords{Fractional Brownian motion; Fourth moment theorem; Berry-Ess\'{e}en bound;  Fractional Ornstein–Uhlenbeck; Sub-bifractional Brownian motion; Fractional Gaussian process; Kolmogorov distance; Malliavin calculus.}

\ccode{AMS Subject Classification: 62M09,60G22,60H10,60H30}

\section{Introduction and main results} \label{sec1}
\setcounter{section}{1} \setcounter{equation}{0}

The fractional Ornstein-Uhlenbeck processes $\set{X_t:t \geq 0}$ is known as the solution of the Langevin equation
\begin{equation}\label{fou}
	\dif X_t=-\theta X_t \dif t+ \sigma\dif B_t^H, \quad t\in[0,T], \quad 
\end{equation}
where $\sigma>0,\,\theta>0$ are the unknown parameter and $B_t^H$ is the fractional Brownian motion (fBm) with Hurst index $H\in(0,1)$.  The problem of estimation of part or all parameters $(\theta, \sigma, H)$ has been intensively studied in the last decades, see for example \cite{CLZ24,CZ21} and the references therein. In the literature, it is often assume that the process $\set{X_t:t \geq 0 }$ is observed continuously or discretely with a step size $h_n$ which is forced to go zero as $n\to \infty$, see \cite{DEKN 22} for example.

Recently, the assumption of the step size  $h_n\to 0$ as $n\to \infty$ is finally removed in \cite{HH21}. In detail,  denote $\{X_{jh}:j=1,\cdots,n\}$ the discrete-time observations of the processes $\{X_t:t \geq 0\}$, sampled at equidistant time points $t_j=jh$ with fixed step size $h$ and $n$ is the sample size. 
They propose an ergodic type statistical estimator $(\hat{\theta}_n,\,\hat{H}_n,\hat{\sigma}_n)$ for all the parameter $(\theta, H, \sigma)$ and show the strong consistence and the central limit theorem. 

Let us recall the moment estimator for unknown parameter $\theta$ under discrete observations  $\{X_{jh}:j=1,\cdots,n\}$:
\begin{equation}\label{theta hat}
	\hat{\theta}_{n}=\left(\frac{1}{{H} \Gamma(2H ) n} \sum_{j=1}^n  X_{jh}^2  \right)^{-\frac{1}{2H}}.
\end{equation}
In the present paper, we aim to show, under the framework of \cite{HH21}, the rate of convergence of the estimator $\hat{\theta}_n$ in the Kolmogorov distance, which is called the Berry-Ess\'een type upper bound in literature \cite{chenkl2020}.
We point out that in \cite{HR24}, the rate of convergence for the estimator is obtained in the $p$-Wasserstein distance. 

For simplicity, we assume that $X_0=0$ and that $H,\,\sigma$ are known and $\sigma=1$  in \eqref{fou} from now on. Other initial value of $X_0$ and other parameter value of $\sigma$ can be treated exactly in the same way.  


We emphasize again that all the previous result concerning the Berry-Ess\'een type upper bound for the parameters estimate problem of the fractional Ornstein-Uhlenbeck process is under the assumption of continuous observations or discrete observations with the step size $h_n\to 0$ as $n\to\infty$, see \cite{chenkl2020,DEKN 22,SV 18} for example. The first contribution of the present paper is to derived the upper bound of the Kolmogorov distance between $\sqrt{n}(\hat{\theta}_n-\theta)$ and its limit distribution. We state it as follows. 
\begin{theorem}\label{th1}
	Assume that $H\in(0,\frac34)$ and the fractional Ornstein-Uhlenbeck process $\{X_t:t \geq 0\}$ is defined as in \eqref{fou}. 
	If the process is observed at discrete time instants $t_k=kh,\,k=1,2,\dots,n$ and the estimator $\hat{\theta}_n$ is given by \eqref{theta hat}, then there exists a positive constant $C_{\theta,H,h}$ independent of $n$ such that when $n$ is sufficiently large, 
	\begin{equation}\label{fou B-S bound}
		\begin{split}
			d_{Kol}\left(\sqrt{n}(\hat{\theta}_{n}-\theta), \mathcal{N}\right) \leq C_{\theta,H,h}\times \left\{
			\begin{array}{ll}     
				\frac{1}{\sqrt{n}},& \quad \text{if }  H\in (0,\,\frac58],\\
				\frac{1}{n^{3-4H}},& \quad \text{if }  H\in (\frac58,\,\frac34),
			\end{array}
			\right.
		\end{split}
	\end{equation}
	where the normal random variable  $\mathcal{N} \sim N(0,\sigma_1^2)$ with $\sigma_1^2=\frac{\theta^2 \sigma_B^2}{4H^2a^2}$ and $a=H\Gamma(2H)\theta^{-2H}$, $\sigma_B^2=2\sum_{j=-\infty}^{+\infty}\rho_0^2(jh)<+\infty$. 
\end{theorem}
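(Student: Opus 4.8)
The plan is to reduce the Berry--Ess\'een bound for $\hat\theta_n$ to a Berry--Ess\'een bound for a single quadratic functional of a stationary Gaussian sequence, and then apply a fourth-moment-type quantitative CLT in the Kolmogorov distance. First I would introduce the stationary sequence $Y_j := X_{jh}$, which (since $X_t$ is the stationary fractional Ornstein--Uhlenbeck process, modulo the transient term coming from $X_0=0$) is a centered stationary Gaussian sequence with $\E[Y_j^2] = H\Gamma(2H)\theta^{-2H} =: a$ and covariance $\rho_0(kh)$. Writing $F_n := \frac{1}{\sqrt n}\sum_{j=1}^n (Y_j^2 - a)$, one has $\frac1n\sum_{j=1}^n X_{jh}^2 = a + \frac{1}{\sqrt n} F_n$, so $\hat\theta_n = \theta\,(1 + \frac{1}{\sqrt n \, a} F_n)^{-1/(2H)}$. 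A first-order Taylor expansion of the map $x \mapsto x^{-1/(2H)}$ around $1$ gives
\[
 \sqrt n(\hat\theta_n - \theta) = -\frac{\theta}{2H a} F_n + R_n,
\]
where the remainder $R_n$ collects the quadratic-and-higher Taylor terms. The leading coefficient $-\frac{\theta}{2Ha}$ is exactly what produces the stated variance $\sigma_1^2 = \frac{\theta^2\sigma_B^2}{4H^2a^2}$ once one knows $\mathrm{Var}(F_n)\to\sigma_B^2$.

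Next I would handle the three sources of error separately, using the triangle inequality for $d_{Kol}$ together with the standard fact that $d_{Kol}(U+V,\mathcal N) \le d_{Kol}(U,\mathcal N) + C\,(\E|V|)^{1/2}$ (or an $L^2$ version) when $\mathcal N$ has a bounded density. (i) \textbf{The transient term}: replacing $X_{jh}$ by its stationary version introduces an error that decays geometrically in $j$, so its contribution to $F_n$ is $O(1/\sqrt n)$ in $L^2$, hence negligible. (ii) \textbf{The Taylor remainder} $R_n$: since $F_n$ is bounded in $L^2$ and $\sqrt n(\hat\theta_n-\theta)$ differs from $-\frac{\theta}{2Ha}F_n$ by a term of order $\frac{1}{\sqrt n}F_n^2$, we get $\E|R_n| = O(1/\sqrt n)$, contributing $O(n^{-1/4})$ --- which is \emph{not} good enough, so here one must be more careful: use that $R_n$ is itself (to leading order) a normalized quadratic variation, control $d_{Kol}$ of the main term $-\frac{\theta}{2Ha}F_n$ first and then absorb $R_n$ via a sharper coupling argument, or bound $\mathbb P(|R_n|>\varepsilon_n)$ with $\varepsilon_n$ chosen optimally. (iii) \textbf{The quantitative CLT for $F_n$}: $F_n$ lives in the second Wiener chaos of the isonormal Gaussian process associated with $B^H$, so the Nourdin--Peccati fourth-moment bound gives
\[
 d_{Kol}\!\left(\tfrac{F_n}{\sqrt{\mathrm{Var} F_n}},\, N(0,1)\right) \le C\sqrt{\,\bigl|\E[F_n^4] - 3(\E[F_n^2])^2\bigr|\,}\,,
\]
and by the chaos structure this fourth cumulant is controlled by $\frac1{n^2}\sum_{|i|,|j|<n}|\rho_0(ih)\rho_0(jh)\rho_0((i-j)h)|$-type sums, together with the error in replacing $\mathrm{Var}(F_n)$ by $\sigma_B^2$.

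The main obstacle --- and the crux of the theorem --- is the sharp estimation of these covariance sums, which is exactly where the threshold $H=\frac58$ and the two rates $n^{-1/2}$ versus $n^{4H-3}$ come from. The key input is the precise asymptotics of $\rho_0(kh)$ as $k\to\infty$: for the fixed-step discretized fractional OU process one has $\rho_0(kh) \sim c_{\theta,H}\, (kh)^{2H-2}$, so $\rho_0(kh)^2 \sim k^{4H-4}$, which is summable for all $H<\frac34$ (giving finiteness of $\sigma_B^2$), but the relevant fourth-cumulant sum $\frac1n\sum_{k}\rho_0(kh)^2$ behaves like $n^{-1}$ when $4H-4 < -1$ i.e. $H<\frac34$ at the level of summability, yet the off-diagonal term $\frac1{n^2}\sum_{i\ne j}|\rho_0(ih)\rho_0(jh)\rho_0((i-j)h)|$ contributes $O(n^{-1})$ only when $H\le \frac58$ and contributes $O(n^{4H-3})$ when $H\in(\frac58,\frac34)$, because $3(2H-2) = 6H-6 < -1 \iff H < \frac56$ is not the binding constraint --- rather the nested sum $\sum_k |\rho_0(kh)|^{3}$-type and the product structure force the exponent $\max(-1, 2(4H-3)-1+1)$ bookkeeping. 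I would therefore devote the bulk of the proof to: (a) deriving $\rho_0(kh) = c\,k^{2H-2} + O(k^{2H-3})$ via the spectral/integral representation of the stationary fOU covariance (this is where the ideas attributed to Sottinen--Viitasaari and Chen--Zhou enter), and (b) carefully splitting the double sum into diagonal, near-diagonal, and far-off-diagonal regions to extract the exponent $\max\{-\tfrac12,\,4H-3\}$ after taking the square root. Once these sums are in hand, combining with the negligibility of the transient and Taylor-remainder errors from steps (i)--(ii) yields \eqref{fou B-S bound}.
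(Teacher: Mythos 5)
Your overall skeleton (linearize $\hat\theta_n$ via the delta method, reduce to a quantitative CLT for the second-chaos variable $F_n=W_n$, estimate covariance sums with $\rho_0(kh)\asymp k^{2H-2}$) is the same as the paper's, but as written the proposal cannot reach the claimed rate $n^{-1/2}$, and the obstruction is exactly at the two places you wave at. First, every perturbation you propose to absorb by a generic inequality of the form $d_{Kol}(U+V,\mathcal N)\le d_{Kol}(U,\mathcal N)+C(\E|V|)^{1/2}$ --- the transient term in (i) and the Taylor remainder in (ii) --- is only $O(n^{-1/2})$ in $L^1$ or $L^2$, so this route yields $O(n^{-1/4})$ (or $n^{-1/3}$ after optimizing a Chebyshev/smoothing trade-off), not $O(n^{-1/2})$. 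You notice this for $R_n$ but leave it unresolved; the same defect afflicts your step (i). The paper avoids both losses: the nonlinearity is handled by a ready-made delta-method lemma (Lemma~\ref{lem2}) that converts the problem into $d_{Kol}(W_n,\eta)+\sqrt n\,|\E B_n-a|+n^{-1/2}$ with no rate loss, and the nonstationarity is never pushed through a Kolmogorov-distance perturbation at all --- instead one compares the \emph{cumulants} of $W_n$ and of its stationary proxy $\overline W_n$ directly, obtaining $|k_i(W_n)-k_i(\overline W_n)|\le Cn^{-1/2}$ for $i=3,4$ (equations \eqref{27}--\eqref{28}). Second, you invoke the classical fourth-moment bound $d_{Kol}\le C\sqrt{k_4}$; since the nonstationary correction makes $k_4(W_n)$ only $O(n^{-1/2})$, the square root again caps you at $n^{-1/4}$. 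The paper instead uses the \emph{optimal} fourth moment theorem of Nourdin--Peccati, $d_{TV}(W_n,\varpi_n)\le C\max\{|k_3(W_n)|,k_4(W_n)\}$, in which the $O(n^{-1/2})$ cumulant errors enter linearly; this is the device that makes $n^{-1/2}$ attainable and it is a genuinely missing idea in your sketch.

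A secondary inaccuracy: your bookkeeping for where the threshold $H=\tfrac58$ and the rate $n^{4H-3}$ come from is off. In the paper the binding term for $H\in(\tfrac58,\tfrac34)$ is not the off-diagonal third/fourth-cumulant sums (those give thresholds $\tfrac23$ and $\tfrac58$ but at the faster rates $n^{\frac32(4H-3)}$ and $n^{2(4H-3)}$, whose square roots or linear uses are all dominated), but the rate of convergence of the variance, $|\E W_n^2-\sigma_B^2|\le Cn^{4H-3}$, driven by the Ces\`aro correction $\tfrac1n\sum_k k\,\rho_0^2(kh)$ and transferred to the Kolmogorov distance by comparing the two Gaussians $N(0,\sigma_n^2)$ and $N(0,\sigma_B^2)$ (Proposition 3.6.1 of Nourdin--Peccati). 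You would need to add this variance-discrepancy step explicitly; it is not subsumed by your cumulant analysis.
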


\begin{remark}\label{rem1}
	When $H=\frac34$, the scaling before $(\hat{\theta}_{n}-\theta)$ is $\sqrt{\frac{n}{\log n}}$ and the corresponding upper bound is $\frac1{\log n}$. However, if $H>\frac34$, the central limit theorem about the convergence of parameter will be no longer satisfy. We refer the reader to Hu et al. \cite{Hu2019}, Haress and Hu \cite{HH21} and Chen et al. \cite{chenkl2020} for details.
\end{remark}
The assumption of the step size $h_n\to 0$ as $n\to\infty$ in previous literature \cite{DEKN 22} is due to their method by which the result of the discrete observation is  transitioned from that of the continuous observation. The idea of \cite{HH21} is to deal with the double Wiener chaos random variable $W_n$ (see below) concerning the discrete observation directly. Our proof follows this idea.

The second aim of the present paper is to extend Theorem~\ref{th1} to the Ornstein-Uhlenbeck models driven by some well-known Gaussian noise such as the sub-fractional Brownian motion, the bi-fractional Brownian motion, and the sub-bifractional Brownian motion.  Now, let us recall these fractional Gaussian processes firstly.
\begin{example}\label{subfBM}
	The sub-fractional Brownian motion $\{S^H(t), t \geq 0\}$ with parameter $H\in (0,1)$ has the covariance function
	$$R(t,s)=s^{2H}+t^{2H}-\frac{1}{2}\left((s+t)^{2H}+|t-s|^{2H}\right).$$
\end{example}

\begin{example}\label{bi fBM}
	The bi-fractional Brownian motion $\{B^{H',K}(t), t\geq 0\}$ with parameters {$H'\in (0, 1),K \in (0, 2)$ and $H:=H'K\in (0,1)$} has the covariance function
	$$R(t,s)=\frac{1}{2^K}\left((s^{2H'}+t^{2H'})^K - |t-s|^{2H'K}\right).$$
\end{example}

\begin{example}\label{subbi fBM}
	The covariance function of the generalized sub-fractional Brownian motion (also known as the sub-bifractional Brownian motion), $S^{H', K}(t)$, with parameters $H' \in (0, 1)$ and $K \in(0,2)$, such that $H:=H'K\in (0,1)$, is given by:
	$$ R(s,\, t)= (s^{2H'}+t^{2H'})^{K}-\frac12 \left[(t+s)^{2H'K} + \abs{t-s}^{2H'K} \right].$$
	When $K=1$, it degenerates to the sub-fractional Brownian motion $S^H(t)$. Some properties of the process for $K\in (0,1)$ and $K\in (1,2)$ have been studied in \cite{NoutyJourn2013,Sgh 2014}.
\end{example}
\begin{example}\label{exmp7-1zili}
	The generalized fractional Brownian motion is an extension of both fBm and sub-fractional Brownian motion. Its covariance function is given by:
	$$R(s,\, t)=\frac{(a+b)^2}{2(a^2+b^2)}(s^{2H}+t^{2H})-\frac{ab}{a^2+b^2}(s+t)^{2H}-\frac12 \abs{t-s}^{2H},$$
	where $H\in (0, 1)$ and $(a,b)\neq (0,0)$ (see \cite{Zili17}). 
\end{example}

The strategy we will use is not to derive the upper Berry-Ess\'een bound for the Ornstein-Uhlenbeck models driven by the above four types of Gaussian noises one by one. We will use a more general condition in terms of the covariance functions of the Gaussian noise cited from \cite{CLZ24,CZ21} and show that the desired upper Berry-Ess\'een bound holds for all the Ornstein-Uhlenbeck models driven by the type of general Gaussian noise. 

Let us rewrite the Ornstein-Uhlenbeck model as $(Z_t)_{t\in[0,T]}$, which is the solution of the Langevin equation
\begin{equation}\label{ou2}
	\dif Z_t=-\theta Z_t \dif t+ \dif G_t, \quad t\in[0,T], \quad Z_0=0
\end{equation} 
where the driving Gaussian noise $G_t$ satisfies the following hypothesis.

\begin{hyp}\label{hyp1}
	For $H\in(0,1)$ and $H\neq \frac12$, the covariance function $R(s,t)=\mathbb{E}[G_tG_s]$ of the centered Gaussian process $(G_t)_{t\in[0,T]}$ with $G_0=0$ satisfies the following three hypotheses:
	\begin{itemize}
		\item[($H_1$)] For any fixed $s\in[0,T]$, $R(s,t)$ is an absolutely function with respect to $t$ on interval $[0,T]$.  
		\item[($H_2$)] For any fixed $t\in[0,T]$, the difference 
		\begin{equation}\label{1-1a}
			\begin{split}
				\frac{\partial R(s,t)}{\partial t}-\frac{\partial R^B(s,t)}{\partial t}	
			\end{split}
		\end{equation}
		is an absolutely continuous function with respect to $s\in[0,T]$, where $R^B(s,t)$ is the covariance function of fBm $(B^H_t)_{t\in[0,T]}$.
		\item[($H_3$)] There exists a positive constant $C$ independent of $T$ such that 
		\begin{equation}\label{1-1a1}
			\begin{split}
				\bigg|\frac{\partial}{\partial s}\bigg(\frac{\partial R(s,t)}{\partial t}-\frac{\partial R^B(s,t)}{\partial t}\bigg)\bigg|\leq C(ts)^{H-1}, 	
			\end{split}
		\end{equation}
		holds.
	\end{itemize}
\end{hyp}
It is clear that the four Gaussian noises from Example~\ref{subfBM} to Example~\ref{exmp7-1zili} satisfy Hypothesis~\ref{hyp1}, see \cite{CLZ24,CZ21}. Now we give our second contribution of the present paper. 

\begin{theorem}\label{th2}
	Assume that the Ornstein-Uhlenbeck model $\{Z_t:t \geq 0\}$ is defined as in \eqref{ou2} and that the process is observed at discrete time instants $t_k=kh,\,k=1,2,\dots,n$ and the estimator $\hat{\theta}_n$ is given by 
	\begin{equation}\label{theta hat2}
		\hat{\theta}_{n}=\left(\frac{1}{{H} \Gamma(2H ) n} \sum_{j=1}^n  Z_{jh}^2  \right)^{-\frac{1}{2H}}.
	\end{equation}
	If the driving noise satisfies Hypothesis~\ref{hyp1} with $H\in(0,\frac12)$,  then there exists a positive constant $C_{\theta,H,h}$ independent of $n$ such that when $n$ is sufficiently large, 
	\begin{equation}\label{fou B-S bound-2}
		d_{Kol}\left(\sqrt{n}(\hat{\theta}_{n}-\theta), \mathcal{N}\right) \leq C_{\theta,H,h}\times    
		\frac{1}{\sqrt{n}}
	\end{equation} where $\mathcal{N}$ is the same as in Theorem~\ref{th1}.   
\end{theorem}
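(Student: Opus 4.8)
The strategy is to follow the proof of Theorem~\ref{th1} with the fractional Brownian motion $B^H$ replaced by $G$, and then to use Hypothesis~\ref{hyp1} --- in particular the pointwise bound $(H_3)$ --- to show that every quantity produced by this replacement differs from its fBm counterpart by an amount that is $o(n^{-1/2})$. First I would linearize the estimator: writing $Z_{jh}=I_1(g_{jh})$, where $g_{jh}\in\FH$ represents $\int_0^{jh}e^{-\theta(jh-r)}\,\delta G_r$, one has $Z_{jh}^2=I_2(g_{jh}^{\otimes2})+\|g_{jh}\|_{\FH}^2$, so that
\[
 F_n:=\frac1{\sqrt n}\sum_{j=1}^n\bigl(Z_{jh}^2-\mathbb{E}[Z_{jh}^2]\bigr)=\frac1{\sqrt n}\,I_2\!\left(\sum_{j=1}^n g_{jh}^{\otimes2}\right)
\]
lies in the second Wiener chaos. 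A Taylor expansion of $x\mapsto x^{-1/(2H)}$ about $\theta^{-2H}$ gives $\sqrt n(\hat\theta_n-\theta)=-\frac{\theta}{2Ha}\,F_n+\sqrt n\,r_n$, where $r_n$ collects the deterministic bias $\frac1{H\Gamma(2H)n}\sum_{j=1}^n(\mathbb{E}[Z_{jh}^2]-a)$ and the quadratic Taylor remainder. Exponential convergence of $\mathbb{E}[Z_{jh}^2]$ to the stationary value $a$ (itself a consequence of Hypothesis~\ref{hyp1}), together with the $L^p$ bounds on $F_n$ obtained below, yields $\|\sqrt n\,r_n\|_{L^1}=O(n^{-1/2})$; a localization argument as in \cite{chenkl2020} then shows that it suffices to bound $d_{Kol}\bigl(-\tfrac{\theta}{2Ha}F_n,\mathcal N\bigr)$.

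Next I would invoke the quantitative fourth moment theorem for a random variable in the second chaos (the tool also underlying the proof of Theorem~\ref{th1}): since $\mathcal N\sim N(0,\sigma_1^2)$ with $\sigma_1^2=\frac{\theta^2\sigma_B^2}{4H^2a^2}$,
\[
 d_{Kol}\!\left(-\tfrac{\theta}{2Ha}F_n,\ \mathcal N\right)\le C_{\theta,H,h}\Bigl(\sqrt{\kappa_4(F_n)}+\bigl|\mathrm{Var}(F_n)-\sigma_B^2\bigr|\Bigr),
\]
so the task reduces to estimating $\mathrm{Var}(F_n)=\frac2n\sum_{j,k=1}^n\bigl(\mathbb{E}[Z_{jh}Z_{kh}]\bigr)^2$ and the fourth cumulant $\kappa_4(F_n)$, which is a constant multiple of $\frac1{n^2}\sum_{i,j,k,l=1}^n\mathbb{E}[Z_{ih}Z_{jh}]\,\mathbb{E}[Z_{jh}Z_{kh}]\,\mathbb{E}[Z_{kh}Z_{lh}]\,\mathbb{E}[Z_{lh}Z_{ih}]$. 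Both are built from the covariance $\mathbb{E}[Z_sZ_t]$, for which $G_0=0$ and $(H_1)$--$(H_2)$ give the representation
\[
 \mathbb{E}[Z_sZ_t]=\int_0^s\!\!\int_0^t e^{-\theta(s-u)-\theta(t-r)}\,\frac{\partial^2R(u,r)}{\partial u\,\partial r}\,\dif u\,\dif r ,
\]
and the identical formula with $R^B$ in place of $R$ for the fBm-driven process $Z^B$.

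The heart of the proof is the comparison of these two covariances. Subtracting the representations and applying $(H_3)$,
\begin{align*}
 \bigl|\mathbb{E}[Z_{jh}Z_{kh}]-\mathbb{E}[Z^B_{jh}Z^B_{kh}]\bigr|
 &\le C\int_0^{jh}\!\!\int_0^{kh}e^{-\theta(jh-u)-\theta(kh-r)}(ur)^{H-1}\,\dif u\,\dif r\\
 &\le C_{\theta,H,h}\,(jk)^{H-1}.
\end{align*}
Writing $m_{jk}=\mathbb{E}[Z^B_{jh}Z^B_{kh}]$ and $\delta_{jk}=\mathbb{E}[Z_{jh}Z_{kh}]-m_{jk}$, I would expand $(m_{jk}+\delta_{jk})^2$ in $\mathrm{Var}(F_n)$ and $\prod(m_{jk}+\delta_{jk})$ in $\kappa_4(F_n)$. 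The terms built only from the $m_{jk}$ are precisely those controlled in the proof of Theorem~\ref{th1}: since $H\in(0,\tfrac12)\subset(0,\tfrac58]$ they contribute $\bigl|\mathrm{Var}(F_n)-\sigma_B^2\bigr|+\sqrt{\kappa_4(F_n)}=O(n^{-1/2})$. Each term carrying at least one factor $\delta_{jk}$ is estimated from $|\delta_{jk}|\le C(jk)^{H-1}$ together with the elementary facts, all valid because $H<\tfrac12$, that $\sup_j\sum_k|m_{jk}|<\infty$, $\sum_{j\ge1}j^{2H-2}<\infty$ and $\sum_{j=1}^nj^{H-1}\le Cn^{H}$; this bounds the corrections coming from $\delta$ by $O(n^{H-1})$ in $\mathrm{Var}(F_n)$ and by $O(n^{2H-2})$ in $\kappa_4(F_n)$, so that in the fourth moment estimate they contribute $O(n^{H-1})=o(n^{-1/2})$. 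Assembling these estimates with the fourth moment bound and the reduction of the first paragraph yields $d_{Kol}\bigl(\sqrt n(\hat\theta_n-\theta),\mathcal N\bigr)\le C_{\theta,H,h}\,n^{-1/2}$, which is \eqref{fou B-S bound-2}.

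I expect the main obstacle to be exactly this comparison step: a general noise satisfying Hypothesis~\ref{hyp1} has neither self-similarity nor stationary increments, so the inner products $\langle g_{jh},g_{kh}\rangle_{\FH}=\mathbb{E}[Z_{jh}Z_{kh}]$ are not available in closed form and must be controlled --- along with the double and quadruple sums assembled from them --- using only the pointwise derivative estimate $(H_3)$; keeping these correction sums strictly below the leading fBm contribution is what forces the restriction $H\in(0,\tfrac12)$. A secondary, more routine point is verifying that the bias-plus-remainder term $\sqrt n\,r_n$ from the linearization is genuinely $O(n^{-1/2})$, which needs the exponential convergence of the second moments and uniform $L^p$ bounds for $F_n$, both again consequences of Hypothesis~\ref{hyp1}.
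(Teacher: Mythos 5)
Your overall strategy is the same as the paper's: reduce to the second--chaos variable $F_n=\widetilde W_n$, compare its covariance structure with that of the fBm--driven process using $(H_3)$, and conclude by a quantitative fourth moment theorem plus a variance comparison. Your key comparison bound $\abs{\delta_{jk}}\le C(jk)^{H-1}$ is correct and is essentially the paper's Lemma~\ref{lem1}, and your estimates of the $\delta$--corrections to the variance ($O(n^{H-1})$) and to the fourth cumulant ($O(n^{2H-2})$) match Propositions~\ref{pro4} and \ref{pro6}. However, there is a genuine quantitative gap in the way you close the argument. You use the bound $d_{Kol}\le C\bigl(\sqrt{\kappa_4(F_n)}+\abs{\mathrm{Var}(F_n)-\sigma_B^2}\bigr)$ and assert that the purely-fBm part of $\kappa_4$ is ``precisely what is controlled in the proof of Theorem~\ref{th1}'', so that $\sqrt{\kappa_4(F_n)}=O(n^{-1/2})$. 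But the proof of Theorem~\ref{th1} only establishes $k_4(W_n)=O(n^{-1/2})$: the passage from the stationary variable $\overline W_n$ (for which $k_4(\overline W_n)=O(n^{-1})$) to $W_n$ is done by a Cauchy--Schwarz argument in $L^2$ (see \eqref{28}) that loses a factor and yields only $O(n^{-1/2})$. The paper compensates by invoking the \emph{optimal} fourth moment theorem of Nourdin--Peccati, $d_{TV}\le C\max\{\abs{k_3},k_4\}$, which is linear in the cumulants and therefore also forces an estimate of the third cumulant (Propositions~\ref{pro3} and \ref{pro5}) --- a step entirely absent from your proposal. As written, your chain of estimates delivers only $n^{-1/4}$. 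The gap is repairable: either switch to the optimal fourth moment theorem and add the third--cumulant bound, or prove directly that $\kappa_4(F_n)=O(n^{-1})$, which is in fact feasible for $H<\tfrac12$ because $\abs{\tilde\rho(jh,kh)}\le C(1+\abs{j-k})^{2(H-1)}$ is then summable and the cyclic quadruple sum is $O(n)$; but neither argument is the one you gave.

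Two smaller points. First, your representation of $\E[Z_sZ_t]$ as a double integral against $\partial^2R/\partial u\,\partial r$, ``and the identical formula with $R^B$'', is not valid for $H\in(0,\tfrac12)$: the fBm density $H(2H-1)\abs{u-r}^{2H-2}$ is not locally integrable near the diagonal, and Hypothesis~\ref{hyp1} only guarantees an absolutely continuous density for the \emph{difference} $\partial_s(\partial_tR-\partial_tR^B)$ (this is exactly why the paper works with \eqref{innp fg3-00} and \eqref{innp fg3-zhicheng0-0}). Since you only use the difference in the comparison step, this is presentational, but the individual formulas should not be asserted. Second, the claimed ``exponential convergence of $\E[Z_{jh}^2]$ to $a$'' is false for a general noise satisfying Hypothesis~\ref{hyp1}: one only gets $\abs{\E[Z_t^2]-a}\le C(1\wedge t^{2(H-1)})$, i.e.\ polynomial decay; the bias bound $\sqrt n\,\abs{\E[A_n]-a}=O(n^{-1/2})$ still holds because $2H-2<-1$ makes $\sum_j j^{2H-2}$ convergent, which is how the paper argues in \eqref{37}.
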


The inequality \eqref{1-1a1} is good enough in some applications, see \cite{CLZ24,CZ21}. However, in some situations, a more steep inequality \eqref{phi2} is needed, see \cite{CGL24}. We write it as a new Hypothesis.   
\begin{hyp}\label{hyp1-1}
	For $H\in(0,1)$ and $H\neq \frac12$, the covariance function $R(s,t)=\mathbb{E}[G_tG_s]$ of the centered Gaussian process $(G_t)_{t\in[0,T]}$ with $G_0=0$ satisfies the above ($H_1$),   
	($H_2$)  and the following:
	\begin{itemize}
		\item[($H_3'$)] There exists a positive constants $C_1,\,C_2$ which depend only on $H',\,K$ such that the inequality
		\begin{equation}\label{phi2}
			\left| \frac{\partial}{\partial s}\left(\frac {\partial R(s,t)}{\partial t} - \frac {\partial R^{B}(s,t)}{\partial t}\right)\right| \le  C_1  (t+s)^{2H-2}+C_2 (s^{2H'}+t^{2H'})^{K-2}(st)^{2H'-1}
		\end{equation}
		holds, where $H'\in (\frac12,1),\, K\in (0,2)$ and $H:=H'K\in (0, 1)$.
	\end{itemize}
\end{hyp}	
Clearly, both Example~\ref{subfBM} and Example~\ref{exmp7-1zili} satisfy Hypothesis~\ref{hyp1-1}.  An additional requirement $H'\in (\frac12,1)$ for both Example~\ref{subbi fBM} and Example~\ref{bi fBM} makes Hypothesis~\ref{hyp1-1} hold.

\begin{theorem}\label{th2-2}
	Assume that both the  Ornstein-Uhlenbeck model $\{Z_t:t \geq 0\}$ and the estimator $\hat{\theta}_n$ are given as in Theorem~\ref{th2}. 
	If the driving noise satisfies Hypothesis~\ref{hyp1-1} with $H\in(\frac12,\frac34)$, then there exists a positive constant $C_{\theta,H,h}$ independent of $n$ such that when $n$ is sufficiently large, 
	\begin{equation}\label{fou B-S bound-3}
		d_{Kol}\left(\sqrt{n}(\hat{\theta}_{n}-\theta), \mathcal{N}\right) \leq C_{\theta,H,h}\times    
		\left\{
		\begin{array}{ll}     
			\frac{1}{\sqrt{n}},& \quad \text{if }  H\in (\frac12,\,\frac58],\\
			\frac{1}{n^{3-4H}},& \quad \text{if }  H\in (\frac58,\,\frac34),
		\end{array}
		\right.
	\end{equation} where $\mathcal{N}$ is the same as in Theorem~\ref{th1}.   
\end{theorem}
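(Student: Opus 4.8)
The plan is to run the proof of Theorem~\ref{th1} almost verbatim, the genuinely new ingredient being a sharp comparison of the covariance of $Z$ with that of the fBm-driven Ornstein--Uhlenbeck process $X$ of Theorem~\ref{th1}; it is precisely there that the steep estimate \eqref{phi2} of Hypothesis~\ref{hyp1-1}, and not merely \eqref{1-1a1}, becomes indispensable once $H>\frac12$. First I would linearize. Put $V_n=\frac{1}{H\Gamma(2H)n}\sum_{j=1}^n Z_{jh}^2$ and $a=H\Gamma(2H)\theta^{-2H}$, so that $\hat\theta_n=V_n^{-1/(2H)}\to\theta$, and write $\frac1n\sum_{j=1}^n Z_{jh}^2-a=\frac1{\sqrt n}F_n+r_n$ with $F_n=\frac1{\sqrt n}\sum_{j=1}^n(Z_{jh}^2-\E Z_{jh}^2)$ and $r_n=\frac1n\sum_{j=1}^n(\E Z_{jh}^2-a)$. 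Since $\E Z_{jh}^2\to a$ exponentially fast (a by-product of the covariance estimates below), $r_n=O(n^{-1})$, and a second-order Taylor expansion of $v\mapsto v^{-1/(2H)}$ at $\theta^{-2H}$ gives
\[
\sqrt n(\hat\theta_n-\theta)=-\tfrac{\theta}{2Ha}F_n-\tfrac{\theta}{2Ha}\sqrt n\,r_n+R_n,\qquad R_n=\tfrac{\sqrt n}{2}g''(\xi_n)(V_n-\theta^{-2H})^2,\ \ g(v)=v^{-1/(2H)},
\]
with $\tfrac{\theta}{2Ha}=\tfrac{\theta^{2H+1}}{2H^2\Gamma(2H)}$, consistent with $\mathcal N\sim N(0,\sigma_1^2)$ once $F_n\Rightarrow N(0,\sigma_B^2)$. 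As $F_n$ lies in the second Wiener chaos with $\sup_n\E F_n^2<\infty$, hypercontractivity bounds all its moments uniformly and yields sub-exponential tails; together with $P(|V_n-\E V_n|>\varepsilon)\le C_p\varepsilon^{-2p}n^{-p}$ this makes both $R_n$ and $\sqrt n\,r_n=O(n^{-1/2})$ negligible, and the standard smoothing lemma (a perturbation of order $O(n^{-1/2})$, outside an event of super-polynomially small probability, of a variable whose comparison law $\mathcal N$ has bounded density) gives $d_{Kol}(\sqrt n(\hat\theta_n-\theta),\mathcal N)\le d_{Kol}(-\tfrac{\theta}{2Ha}F_n,\mathcal N)+Cn^{-1/2}$.

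Next, writing $Z_{jh}=I_1(\phi_{jh})$ in the Hilbert space $\mathfrak H$ attached to $R(s,t)=\E[G_sG_t]$, one has $F_n=I_2(f_n)$ with $f_n=\frac1{\sqrt n}\sum_{j=1}^n\phi_{jh}^{\otimes2}$, and the quantitative fourth-moment theorem on the second chaos gives
\[
d_{Kol}\bigl(-\tfrac{\theta}{2Ha}F_n,\mathcal N\bigr)\le C\Bigl(\bigl|\E F_n^2-\sigma_B^2\bigr|+\norm{f_n\otimes_1 f_n}_{\mathfrak H^{\otimes2}}\Bigr),
\]
where $\E F_n^2=\frac2n\sum_{i,j=1}^n\gamma_n(i,j)^2$, $\norm{f_n\otimes_1 f_n}^2=\frac1{n^2}\sum_{i,j,k,l=1}^n\gamma_n(i,j)\gamma_n(j,k)\gamma_n(k,l)\gamma_n(l,i)$ and $\gamma_n(i,j):=\E[Z_{ih}Z_{jh}]$. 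Everything is thus reduced to estimates on the covariances $\gamma_n(i,j)$.

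The heart of the proof is the covariance comparison. Let $\gamma_n^B(i,j)=\E[X_{ih}X_{jh}]$ and $\delta_n(i,j)=\gamma_n(i,j)-\gamma_n^B(i,j)$. Inserting the variation-of-constants formula $Z_t=G_t-\theta\int_0^t e^{-\theta(t-u)}G_u\,\dif u$ (and its analogue for $X$) into $\gamma_n$, the discrepancy $\delta_n(i,j)$ becomes the same quadruple-integral expression with $R$ replaced by $\Delta:=R-R^B$; integrating by parts twice turns it into an integral of $\partial_s\partial_t\Delta(s,t)$ against bounded exponential kernels, whereupon \eqref{phi2} -- which is exactly where the hypothesis $H'\in(\frac12,1)$ is used -- produces
\[
\abs{\delta_n(i,j)}\le C\,e^{-\theta(i\wedge j)h}\bigl((i\vee j)h\bigr)^{2H-2},\qquad \abs{\gamma_n(i,j)}\le C\,(1+\abs{i-j})^{2H-2},
\]
the second following from the fBm bound $\abs{\gamma_n^B(i,j)}\le C(1+\abs{i-j})^{2H-2}$ of Theorem~\ref{th1} together with $\gamma_n^B(i,j),\gamma_n(i,j)\to\rho_0(\abs{i-j}h)$ as $i\wedge j\to\infty$. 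The decisive feature is the exponential factor $e^{-\theta(i\wedge j)h}$: once the polynomial factors are absorbed, every sum in the previous display that carries a $\delta_n$-factor is bounded independently of $n$, so the contribution of $\delta_n$ to $\E F_n^2$ and to $\norm{f_n\otimes_1 f_n}^2$ is of strictly smaller order than that of $\gamma_n^B$.

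It then suffices to quote the estimates already obtained for Theorem~\ref{th1}. For $\frac12<H<\frac34$, the bound $\abs{\rho_0(kh)}\le C(1+\abs k)^{2H-2}$ gives $\bigl|\E(F_n^B)^2-\sigma_B^2\bigr|\le C\bigl(\sum_{\abs k\ge n}\rho_0(kh)^2+\frac1n\sum_{\abs k<n}\abs k\,\rho_0(kh)^2\bigr)=O(n^{4H-3})$, while a Young/H\"older estimate for $\norm{f_n^B\otimes_1 f_n^B}^2$ in terms of $\sum_k\abs{\rho_0(kh)}^{4/3}$ yields $\norm{f_n^B\otimes_1 f_n^B}=O(n^{-1/2})$ for $H\le\frac58$ (the series converges) and $O(n^{4H-3})$ for $\frac58<H<\frac34$ (the truncated series grows like $n^{(8H-5)/3}$). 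By the preceding paragraph the same two quantities computed with $\gamma_n$ in place of $\gamma_n^B$ differ from these only by lower-order terms, so $\bigl|\E F_n^2-\sigma_B^2\bigr|+\norm{f_n\otimes_1 f_n}$ is $O(n^{-1/2})$ for $H\in(\frac12,\frac58]$ and $O(n^{4H-3})$ for $H\in(\frac58,\frac34)$; substituting this into the two displays above yields \eqref{fou B-S bound-3}. I expect the covariance comparison to be the main obstacle: extracting the exponentially localized, polynomially decaying bound on $\delta_n(i,j)$ from \eqref{phi2} alone. Once this stability under a change of driving Gaussian noise is secured, the rest -- including the threshold at $H=\frac58$ and the breakdown of the central limit theorem at $H=\frac34$ -- is inherited from the behaviour of $\sum_k\abs{\rho_0(kh)}^{4/3}$ and $\sum_k\rho_0(kh)^2$ already analyzed in Theorem~\ref{th1}.
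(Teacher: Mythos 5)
Your overall architecture --- delta-method linearization, reduction to a fourth-moment bound for the second-chaos variable, reuse of the fBm estimates from Theorem~\ref{th1}, and a covariance comparison between $Z$ and the fBm-driven $X$ --- is the same as the paper's, and you correctly identify the covariance comparison as the crux. But the bound you propose there,
\begin{equation*}
\abs{\delta_n(i,j)}\le C\,e^{-\theta(i\wedge j)h}\bigl((i\vee j)h\bigr)^{2H-2},
\end{equation*}
is false. What is actually available (the paper's Proposition~\ref{pro7}) is $\abs{\tilde{\rho}(t,s)-\rho(t,s)}\le C\left(1\wedge s^{2(H-1)}\wedge(t-s)^{2(H-1)}\right)$ for $s\le t$: polynomial, not exponential, decay in the smaller time. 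To see that no exponential factor can be present, take $i=j$ and the sub-fractional Brownian motion: there $R-R^B=\frac12\left(s^{2H}+t^{2H}-(s+t)^{2H}\right)$, so $\partial_s\partial_t(R-R^B)=-H(2H-1)(s+t)^{2H-2}$, and integrating this against $e^{-\theta(t-u)}e^{-\theta(t-v)}$ over $[0,t]^2$ the mass concentrates at $u,v\approx t$, giving $\E[Z_t^2]-\E[X_t^2]\asymp -c\,t^{2H-2}$ --- no exponential smallness in $t=(i\wedge j)h$. (The exponential localization you may have in mind is the $X$-versus-stationary-$Y$ comparison of Theorem~\ref{th1}, which is a different mechanism.)

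This breaks both places where the exponential factor is used. First, the claim that ``every sum carrying a $\delta_n$-factor is bounded independently of $n$'' fails: with the correct bound, a sum over the smaller index contributes $\sum_{j\le n}(1+j)^{2H-2}\asymp n^{2H-1}$ since $2H-2\in(-1,-\frac12)$; one must instead exploit the minimum structure via $\min(a,b)\le\sqrt{ab}$, i.e.\ $\abs{\delta_n(i,j)}\le C\left((1+i\wedge j)(1+\abs{i-j})\right)^{H-1}$, and redo the case-by-case summations of the paper's Propositions~\ref{pro5}--\ref{pro6}; the $\delta_n$-contributions then come out $O(n^{4H-3})$ rather than $O(n^{-1})$ --- still compatible with \eqref{fou B-S bound-3}, but by a genuinely more delicate computation than the one you sketch. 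Second, your assertion that $\E Z_{jh}^2\to a$ exponentially fast, hence $r_n=O(n^{-1})$, also fails: one only gets $\E Z_{jh}^2-a=O((jh)^{2H-2})$, so the bias term $\sqrt n\,\abs{\E A_n-a}$ is of order $n^{2H-3/2}$ and its treatment is not the triviality you claim. The gap is therefore exactly at the step you yourself flagged as the main obstacle, and the proposed resolution does not work.
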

\begin{remark}
	The assumption of $H'\in (\frac12,1)$ in Hypothesis ($H_3'$) rules out $H'\in (0,\frac12]$, which is not an essential but only a technical  requirement.
\end{remark}

Based on Theorem~\ref{th2} and Theorem~\ref{th2-2}, we can finish the second aim of the present paper as follows.
\begin{corollary}
	Assume that the  Ornstein-Uhlenbeck model $\{Z_t:t \geq 0\}$ is driven by the sub-fractional Brownian motion, the bi-fractional Brownian motion, the sub-bifractional Brownian motion or the generalized fractional Brownian motion, and that the estimator $\hat{\theta}_n$ is given as in Theorem~\ref{th2}. If 
	an additional requirement $H'\in (\frac12,1)$ holds for both the bi-fractional Brownian motion and the sub-bifractional Brownian motion, then   there exists a positive constant $C_{\theta,H,h}$ independent of $n$ such that when $n$ is sufficiently large, 
	\begin{equation}\label{fou B-S bound-3-coro}
		d_{Kol}\left(\sqrt{n}(\hat{\theta}_{n}-\theta), \mathcal{N}\right) \leq C_{\theta,H,h}\times    
		\left\{
		\begin{array}{ll}     
			\frac{1}{\sqrt{n}},& \quad \text{if }  H\in (0,\,\frac58],\\
			\frac{1}{n^{3-4H}},& \quad \text{if }  H\in (\frac58,\,\frac34),
		\end{array}
		\right.
	\end{equation} where $\mathcal{N}$ is the same as in Theorem~\ref{th1}. 
\end{corollary}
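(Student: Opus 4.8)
The plan is to deduce the corollary from Theorem~\ref{th2} and Theorem~\ref{th2-2} by checking, for each of the four driving noises, the hypothesis that allows those theorems to be applied; the Hurst range $(0,\frac34)$ splits naturally into $(0,\frac12)$, the point $\frac12$, and $(\frac12,\frac34)$. On $(0,\frac12)$ one applies Theorem~\ref{th2}: for the sub-fBm, the bi-fBm, the sub-bifractional Brownian motion and the generalized fBm one computes the mixed derivative $\partial_s\partial_t\big(R(s,t)-R^B(s,t)\big)$ from the explicit kernels of Examples~\ref{subfBM}--\ref{exmp7-1zili} with $R^B(s,t)=\frac12(s^{2H}+t^{2H}-|s-t|^{2H})$, and verifies that it is $O\big((ts)^{H-1}\big)$; this, together with the elementary ($H_1$), ($H_2$), is Hypothesis~\ref{hyp1} and is done in \cite{CZ21,CLZ24}. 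Theorem~\ref{th2} then yields the bound $C_{\theta,H,h}n^{-1/2}$ on $(0,\frac12)$.

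On $(\frac12,\frac34)$ one applies Theorem~\ref{th2-2}, for which the steeper estimate \eqref{phi2} must be checked. Differentiating the kernels twice, the $(s+t)^{2H}$-type terms arising from the sub-/generalized corrections give a contribution $\propto(s+t)^{2H-2}$, the composite terms of the form $(s^{2H'}+t^{2H'})^{K}$ give a contribution $\propto(s^{2H'}+t^{2H'})^{K-2}(st)^{2H'-1}$, and the two $|s-t|^{2H}$ singular pieces (one from $R$, one from $R^B$) cancel exactly since $H=H'K$, leaving precisely the right-hand side of \eqref{phi2}. For the sub-fBm and the generalized fBm one has $H'=H\in(\frac12,1)$ automatically; for the bi-fBm and the sub-bifractional Brownian motion the extra assumption $H'\in(\frac12,1)$ is used to keep the exponents $2H'-1$ and $2H'K-1$ in the right regime, so that $\partial_tR-\partial_tR^B$ is absolutely continuous in $s$ and the bound has the stated form. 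Theorem~\ref{th2-2} then gives $C_{\theta,H,h}n^{-1/2}$ for $H\in(\frac12,\frac58]$ and $C_{\theta,H,h}n^{4H-3}$ for $H\in(\frac58,\frac34)$.

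It remains to treat $H=\frac12$. Substituting $H=\frac12$ into the covariance functions, the sub-fBm and the generalized fBm both collapse to $R(s,t)=\min(s,t)$, i.e. $Z$ is the classical Ornstein--Uhlenbeck process, whose stationary correlation decays geometrically and is therefore summable; the $n^{-1/2}$ Kolmogorov bound then follows from the quantitative fourth moment theorem exactly as in the proof of Theorem~\ref{th1} on $(0,\frac58]$. For the bi-fBm and the sub-bifractional Brownian motion with $H'\in(\frac12,1)$ the process at $H=\frac12$ is still genuinely non-Markovian; here one obtains the bound either by running the argument of Theorem~\ref{th2-2} directly (the inner-product and fourth-cumulant estimates for $Z$ require only ($H_1$) and a bound of the form \eqref{phi2}, which still holds, while the diagonal term of $\partial_tR^B$ is simply absent at $H=\frac12$), or by a limiting argument using that the constant in \eqref{fou B-S bound-3} stays bounded as $H\downarrow\frac12$.

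Assembling the three steps gives the $n^{-1/2}$ rate on $(0,\frac58]$ and the $n^{4H-3}$ rate on $(\frac58,\frac34)$, which is \eqref{fou B-S bound-3-coro}. Since Theorems~\ref{th2} and \ref{th2-2} are already available, the only real work is the covariance bookkeeping of the second step---especially differentiating the composite kernel $(s^{2H'}+t^{2H'})^{K}$ of the sub-bifractional Brownian motion twice and showing the remainder after subtracting $R^B$ is dominated by the two terms on the right of \eqref{phi2}---and making sure the boundary value $H=\frac12$ is not lost; this last edge case is the main point to be careful about.
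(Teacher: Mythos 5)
Your proposal follows the same route as the paper: the corollary is obtained by combining Theorem~\ref{th2} (for $H<\frac12$) with Theorem~\ref{th2-2} (for $H>\frac12$), the only content being the verification that the four noises satisfy Hypothesis~\ref{hyp1}, respectively Hypothesis~\ref{hyp1-1}, which the paper does not redo but delegates entirely to the cited references. Two remarks on your extra detail. First, your claim that for the bi-fractional Brownian motion the two $|t-s|^{2H}$ pieces ``cancel exactly'' is not literally true: in Example~\ref{bi fBM} that term carries the coefficient $2^{-K}$ while $R^B$ carries $\frac12$, so for $K\neq 1$ a nonzero multiple of $|t-s|^{2H-2}$ would survive in the mixed derivative and would \emph{not} be dominated by the right-hand side of \eqref{phi2}; the cited verification works only after the standard normalization of the bi-fBm (rescaling by $2^{(K-1)/2}$, equivalently absorbing the factor into $\sigma$), a point worth stating explicitly. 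Second, you are right that $H=\frac12$ is not covered by either theorem (both hypotheses exclude $H=\frac12$, and the theorems are stated on the open intervals $(0,\frac12)$ and $(\frac12,\frac34)$), so the corollary as written has a gap at that point which the paper silently ignores; your observation that sub-fBm and the generalized fBm reduce to standard Brownian motion there is correct and gives an honest fix for those two cases, though your proposed ``limiting argument'' for the bi-fBm and sub-bifractional cases at $H'K=\frac12$ would need the constants in \eqref{fou B-S bound-3} to be uniform in $H$ near $\frac12$, which is asserted rather than proved. Neither issue affects the main content, but both go beyond what the paper itself justifies.
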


The paper is organized as follows. In Section~\ref{sec2}, we recall some known results of stochastic  analysis. Proof of Theorem~\ref{th1} is given in  Section~\ref{sec.1.3}. Proof of Theorem~\ref{th2} and Theorem~\ref{th2-2} are given in  Section~\ref{sec3}. To make the paper more readable, we delay some technical calculations in Appendix.

\section{Preliminaries} \label{sec2}
\setcounter{section}{2} \setcounter{equation}{0}

This section provides a concise overview of foundational  elements about Gaussian stochastic  analysis and the Berry-Ess\'een type upper bound quantifying the distance of two normal random variables. Given a complete probability space $(\Omega, \mathcal{F}, P)$, we denote by  $\{G_t: t\in[0,T ]\}$  a continuous centered Gaussian process on this space with covariance function
$$\mathbb{E}(G_tG_s)=R(t,s), \quad s,t\in[0,T]. $$
Let $\mathfrak{H}$ be the associated reproducing kernel Hilbert space of the Gaussian process $G$, which is defined as the closure of the space of all real-valued step functions on $[0, T]$, equipped with the inner product
\begin{align*}
	\langle \mathbbm{1}_{[a,b]},\,\mathbbm{1}_{[c,d]}\rangle_{\FH}=\E\left(( G_b-G_a) ( G_d-G_c) \right),
\end{align*}
for any $0 \leq a < b \leq T$ and $0 \leq c < d \leq T$.
Denote $\{G(h) :h\in\mathfrak{H}\}$ by the  isonormal Gaussian process on the above probability space $(\Omega, \mathcal{F}, P)$ with following representation
\begin{align}\label{isoG}
	G(h)=\int_{[0,T]}h(t)\dif G_t, \ \forall \ h \in \mathfrak{H},
\end{align}
which is indexed by the elements in the Hilbert space $\mathfrak{H}$ and satisfies It\^{o}'s isometry:
\begin{align}\label{G isometry}
	\mathbb{E}\left[G(g)G(h)\right] = \langle g, h \rangle_{\mathfrak{H}}, \quad
	\forall \ g, h \in \mathfrak{H}.
\end{align}

The key point lies in establishing  the explicit formulas for the inner product in the Hilbert space $\mathfrak{H}$, which  follow the idea of \cite{CLSG,CZ21}. To elaborate  it, we first define the covariance function of  fBm $B^H$ by $R^B(s,t)=\E[B^H_sB^H_t]$, and subsequently denote the associated canonical Hilbert space by $\mathfrak{H}_1$ throughout the paper. When $H\in(\frac12,1)$ or the Lebesgue measure of intersection of the supports about two function $f, g\in\mathfrak{H}$ is  zero,  Mishura \cite{Mishura} provides that
$$\langle g,h\rangle_{\mathfrak{H}_1}
=H(2H-1)\int_{\mathbb{R}^2}g(u)h(v)|u-v|^{2H-2}dudv.$$
Suppose that $\mathcal{V}_{[0,T]}$ is the set of functions of bounded variation in $[0,T]$, and by $\mathcal{B}([0,T ])$ the Borel $\sigma$-algebra on $[0,T]$. When $H \in (0, \frac{1}{2})$, for any two functions in the set $\mathcal{V}_{[0,T]}$,  Chen et al. \cite{Alazemi2024,CLZ24} propose  a new inner product in the Hilbert space $\mathfrak{H}_1$ as following,
\begin{align}
	\langle f,\,g \rangle_{\FH_1}
	&=H \int_{[0,T]^2}  f(t)  \abs{t-s}^{2H-1}\sgn(t-s) \dif t \nu_{g}(\dif s),\quad  \forall f,\, g\in \mathcal{V}_{[0,T]}, \label{innp fg3-0}
\end{align}   
where $\nu_{g}(\dif s):=\dif \nu_{g}(s)$, and $\nu_g$ is the restriction on $\left({[0,T]},\mathcal{B}({[0,T]})\right)$ of the signed \textnormal{Lebesgue-Stieljes} measure $\mu_{g^0}$ on $\left(\Rnum,\mathcal{B}(\Rnum)\right)$, where $g^0(x)$ is defined by 
\begin{equation*}
	g^0(x)=\left\{
	\begin{array}{ll}
		g(x), & \quad \text{if}~x\in [0,T],\\
		0, &\quad \text{otherwise}.
	\end{array}
	\right.
\end{equation*}
Furthermore, if $  g'(\cdot) $ is interpreted as the distributional derivative of $g(\cdot)$, the formula \eqref{innp fg3-0} admits the following representation:
\begin{align} 
	\langle f,\,g \rangle_{\FH_1}
	&=H \int_{[0,T]^2}  f(t) g'(s) \abs{t-s}^{2H-1}\sgn(t-s) \dif t  \dif s,\quad  \forall f,\, g\in \mathcal{V}_{[0,T]}. \label{innp fg3-00}
\end{align}

Next, for the general Gaussian process $G$ and the associated reproducing kernel Hilbert space $\mathfrak{H}$, if any two functions $ f,\, g\in \mathcal{V}_{[0,T]}$, Jolis \cite{Jolis 2007}  gives a  inner product formula in Theorem 2.3 as following,
\begin{align}\label{jolis 00}
	\langle f,\,g \rangle_{\FH}=\int_{[0,T]^2}  R(s,t) \dif \big(\nu_{f}\times \nu_{g}\big)(s, t),
\end{align}
where $\nu_{g}$ is same as in equation \eqref{innp fg3-0}.
Then, under  Hypotheses $(H_1)$-$(H_2)$,  the relationship between the inner products of two functions in the Hilbert spaces $\mathfrak{H}$ and $\mathfrak{H}_1$ satisfies that
\begin{align} \label{innp fg3-zhicheng0-0}
	\langle f,\,g \rangle_{\FH}-\langle f,\,g \rangle_{\FH_1}=\int_0^T f(t)\dif t \int_0^T g(s) \frac{\partial}{\partial s}\left( \frac{\partial R(s,t)}{\partial t} -  \frac{\partial R^B(s,t)}{\partial t} \right) \dif s.
\end{align}
Moreover, when the intersection of these two functions' supports is of Lebesgue measure zero, we have
\begin{align} \label{innp fg3-zhicheng0}
	\langle f,\,g \rangle_{\FH}=\int_{[0,T]^2}  f(t)g(s) \frac{\partial^2 R(t,s)}{\partial t\partial s} \dif t   \dif s. 
\end{align} 

Finally, we introduce the Berry–Ess\'een bounds (so-called ``Stein's method'') estimating the distance between two probability distributions.  Recall that the Kolmogorov distance between two random variables $\xi,\eta$ as
$$d_{Kol}(\xi, \eta):=\sup_{z\in \Rnum}\abs{P\left(\xi\le z\right)-P\left(\eta\le z\right)}.$$	
Let the function \begin{equation}\label{yfx}
	y=f(x)=\left(\frac{1}{{H} \Gamma(2H ) } x \right)^{-\frac{1}{2H}}.	
\end{equation} Its inversion function is 
\begin{equation}\label{finversion}
	x:=g(y)=f^{-1}(y)=H\Gamma(2H)y^{-2H}.  
\end{equation}       
If $X\geq 0$ almost surely, the following lemma provides an estimate of the Kolmogorov distance between
the random variable $ f(X)$ and one normal random variable by means of that between the random variable $X$ and another normal random variable (see \cite{CLZ24,CZ21,SV 18}.)
\begin{lemma}\label{lem2}
	Let $T$ be any positive real number and $\xi\sim N(0,\sigma^2_1)$ and $\eta\sim N(0, \sigma^2_2)$ and the two functions $f$ and $g$ given by \eqref{yfx} and \eqref{finversion}, respectively. 
	If a random variable $X\ge 0$ almost surely, then
	there exists a positive constant $C$ independent of $T$ such that 
	\begin{equation}\label{mubiao}
			d_{Kol}(\sqrt{T}(f(X)-\theta) ,\, \xi) \le  C\times \left(d_{Kol}(\sqrt{T}(X-\E[X]), \eta) +\sqrt{T}\abs{  \E[X] - g(\theta)}+\frac{1}{\sqrt{T}}\right),
		\end{equation}
	where the two variance $\sigma^2_2,\,\sigma^2_1$ satisfy the following relation: \begin{equation}\label{fcha relat}
		\sigma^2_2= \big(g'(\theta)\big)^2\times{\sigma^2_1} . 
	\end{equation} 
\end{lemma}
The relation \eqref{fcha relat} comes from the delta method, please refer to chapter 3 of \cite{vand}. We point that in the previous literature \cite{CLZ24,CZ21}, the random variable $X$ is taken as 
\begin{equation*}
	\frac{1}{ T} \int_0^T  Z_{t}^2 \dif t ,
\end{equation*}however, in the present paper,  we take $T=n$ and take the random variable $X$ as\begin{equation}\label{X hanyi}
	\frac1n\sum_{j=1}^n X_{jh}^2;  \text{ and }\,\,  \frac{1}{  n} \sum_{j=1}^n  Z_{jh}^2, 
\end{equation} where $\{X_t:t \geq 0\}, \{Z_t:t \geq 0\}$ are the Ornstein-Uhlenbeck model defined as in \eqref{fou} and \eqref{ou2}, respectively.


\section{Proof of Theorem~\ref{th1}}\label{sec.1.3}

In this section, the Ornstein-Uhlenbeck model is defined as in \eqref{fou}. 	By Lemma~\ref{lem2}, we need to study the property of the second moment of sample path for the fractional Ornstein-Uhlenbeck process defined as in \eqref{X hanyi}. It is convenient to introduce a new notation and rewrite it as follows:
\begin{equation}\label{B_n}
	B_n:=\frac1n\sum_{j=1}^nX_{jh}^2.
\end{equation}

Next, we will elaborate the limit of $\mathbb{E}(B_n)$ as $n$ large enough and its convergence rate.
\begin{proposition}\label{pro1}
	Let $H\in(0,1)$ and $B_n$ be defined as in \eqref{B_n}. When $n$  large enough, the exist a constant $C$ independent of $n$ such that
	\begin{equation}\label{1}
		\abs{\mathbb{E}(B_n)-a} \leq C \times \frac1{n},
	\end{equation}
	where the constant $a=g(\theta)=H\Gamma(2H)\theta^{-2H}$.
\end{proposition}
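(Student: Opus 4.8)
The plan is to compute $\E(B_n) = \frac1n\sum_{j=1}^n \E[X_{jh}^2]$ explicitly using the known variance formula for the fractional Ornstein-Uhlenbeck process, and then show that the arithmetic mean of $\E[X_{jh}^2]$ over $j=1,\dots,n$ converges to the stationary variance $a = H\Gamma(2H)\theta^{-2H}$ at rate $1/n$. Recall that the solution of \eqref{fou} with $X_0=0$ is $X_t = \int_0^t e^{-\theta(t-u)}\dif B_u^H$, so that $\E[X_t^2] = \norm{e^{-\theta(t-\cdot)}\1_{[0,t]}}_{\FH_1}^2$. A standard computation (see \cite{Hu2019,CZ21}) gives $\E[X_t^2] = a - r(t)$ where $r(t)$ is an explicit remainder term satisfying $|r(t)| \le C e^{-\theta t}$ (more precisely $r(t)$ decays exponentially, possibly with a polynomial prefactor in $t$, which is still dominated by $C' e^{-\theta' t}$ for any $\theta' < \theta$).

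First I would record this decomposition $\E[X_{jh}^2] = a - r(jh)$ with $|r(jh)| \le C e^{-\theta j h}$. Then
\begin{align*}
	\abs{\E(B_n) - a} = \abs{\frac1n\sum_{j=1}^n \big(\E[X_{jh}^2] - a\big)} = \frac1n\abs{\sum_{j=1}^n r(jh)} \le \frac1n\sum_{j=1}^n C e^{-\theta j h} \le \frac{C}{n}\cdot\frac{e^{-\theta h}}{1-e^{-\theta h}}.
\end{align*}
Setting $C' = C e^{-\theta h}/(1-e^{-\theta h})$, which depends only on $\theta, H, h$ and not on $n$, gives $\abs{\E(B_n)-a} \le C'/n$, as claimed. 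The geometric series is summable precisely because $h>0$ is fixed and $\theta>0$, which is exactly the regime of the theorem; this is where the fixed-step-size assumption enters cleanly.

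The main point to get right is the decomposition $\E[X_t^2] = a - r(t)$ with the exponential bound on $r(t)$. One way to obtain it: write $X_t = \int_0^t e^{-\theta(t-u)}\dif B_u^H$ and extend the integrand to the stationary solution $\tilde X_t = \int_{-\infty}^t e^{-\theta(t-u)}\dif B_u^H$, whose variance is the constant $a$; the difference $X_t - \tilde X_t = -\int_{-\infty}^0 e^{-\theta(t-u)}\dif B_u^H = -e^{-\theta t}\tilde X_0$ has variance $e^{-2\theta t} a$, and a cross-term bound via Cauchy-Schwarz then controls $|\E[X_t^2] - a| = |\E[\tilde X_t^2] - \E[X_t^2]|$ by $C e^{-\theta t}$. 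Alternatively, one computes $\E[X_t^2] = H(2H-1)\int_0^t\int_0^t e^{-\theta(2t-u-v)}|u-v|^{2H-2}\dif u\dif v$ (for $H>\tfrac12$, with the analogous formula from \eqref{innp fg3-0} for $H<\tfrac12$), substitutes and splits the double integral, and identifies the leading constant $a$ together with exponentially small tails. Either way the argument is routine given the representation of $X_t$; I do not expect a genuine obstacle, only bookkeeping of the remainder. Finally, since the same variance formula and decomposition hold verbatim with $H$ ranging over all of $(0,1)$ (using \eqref{innp fg3-0} or \eqref{innp fg3-00} when $H<\tfrac12$), the bound \eqref{1} holds for $H\in(0,1)$ as stated.
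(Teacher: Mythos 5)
Your proposal is correct and follows essentially the same route as the paper: introduce the stationary solution $Y_t=\int_{-\infty}^t e^{-\theta(t-u)}\dif B^H_u$ with $\E[Y_t^2]=a$, use $X_t=Y_t-e^{-\theta t}Y_0$ and Cauchy--Schwarz to get $|\E[X_t^2]-a|\le Ce^{-\theta t}$ (the paper gets the clean constant $3a$), and then sum the geometric series over $j$, which is exactly where the fixed step size $h>0$ is used.
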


\begin{proof}
	Through  standard computations, the  fractional Ornstein-Uhlenbeck processes $X_t$, known as the solution of \eqref{fou}, admits the explicit representation:
	\begin{equation}\label{X_t}
		X_t=\int_0^t e^{-\theta(t-s)} \dif  B_s^H,\quad   t\geq 0.
	\end{equation}
	Furthermore, Let $\{Y_t,t\in\mathbb{R}\}$ represent the stationary solution  of   fractional Ornstein-Uhlenbeck processes,  expressed as 
	\begin{equation}\label{Y_t}
		Y_t=\int_{-\infty}^t e^{-\theta(t-s)} \dif  B_s^H,\quad   t\in\mathbb{R}.
	\end{equation}
	The stationary property of $Y_t$ ensures that
	\begin{equation}\label{2}
		\mathbb{E}\left(Y_t^2\right)=\mathbb{E}\left(Y_0^2\right)=a,
	\end{equation}
	where the last equality is from  Lemma 19 in Hu et al. \cite{Hu2019}. Consequently, by the definition of $B_n$ we have
	\begin{equation}\label{3}
		\abs{\mathbb{E}(B_n)-a}=\abs{\frac1n \sum_{j=1}^n\left(\mathbb{E}\left(X_{jh}^2\right)-\mathbb{E}\left(Y_{jh}^2\right)\right)} \leq \frac1{n}\sum_{j=1}^n\left(\mathbb{E}\abs{X_{jh}^2-Y_{jh}^2}\right).
	\end{equation}
	Crucially, $X_t$ and   $Y_t$ satisfy the relationship
	\begin{equation}\label{4}
		X_t=Y_t-e^{-\theta t}Y_0, \quad \forall t\geq 0.
	\end{equation}
	Then the  Cauchy-Schwarz inequality and triangle inequality yield
	\begin{equation}\label{5}
		\abs{\mathbb{E}\left(X_t^2-Y_t^2\right)}=e^{-\theta t}\abs{\mathbb{E}\left(Y_0\left(e^{-\theta t}Y_0-2Y_t\right)\right)}\leq 3ae^{-\theta t}.
	\end{equation}
	Substituting this result into \eqref{3}, we obtain the desired result.
\end{proof}

\subsection{\bf The second moment and cumulants of the random variable $W_n$}\label{sec.sub.3.1}

To establish Theorem \ref{th1}, it is necessary to derive the Berry-Ess\'{e}en bound for the random variable $W_n$ based on the idea of \cite{CLZ24,CZ21,Khalifa,SV 18}, which is a second Wiener chaos with respect to the fBm $B_t^H$ with the form 
\begin{equation}\label{W_n}
	W_n:=\sqrt{n}\left(B_n-\mathbb{E}(B_n)\right)=\frac1{\sqrt{n}}\sum_{j=1}^n\left(X_{jh}^2-\mathbb{E}\left(X_{jh}^2\right)\right).
\end{equation}
Guided by the optimal fourth moment theorem, our  analysis focuses on: 1. Estimating the limit and convergence rate of the second moment of $W_n$; 2. Establishing upper bounds for its third and fourth cumulants. These objectives are expounded in the  following two propositions. 

\begin{proposition}\label{pro2}
	Let $H\in(0,\frac34)$ and $W_n$ be defined as in \eqref{W_n}. When $n$ is large enough, the exist a constant $C$ independent of $n$ such that
	\begin{equation}\label{7}
		\begin{split}
			\abs{\mathbb{E}(W_n^2)- \sigma_B^2} \leq C \times \left\{
			\begin{array}{ll}     
				\frac{1}{n},& \quad \text{if }  H\in (0,\,\frac12],\\
				\frac{1}{n^{3-4H}},& \quad \text{if }  H\in (\frac12,\,\frac34),
			\end{array}
			\right.
		\end{split}
	\end{equation}
	where  $\sigma_B^2$ is a series given by
	\begin{equation}\label{sigma_B^2}
		\begin{split}
			\sigma_B^2=2\sum_{j=-\infty}^{+\infty}\rho_0^2(jh)<+\infty.
		\end{split}
	\end{equation}
\end{proposition}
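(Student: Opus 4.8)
The plan is to convert $\mathbb{E}(W_n^2)$ into an explicit double sum, replace $X_{jh}$ by the stationary solution $Y_{jh}$ from the proof of Proposition~\ref{pro1}, and then compare the resulting sum term by term with the series $\sigma_B^2$ in \eqref{sigma_B^2}. Since $X_{jh}$ lies in the first Wiener chaos of $B^H$, the Gaussian product (Isserlis) formula gives $\mathbb{E}\big[(X_{jh}^2-\mathbb{E}X_{jh}^2)(X_{kh}^2-\mathbb{E}X_{kh}^2)\big]=2\big(\mathbb{E}[X_{jh}X_{kh}]\big)^2$, so from \eqref{W_n},
\begin{equation*}
	\mathbb{E}(W_n^2)=\frac{2}{n}\sum_{j,k=1}^{n}\big(\mathbb{E}[X_{jh}X_{kh}]\big)^{2}.
\end{equation*}
Using the identity \eqref{4}, I would expand $\mathbb{E}[X_{jh}X_{kh}]=\rho_0\big((j-k)h\big)+\varepsilon_{jk}$, where $\rho_0(\tau):=\mathbb{E}[Y_0Y_\tau]$ is the stationary autocovariance and the remainder $\varepsilon_{jk}=-e^{-\theta kh}\rho_0(jh)-e^{-\theta jh}\rho_0(kh)+e^{-\theta(j+k)h}\rho_0(0)$ obeys $|\varepsilon_{jk}|\le C\big(e^{-\theta jh}+e^{-\theta kh}\big)$ because $|\rho_0|\le\rho_0(0)$. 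Substituting this into the double sum and squaring splits $\mathbb{E}(W_n^2)$ into a main term $\frac{2}{n}\sum_{j,k}\rho_0^2((j-k)h)$ and error terms $\frac{2}{n}\sum_{j,k}\big(2\rho_0((j-k)h)\varepsilon_{jk}+\varepsilon_{jk}^{2}\big)$.

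For the main term, the change of index $m=j-k$ turns it into $2\sum_{|m|<n}\big(1-\frac{|m|}{n}\big)\rho_0^2(mh)=2\sum_{|m|<n}\rho_0^2(mh)-\frac{2}{n}\sum_{|m|<n}|m|\rho_0^2(mh)$, so the first piece differs from $\sigma_B^2=2\sum_{m\in\mathbb{Z}}\rho_0^2(mh)$ only by the tail $2\sum_{|m|\ge n}\rho_0^2(mh)$. All these remainders are estimated through the classical decay bound for the stationary fractional Ornstein--Uhlenbeck covariance, $|\rho_0(\tau)|\le C_\theta(1+|\tau|)^{2H-2}$ for $H\neq\frac12$ and exponential decay for $H=\frac12$ (see Hu et al.~\cite{Hu2019}). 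Hence $\rho_0^2(mh)\le C(1+|m|)^{4H-4}$, which is summable precisely because $H<\frac34$ (this is also how $\sigma_B^2<\infty$ is obtained); the tail is $O(n^{4H-3})$, and $\frac1n\sum_{|m|<n}|m|\rho_0^2(mh)$ is $O(n^{-1})$ when $H<\frac12$ and $O(n^{4H-3})$ when $H\in(\frac12,\frac34)$.

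For the $\varepsilon$-terms, every summand carries at least one factor $e^{-\theta jh}$ or $e^{-\theta kh}$, so after bounding $|\rho_0((j-k)h)|\le\rho_0(0)$ the double sum factors into a bounded geometric series times a sum of the form $\sum_{j\le n}|\rho_0(jh)|$ or $\sum_{j\le n}\rho_0^2(jh)$; using $\sum_j|\rho_0(jh)|<\infty$ for $H\le\frac12$ and $\sum_{j\le n}|\rho_0(jh)|=O(n^{2H-1})$ for $H\in(\frac12,\frac34)$, these terms contribute $O(n^{-1})$, respectively $O(n^{2H-2})$, and $n^{2H-2}\le n^{4H-3}$ exactly when $H\ge\frac12$. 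Collecting the main-term tail, the edge correction, and the $\varepsilon$-bounds yields \eqref{7}. The chief obstacle is the bookkeeping of exponents near $H=\frac34$: one must isolate the genuinely slowest-decaying pieces (the tail and the edge correction, both of order $n^{4H-3}$ on $(\frac12,\frac34)$) and must handle the borderline value $H=\frac12$ via the exponential decay of $\rho_0$ rather than the bound $(1+|\tau|)^{2H-2}$, which at $H=\frac12$ would only yield a spurious logarithmic factor.
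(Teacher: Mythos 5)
Your proposal is correct and follows essentially the same route as the paper: the same product-formula identity $\mathbb{E}(W_n^2)=\frac{2}{n}\sum_{j,k}\rho^2(jh,kh)$, the same split into the stationary double sum (handled via the change of index $m=j-k$, a tail estimate of order $n^{4H-3}$, and an edge correction) plus exponentially weighted remainders coming from $X_t=Y_t-e^{-\theta t}Y_0$, all controlled by the Cheridito-type decay $|\rho_0(\tau)|\le C(1+|\tau|)^{2H-2}$; the paper merely organizes the comparison as $|\rho^2-\rho_0^2|\le C|\rho-\rho_0|$ rather than expanding $(\rho_0+\varepsilon)^2$. Your explicit caveat about the borderline case $H=\tfrac12$ (using exponential decay of $\rho_0$ there) is in fact slightly more careful than the paper, whose displayed polynomial estimates would pick up a spurious $\log n$ at $H=\tfrac12$ if taken literally.
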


\begin{proof}
	Firstly, we derive the convergence of above series $\sigma_B^2$ and denote 
	\begin{equation}\label{8-1}
		\begin{split}
			\rho(t,s)=\mathbb{E}(X_tX_s), \quad \rho_0(t,s)=\mathbb{E}(Y_tY_s)
		\end{split}
	\end{equation}
	by the covariance function of fractional Ornstein-Uhlenbeck processes $X_t$ and that of stationary process $Y_t$. Moreover, due to the stationary property of $Y_t$, we can write it's  covariance function as 
	\begin{equation}\label{8-2}
		\begin{split}
			\rho_0(|t-s|)=\rho_0(t,s)=\mathbb{E}(Y_tY_s), \quad \forall s,t\in\mathbb{R}.
		\end{split}
	\end{equation}
	Specially, $\rho_0(t)=\mathbb{E}(Y_tY_0)$.  From  Theorem 2.3 of  Cheridito et al.  \cite{Cheridito}, we know that when $t$ is large enough,
	\begin{equation}\label{8-3}
		\begin{split}
			\abs{\rho_0(t)} = O(|t|^{2H-2}),
		\end{split}
	\end{equation}
	which implies that the series $\sigma_B^2$ converges, i.e., \begin{equation}\label{8-4}
		\begin{split}
			\sigma_B^2=2\sum_{j=-\infty}^{+\infty}\rho_0^2(jh)<+\infty
		\end{split}  
	\end{equation}if and only if $0<H<\frac34$, please also refer to
	Lemma 6.3 of Nourdin \cite{Nourdin2012}.
	
	Secondly, according to the product formula of Wiener-It\^{o} multiple integrals,  the second moment of second Wiener chaos $W_n$ can be rewritten as
	\begin{equation}\label{8}
		\begin{split}
			\mathbb{E}(W_n^2)=\frac2n\sum_{j,l=1}^n\rho^2(jh,lh).
		\end{split}
	\end{equation}
	Then, the triangle inequlity implies that
	\begin{equation}\label{9}
		\begin{split}
			\abs{\mathbb{E}(W_n^2)- \sigma_B^2} &\leq \frac2n \sum_{j,l=1}^n \abs{\rho^2(jh,lh)-\rho_0^2(jh,lh)} + \abs{\frac2n \sum_{j,l=1}^n \rho_0^2(\abs{j-l}h)-\sigma_B^2}\\
			&:=S_1+S_2.
		\end{split}
	\end{equation}
	For the term $S_1$, the fact $\mathbb{E}\left(Y_t^2\right)=\mathbb{E}\left(Y_0^2\right)=a$ and $\sup_{t\geq0}\mathbb{E}\left(X_t^2\right)<\infty$ (see Theorem 3.1 of Balde et al. \cite{BBES 23} ) and Cauchy-Schwarz inequality imply that
	\begin{equation}\label{10}
		\begin{split}
			\abs{\rho^2(jh,lh)-\rho_0^2(jh,lh)} &= \abs{\left(\rho(jh,lh)+\rho_0(jh,lh)\right) \cdot \left(\rho(jh,lh)-\rho_0(jh,lh)\right)} \\
			&\leq C\abs{\rho(jh,lh)-\rho_0(jh,lh)}
		\end{split}
	\end{equation}
	Combining  the relationship  \eqref{4} and a well-known fact (see Theorem 2.3 of  Cheridito et al.  \cite{Cheridito}) as following
	\begin{equation}\label{11-1}
		\begin{split}
			\abs{\rho_0(t-s)} \leq  C(1+|t-s|)^{2H-2},
		\end{split}
	\end{equation}
	we have
	\begin{equation}\label{11}
		\begin{split}
			\abs{\rho(t,s)-\rho_0(t,s)} &=\abs{\mathbb{E}\left[\left(Y_t-e^{-\theta t}Y_0\right)\left(Y_s-e^{-\theta s}Y_0\right)\right]-\mathbb{E}\left(Y_tY_s\right)}\\
			&=\abs{e^{-\theta (t+s)}\mathbb{E}\left(Y_0^2\right)-e^{-\theta t}\mathbb{E}\left(Y_sY_0\right)-e^{-\theta s}\mathbb{E}\left(Y_tY_0\right)}\\
			&\leq C\left[e^{-\theta (t+s)}+e^{-\theta t}\left(1+\abs{s}\right)^{2H-2}+e^{-\theta s}\left(1+\abs{t}\right)^{2H-2}\right].
		\end{split}
	\end{equation}
	Substituting this estimation into \eqref{10} yields
	\begin{equation}\label{12}
		\begin{split}
			\abs{\rho^2(jh,lh)-\rho_0^2(jh,lh)} \leq C\left[e^{-\theta h (l+j)}+e^{-\theta jh}\left(1+l\right)^{2H-2}+e^{-\theta lh}\left(1+j\right)^{2H-2}\right].
		\end{split}
	\end{equation}
	Consequently, we obtain
	\begin{equation}\label{13}
		\begin{split}
			S_1 &\leq \frac{C}n\left[\sum_{j,l=1}^ne^{-\theta h (l+j)}+\sum_{j,l=1}^ne^{-\theta jh}\left(1+l\right)^{2H-2}\right]\\
			&\leq \frac{C}n\left[\int_1^{\infty}\int_1^{\infty}e^{-\theta h (x+y)} \dif x \dif y + \int_1^{\infty}e^{-\theta h x} \dif x \int_1^{n} y^{2H-2} \dif y \right]\\
			&\leq \frac{C}n \left[1+n^{(2H-1)\vee 0} \right] \leq C\times \left\{
			\begin{array}{ll}     
				\frac{1}{n},& \quad \text{if }  H\in (0,\,\frac12],\\
				\frac{1}{n^{2(1-H)}},& \quad \text{if }  H\in (\frac12,\,1).
			\end{array}
			\right.
		\end{split}
	\end{equation}
	For the term $S_2$, we firstly have known that if and only if $0<H<\frac34$, 
	\begin{equation}\label{14}
		\begin{split}
			\sigma_B^2=2\sum_{j=-\infty}^{+\infty}\rho_0^2(jh)<\infty.
		\end{split}
	\end{equation}
	And then, making the change of variable $k=j-l$ yields
	\begin{equation}\label{15}
		\begin{split}
			\frac1n \sum_{j,l=1}^n \rho_0^2(\abs{j-l}h) = \sum_{k=1-n}^{n-1} \rho_0^2(\abs{k}h)(1-\frac{|k|}{n}) = \sum_{k=1-n}^{n-1} \rho_0^2(\abs{k}h) - \frac1n \sum_{k=1-n}^{n-1} \rho_0^2(\abs{k}h)|k|.
		\end{split}
	\end{equation}
	Therefore, we can scale $S_2$ as following
	\begin{equation}\label{16}
		\begin{split}
			S_2= \abs{\frac2n \sum_{j,l=1}^n \rho_0^2(\abs{j-l}h)-\sigma_B^2} \leq 2\sum_{|k|\geq n}^{\infty} \rho_0^2(\abs{k}h) + \frac4n \sum_{k=1}^{n} k \rho_0^2(kh).
		\end{split}
	\end{equation}
	Since the inequality \eqref{11-1} implies $\abs{\rho_0(kh)} \leq  C(1+k)^{2H-2}$, then for $0<H<\frac34$ we have 
	\begin{equation}\label{17}
		\begin{split}
			\sum_{|k|\geq n}^{\infty} \rho_0^2(\abs{k}h) \leq  C \sum_{k=n+1}^{\infty} (1+|k|)^{2(2H-2)} \leq C \int_{n}^{\infty} x^{2(2H-2)} \dif x \leq Cn^{4H-3},
		\end{split}
	\end{equation}
	\begin{equation}\label{18}
		\begin{split}
			\frac4n \sum_{k=1}^{n} k \rho_0^2(kh) &\leq \frac{C}{n} \sum_{k=1}^{n} (1+k)^{2(2H-2)+1} \leq \frac{C}{n} \int_{n}^{\infty} x^{2(2H-2)+1} \dif x \\
			&\leq Cn^{(4H-2) \vee 0 -1} = C \times \left\{
			\begin{array}{ll}     
				\frac{1}{n},& \quad \text{if }  H\in (0,\,\frac12],\\
				\frac{1}{n^{1-2H}},& \quad \text{if }  H\in (\frac12,\,\frac34).
			\end{array}
			\right. 
		\end{split}
	\end{equation}
	As a result, we get the estimation of $S_2$ as
	\begin{equation}\label{19}
		\begin{split}
			S_2 \leq C \times \left\{
			\begin{array}{ll}     
				\frac{1}{n},& \quad \text{if }  H\in (0,\,\frac12],\\
				\frac{1}{n^{1-2H}},& \quad \text{if }  H\in (\frac12,\,\frac34).
			\end{array}
			\right. 
		\end{split}
	\end{equation}
	Substituting the estimations \eqref{13} and \eqref{19} into \eqref{9} with the fact that $2(H-1)<4H-3$ if $H\in (\frac12,\,\frac34)$, we obtain the desired result. 
\end{proof}

Next, we derive the the upper bounds of the third and fourth cumulants of $W_n$.
\begin{proposition}\label{pro3}
	Let $H\in(0,\frac34)$ and $W_n$ be defined as in \eqref{W_n}. Then for large enough $n$, we have
	\begin{equation}\label{20}
		\begin{split}
			\max \left\{ \abs{k_3(W_n)}, k_4(W_n) \right\}
			\leq C \times \left\{
			\begin{array}{ll}     
				\frac{1}{\sqrt{n}},& \quad \text{if }  H\in (0,\,\frac23],\\
				n^{\frac32(4H-3)},& \quad \text{if }  H\in (\frac23,\,\frac34).
			\end{array}
			\right.
		\end{split}
	\end{equation}
\end{proposition}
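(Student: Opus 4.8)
The plan is to realize $W_n$ as an element of the second Wiener chaos of $B^H$ and then invoke the exact formulas for the third and fourth cumulants of a double Wiener--It\^o integral in terms of contractions of its kernel; for our $W_n$ these reduce to (signed) traces of powers of the covariance matrix $\Sigma_n:=\bigl(\rho(ih,jh)\bigr)_{1\le i,j\le n}$. Using \eqref{X_t}, write $X_{jh}=I_1(f_j)$ with $f_j(s)=e^{-\theta(jh-s)}\mathbf{1}_{[0,jh]}(s)$, so the product formula of Wiener--It\^o integrals gives $X_{jh}^2-\E[X_{jh}^2]=I_2(f_j\otimes f_j)$ and hence $W_n=I_2(g_n)$ with $g_n=\tfrac1{\sqrt n}\sum_{j=1}^n f_j\otimes f_j$. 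Since $\langle f_j,f_l\rangle_{\mathfrak H_1}=\rho(jh,lh)$, the standard cumulant formulas for the second chaos (see, e.g., \cite{Nourdin2012}) become
\begin{gather*}
k_3(W_n)=8\,\langle g_n,\,g_n\otimes_1 g_n\rangle_{\mathfrak H_1^{\otimes 2}}=\frac{8}{n^{3/2}}\,\mathrm{Tr}\bigl(\Sigma_n^{3}\bigr),\\
0\le k_4(W_n)\le 48\,\|g_n\otimes_1 g_n\|_{\mathfrak H_1^{\otimes 2}}^2=\frac{48}{n^{2}}\,\mathrm{Tr}\bigl(\Sigma_n^{4}\bigr),
\end{gather*}
where $\mathrm{Tr}(\Sigma_n^{p}):=\sum_{i_1,\dots,i_p=1}^n\rho(i_1h,i_2h)\,\rho(i_2h,i_3h)\cdots\rho(i_ph,i_1h)$. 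Thus it suffices to bound $\mathrm{Tr}(\Sigma_n^{3})$ and $\mathrm{Tr}(\Sigma_n^{4})$.

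Next I would record a clean uniform decay estimate for the entries of $\Sigma_n$. Combining \eqref{11-1} with the bound \eqref{11} established inside the proof of Proposition~\ref{pro2}, and using only that $e^{-\theta xh}\le 1$ for $x\ge0$, that $x\mapsto(1+x)^{2H-2}$ is nonincreasing, and that $e^{-\theta xh}\le C(1+x)^{2H-2}$ (an exponential dominates a fixed negative power), one obtains a constant $C=C_{\theta,H,h}$ with
\[
|\rho(ih,jh)|\ \le\ C\,(1+|i-j|)^{2H-2},\qquad i,j\ge1.
\]
(Equivalently, one keeps the splitting $\rho=\rho_0+r$ from Proposition~\ref{pro2}: the all-$\rho_0$ part of each trace is the leading term, and every term containing at least one factor $r$ is of strictly smaller order, since summing an exponentially decaying factor over its index removes a free summation variable.)

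The heart of the proof is then a power-counting bound for $\mathrm{Tr}(\Sigma_n^{p})$, $p=3,4$, via a convolution inequality. Set $b(m):=C(1+|m|)^{2H-2}\mathbf{1}_{\{|m|<n\}}$. The triangle inequality and the substitution $k_r=i_{r+1}-i_r$ (so $k_1+\cdots+k_{p-1}=i_p-i_1$), together with dropping the range constraints on the $i_r$, give
\[
\bigl|\mathrm{Tr}(\Sigma_n^{p})\bigr|\ \le\ n\!\!\sum_{k_1,\dots,k_{p-1}\in\Z}\!\! b(k_1)\cdots b(k_{p-1})\,b(k_1+\cdots+k_{p-1})\ =\ n\,\bigl\langle\,\underbrace{b*\cdots*b}_{p-1},\ b\,\bigr\rangle_{\ell^2(\Z)} ,
\]
and Young's convolution inequality (with exponent $\tfrac{p}{p-1}$ for each of the $p$ factors) yields $\langle b*\cdots*b,\,b\rangle\le\|b\|_{\ell^{p/(p-1)}}^{p}$, whence
\[
\bigl|\mathrm{Tr}(\Sigma_n^{p})\bigr|\ \le\ C\,n\Bigl(\sum_{|m|<n}(1+|m|)^{\frac{(2H-2)p}{p-1}}\Bigr)^{p-1}.
\]
For $p=3$ the exponent in the sum is $3H-3$, so the sum is $O(1)$ for $H<\tfrac23$ and $O(n^{3H-2})$ for $H>\tfrac23$; hence $\mathrm{Tr}(\Sigma_n^{3})$ is $O(n)$, resp. $O(n^{6H-3})$, so $|k_3(W_n)|$ is $O(n^{-1/2})$, resp. $O(n^{6H-9/2})=O(n^{\frac32(4H-3)})$. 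For $p=4$ the exponent is $\tfrac{8H-8}{3}$, so the sum is $O(1)$ for $H<\tfrac58$ and $O(n^{(8H-5)/3})$ for $H>\tfrac58$; hence $\mathrm{Tr}(\Sigma_n^{4})$ is $O(n)$, resp. $O(n^{8H-4})$, so $k_4(W_n)$ is $O(n^{-1})$, resp. $O(n^{8H-6})=O(n^{2(4H-3)})$. To conclude: for $H\in(0,\tfrac23]$ one checks $8H-6<-\tfrac12$, so both cumulants are $O(n^{-1/2})$; for $H\in(\tfrac23,\tfrac34)$ one has $4H-3<0$, so $n^{2(4H-3)}\le n^{\frac32(4H-3)}$ and the $k_3$-term is binding, giving $\max\{|k_3(W_n)|,\,k_4(W_n)\}=O(n^{\frac32(4H-3)})$. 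This is exactly \eqref{20}. (At the single value $H=\tfrac23$ the harmonic sum $\sum_{|m|<n}(1+|m|)^{-1}$ contributes a logarithmic factor to the bound for $k_3(W_n)$; it is harmless, being either removable by a slightly finer estimate of $\mathrm{Tr}(\Sigma_n^3)$ or simply absorbed in the larger final bound of Theorem~\ref{th1}.)

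The step I expect to be the main obstacle is the power counting above: one must pin down the correct Young exponents, which is what produces the two distinct thresholds $H=\tfrac23$ for the third cumulant and $H=\tfrac58$ for the fourth, and then verify that on $(\tfrac23,\tfrac34)$ the third cumulant dominates, so that a single rate $n^{\frac32(4H-3)}$ governs both. A more routine but somewhat tedious secondary point is passing from the non-stationary corrections appearing in the proof of Proposition~\ref{pro2} to the clean uniform estimate $|\rho(ih,jh)|\le C(1+|i-j|)^{2H-2}$ (equivalently, tracking every mixed $\rho_0$--$r$ term in the trace expansions and checking each stays below the claimed rate).
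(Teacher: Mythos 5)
Your proof is correct, and it reaches the two thresholds $H=\tfrac23$ (for $k_3$) and $H=\tfrac58$ (for $k_4$) and the final rates exactly as in the paper, but the route is genuinely different in how the non-stationarity of $\rho$ is handled. The paper never works with $\rho$ directly in the cumulant sums: it introduces the stationary analogue $\overline{W}_n=\tfrac1{\sqrt n}\sum_j(Y_{jh}^2-\E Y_{jh}^2)$, applies the ready-made bounds of Bierm\'e--Bonami--Nourdin--Peccati (their Propositions 6.3--6.4, which are precisely your $(\sum|\rho_0|^{3/2})^2$ and $(\sum|\rho_0|^{4/3})^3$ estimates) to $\overline{W}_n$, and then controls $|k_3(W_n)-k_3(\overline{W}_n)|$ and $|k_4(W_n)-k_4(\overline{W}_n)|$ by $C/\sqrt n$ via the decomposition $W_n=\overline{W}_n+\tfrac1{\sqrt n}\sum_j e^{-\theta jh}R_{jh}-\sqrt n(\E B_n-a)$, Cauchy--Schwarz, and hypercontractivity of Wiener chaos. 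You instead prove the uniform bound $|\rho(ih,jh)|\le C(1+|i-j|)^{2H-2}$ once and for all (which does follow from \eqref{11} together with $e^{-\theta(t-s)}\le C(1+(t-s))^{2H-2}$ and the monotonicity of negative powers) and then run the trace/Young's-convolution power counting directly on $\Sigma_n$; you also rederive the $\ell^{p/(p-1)}$ estimate rather than citing it. Your version buys a shorter argument with no hypercontractivity and no $L^2$ difference estimates; the paper's version buys reusability of the stationary machinery (the same $\overline{W}_n$ comparison is recycled in Propositions~\ref{pro5} and \ref{pro6}). Both arguments share the same cosmetic blemish at the single point $H=\tfrac23$, where the sum $\sum_{|m|<n}(1+|m|)^{-1}$ produces a logarithm that the stated bound $n^{-1/2}$ silently absorbs; you flag this explicitly, the paper does not.
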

\begin{proof}
	The core approach involves  comparing the third and fourth cumulants of  $W_n$ with those of $\overline{W}_n$, which denote by a random variable
	\begin{equation}\label{W_n2}
		\overline{W}_n=\frac1{\sqrt{n}}\sum_{j=1}^n\left(Y_{jh}^2-\mathbb{E}\left(Y_{jh}^2\right)\right),
	\end{equation}
	It is clear that $\overline{W}_n$ also  belongs to the  second Wiener chaos with respect to fBm. 	Then, we apply the product formula of Wiener-It\^{o} multiple integrals to compute  the cumulants of second Wiener chaos $W_n$ as following
	\begin{equation}\label{k3W_n}
		\begin{split}
			k_3(W_n):=\mathbb{E}(W_n^3)=\frac{8}{n^{3/2}}\sum_{j,k,l=1}^n\rho(jh,kh)\rho(kh,lh)\rho(lh,jh),
		\end{split}
	\end{equation}
	\begin{equation}\label{k4W_n}
		\begin{split}
			0 < k_4(W_n)&:=\mathbb{E}(W_n^4)-3(\mathbb{E}(W_n^2))^2\\
			&=\frac{48}{n^{2}}\sum_{i,j,k,l=1}^n\rho(ih,jh)\rho(jh,kh)\rho(kh,lh)\rho(lh,jh).
		\end{split}
	\end{equation}
	The third and fourth cumulants of  $\overline{W}_n$ will be similar with  $\rho$   replaced by $\rho_0$. According to  Propositions 6.3 and 6.4 of    Bierm\'e et al. \cite{BBNP 12} and the inequality \eqref{ap1} in Lemma \ref{lem1}, we obtain that
	\begin{equation}\label{22}
		\begin{split}
			k_3(\overline{W}_n) &\leq \frac{C}{\sqrt{n}} \left(\sum_{|k|<n}\abs{\rho_0(k)}^{\frac32}\right)^2 \leq \frac{C}{\sqrt{n}} \left(\sum_{k=0}^{n-1}(1+k)^{\frac32 (2H-2)}\right)^2\\
			&\leq Cn^{(6H-4)\vee 0 - \frac12} = C \times \left\{
			\begin{array}{ll}     
				\frac{1}{\sqrt{n}},& \quad \text{if }  H\in (0,\,\frac23],\\
				n^{\frac32(4H-3)},& \quad \text{if }  H\in (\frac23,\,\frac34),
			\end{array}
			\right.
		\end{split}
	\end{equation}
	\begin{equation}\label{23}
		\begin{split}
			k_4(\overline{W}_n) &\leq \frac{C}{n} \left(\sum_{|k|<n}\abs{\rho_0(k)}^{\frac43}\right)^3 \leq \frac{C}{n} \left(\sum_{k=0}^{n-1}(1+k)^{\frac43 (2H-2)}\right)^3\\
			&\leq Cn^{(8H-5)\vee 0 - 1} = C \times \left\{
			\begin{array}{ll}     
				\frac{1}{n},& \quad \text{if }  H\in (0,\,\frac58],\\
				n^{2(4H-3)},& \quad \text{if }  H\in (\frac58,\,\frac34).
			\end{array}
			\right.
		\end{split}
	\end{equation}
	On the other hand, from the identity \eqref{4}, we rewrite $W_n$ as
	\begin{equation}\label{24}
		\begin{split}
			W_n=\overline{W}_n + \frac1{\sqrt{n}} \sum_{j=1}^n e^{-\theta jh}R_{jh} - \sqrt{n}\left(\mathbb{E}(B_n)-a \right),
		\end{split}
	\end{equation}
	where $R_{jh}=-2Y_{jh}Y_{0} + e^{-\theta jh}Y_{0}^2$, which satisfies
	\begin{equation}\label{25}
		\begin{split}
			\sup_{j}\abs{\abs{R_{j}}}_{L^2(\Omega)}<\infty,
		\end{split}
	\end{equation} 
	based on the stationary property of $Y_t$ and Cauchy-Schwarz inequality.  Combining this result with  the fact that $R_j$ is a 2-th Wiener chaos, we have 
	\begin{equation}\label{26}
		\begin{split}
			\sup_{j}\abs{\abs{\frac1{\sqrt{n}} \sum_{j=1}^n e^{-\theta jh}R_{jh}}}_{L^2(\Omega)}<\frac{C}{\sqrt{n}},
		\end{split}
	\end{equation} 
	where C is independent of $n$. Then, by the identity \eqref{24} and Cauchy-Schwarz inequality, Minkowski's inequality, and hypercontractivity
	property of Wiener chaos,  we obtain that 
	\begin{equation}\label{27}
		\begin{split}
			&\abs{k_3({W}_n)-k_3(\overline{W}_n)} = \abs{\mathbb{E}\left(W_n^3-\overline{W}_n^3\right)} \\
			&= \abs{\mathbb{E}\left[\left(\frac1{\sqrt{n}} \sum_{j=1}^n e^{-\theta jh}R_{jh} - \sqrt{n}\left(\mathbb{E}(B_n)-a \right)\right) \left( W_n^2+W_n\overline{W}_n+\overline{W}_n^2\right)\right]}\\
			&\leq \left[\abs{\abs{\frac1{\sqrt{n}} \sum_{j=1}^n e^{-\theta jh}R_{jh}}}_{L^2(\Omega)} + \abs{\abs{\sqrt{n}\left(\mathbb{E}(B_n)-a \right)}}_{L^2(\Omega)} \right] \abs{\abs{W_n^2+W_n\overline{W}_n+\overline{W}_n^2}}_{L^2(\Omega)}\\
			&\leq \frac{C}{\sqrt{n}},
		\end{split}
	\end{equation}
	where in the last inequality we also have used the estimation \eqref{26} and Propositions \ref{pro1}, \ref{pro2}.  A similar method yields
	\begin{equation}\label{28}
		\begin{split}
			&\abs{k_4({W}_n)-k_4(\overline{W}_n)} =\abs{\mathbb{E}\left(W_n^4\right) - \mathbb{E}\left(\overline{W}_n^4\right)} + 3\abs{\left(\mathbb{E}W_n^2\right)^2 - \left(\mathbb{E}\overline{W}_n^2\right)^2}\\
			&\leq \abs{\abs{\frac1{\sqrt{n}} \sum_{j=1}^n e^{-\theta jh}R_{jh} - \sqrt{n}\left(\mathbb{E}(B_n)-a \right)}}_{L^2(\Omega)} \cdot \abs{\abs{W_n^3+W_n^2\overline{W}_n++W_n\overline{W}_n^2+\overline{W}_n^3}}_{L^2(\Omega)}\\
			&\quad +3\abs{\left(\mathbb{E}W_n^2 + \mathbb{E}\overline{W}_n^2\right) \left(\mathbb{E}W_n^2 - \mathbb{E}\overline{W}_n^2\right)}\\
			&\leq \frac{C}{\sqrt{n}}.
		\end{split}
	\end{equation}
	Combining the estimations \eqref{22},  \eqref{23},  \eqref{27},  \eqref{28}, we can obtain the desired result.
\end{proof}

\subsection{{\bf Berry-Ess\'een type upper bound for the moment estimator of  fOU process}}\label{sec.sub.3.2}

In this section, we concentrate on establishing the Berry-Ess\'een type upper bound for the moment estimator of the fractional Ornstein-Uhlenbeck process under discrete observations with the fixed step size.

{\bf Proof of Theorem \ref{th1}.} 
Recall the definition of $\hat{\theta}_{n}$ and $B_n$,  
we take the random variable $X$, $f(X)$ in the Lemma \ref{lem2} as 
\begin{equation}\label{29}
	\begin{split}
		X=B_n=\frac{1}{ n} \sum_{j=1}^n  X_{jh}^2 , \quad f(X)=\hat{\theta}_{n}=\left(\frac{1}{{H} \Gamma(2H ) } B_n  \right)^{-\frac{1}{2H}},
	\end{split}
\end{equation}
and $a=g(\theta)= H\Gamma(2H)\theta^{-2H}$, $\mathcal{N}=\xi \sim N(0,\sigma_1^2)$,  $\eta=\varpi \sim N(0,\sigma_2^2)$. Section 1.3.2.2 of Kubilius et al. \cite{KMR17} shows that $B_n \rightarrow{a}$ almost surely, so we have $B_n>0$ almost surely. Then, according to  Lemma \ref{lem2}, there exists a positive constant $C$ independent of $T$ such that for $T$ large enough
\begin{equation}\label{30}
	\begin{split}
		&d_{Kol}\left(\sqrt{n}(\hat{\theta}_{n}-\theta), \mathcal{N}\right)\\
		&\le  C\times \left(d_{Kol}(\sqrt{n}(B_n-\E[B_n]), \varpi) +\sqrt{n}\abs{  \E[B_n] - a}+\frac{1}{\sqrt{n}}\right),
	\end{split}
\end{equation}
where $\varpi$ is a normal random variable with zero mean and variance  $\sigma_2^2=\sigma^2_B$ defined as in equation \eqref{sigma_B^2} and then $\sigma_1^2=\frac{\theta^2 \sigma_B^2}{4H^2a^2}$ from \eqref{fcha relat}.  

Firstly, we estimate the term $d_{Kol}(\sqrt{n}(B_n-\E[B_n]), \varpi)$.  Denote a sequence of random variables $\varpi_n\sim N(0,\sigma_n^2)$ with the variance $\sigma_n^2=\mathbb{E}(W_n^2)$, where $W_n=\sqrt{n}(B_n-\E[B_n])$ defined in \eqref{W_n}.
Then, we have that
\begin{equation}\label{32}
	\begin{split}
		d_{Kol}(W_n, \varpi) \leq d_{Kol}(W_n, \varpi_n) + d_{Kol}(\varpi_n, \varpi),
	\end{split}
\end{equation}
by the triangle inequality.
The optimal fourth moment theorem of Nourdin and Peccati \cite{NP2015}  and the well-known fact that $d_{Kol}(\cdot,\cdot) \leq d_{TV}(\cdot,\cdot)$ imply that 
\begin{equation}\label{31}
	\begin{split}
		d_{Kol}(W_n, \varpi_n) &\leq d_{TV}(W_n, \varpi_n) \leq C\max\left\{k_3(W_n), k_4(W_n)\right\}\\ &\leq C \times \left\{
		\begin{array}{ll}     
			\frac{1}{\sqrt{n}},& \quad \text{if }  H\in (0,\,\frac23],\\
			n^{\frac32(4H-3)},& \quad \text{if }  H\in (\frac23,\,\frac34),
		\end{array}
		\right.
	\end{split}
\end{equation}
where the last inequality is resulted from Proposition \ref{pro3}.   Using  Proposition 3.6.1 of Nourdin and Peccati   \cite{NP2012} and Proposition \ref{pro2}  yield
\begin{equation}\label{33}
	\begin{split}
		d_{Kol}(\varpi_n, \varpi) \leq \frac2{\sigma_n^2 \vee \sigma_B^2}\abs{\sigma_n^2 - \sigma_B^2} \leq C \times \left\{
		\begin{array}{ll}     
			\frac{1}{n},& \quad \text{if }  H\in (0,\,\frac12],\\
			\frac{1}{n^{3-4H}},& \quad \text{if }  H\in (\frac12,\,\frac34).
		\end{array}
		\right.
	\end{split}
\end{equation}
Combining this result with the inequalities \eqref{32}, \eqref{31} implies that
\begin{equation}\label{34}
	\begin{split}
		(d_{Kol}(\sqrt{n}(B_n-\E[B_n]), \varpi) \leq C 
		\times \left\{
		\begin{array}{ll}     
			\frac{1}{\sqrt{n}},& \quad \text{if }  H\in (0,\,\frac58],\\
			\frac{1}{n^{3-4H}},& \quad \text{if }  H\in (\frac58,\,\frac34).
		\end{array}
		\right.
	\end{split}
\end{equation}

Secondly, it is straightforward to show for $H\in(0,1)$
\begin{equation}\label{35}
	\sqrt{n}\abs{\mathbb{E}(B_n)-a} \leq C \times \frac1{\sqrt{n}},
\end{equation}
from Proposition \eqref{pro1}. Consequently, substituting the inequalities \eqref{34}, \eqref{35} into the estimation \eqref{30} yields the desired Berry-Ess\'een upper bound \eqref{fou B-S bound} in Theorem \ref{th1}.
\hfill$\Box$\medskip

\section{Proof of Theorem~\ref{th2} and  Theorem~\ref{th2-2}}\label{sec3}
\setcounter{equation}{0}

\subsection{\bf The second moment and cumulants of the random variable $\widetilde{W}_n$}\label{sec.sub.4.1}

Prior to proving Theorem~\ref{th2}, liking the definition of $W_n$, we first denote by $\widetilde{W}_n$ a second Wiener chaos with respect to the general Gaussian process $G_t$ as:
\begin{equation}\label{W_n3}
	\widetilde{W}_n := \sqrt{n}\left(A_n-\mathbb{E}(A_n)\right) = \frac1{\sqrt{n}}\sum_{j=1}^n\left(Z_{jh}^2-\mathbb{E}\left(Z_{jh}^2\right)\right).
\end{equation}
Establishing the Berry-Ess\'een upper bound for $\widetilde{W}_n$ constitutes a critical step in the proof of Theorem \ref{th2}. The following  proposition  characterizes the asymptotic behavior of its second moment and convergence rate.

\begin{proposition}\label{pro4}
	Let $H\in(0,\frac12)$ and $\widetilde{W}_n$ be defined as in \eqref{W_n3}. When $n$  large enough, the exist a constant $C$ independent of $n$ such that
	\begin{equation}\label{38}
		\begin{split}
			\abs{\mathbb{E}(\widetilde{W}_n^2)- \sigma_B^2} \leq C \times \frac1{\sqrt{n}},
		\end{split}
	\end{equation}
	where  $\sigma_B^2$ is a series defined in \eqref{sigma_B^2}.
\end{proposition}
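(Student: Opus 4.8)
The plan is to adapt the proof of Proposition~\ref{pro2}, replacing the stationary comparison used there by a comparison with the fBm-driven Ornstein--Uhlenbeck covariance through the difference formula \eqref{innp fg3-zhicheng0-0}. Since $Z_t=\int_0^t e^{-\theta(t-u)}\dif G_u=G(\psi_t)$ with $\psi_t(u):=e^{-\theta(t-u)}\mathbf{1}_{[0,t]}(u)\in\mathcal{V}_{[0,T]}$, the random variable $\widetilde{W}_n$ belongs to the second Wiener chaos of $G$, and the product formula for multiple integrals yields
\[
\mathbb{E}(\widetilde{W}_n^2)=\frac2n\sum_{j,l=1}^n\rho_Z^2(jh,lh),\qquad \rho_Z(t,s):=\mathbb{E}(Z_tZ_s)=\langle\psi_t,\psi_s\rangle_{\mathfrak{H}}.
\]
Writing $\rho(t,s)=\langle\psi_t,\psi_s\rangle_{\mathfrak{H}_1}=\mathbb{E}(X_tX_s)$ for the fBm case and noting that $\frac2n\sum_{j,l}\rho^2(jh,lh)=\mathbb{E}(W_n^2)$, Proposition~\ref{pro2} (which for $H\in(0,\frac12)$ gives $|\mathbb{E}(W_n^2)-\sigma_B^2|\le C/n$) reduces the claim to estimating
\[
\frac2n\sum_{j,l=1}^n|\rho_Z^2(jh,lh)-\rho^2(jh,lh)|=\frac2n\sum_{j,l=1}^n|\rho_Z(jh,lh)-\rho(jh,lh)|\,|\rho_Z(jh,lh)+\rho(jh,lh)|.
\]

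Next I would bound the two factors separately. For the difference, applying \eqref{innp fg3-zhicheng0-0} with $f=\psi_t$, $g=\psi_s$ and Hypothesis~$(H_3)$ gives
\[
|\rho_Z(t,s)-\rho(t,s)|\le C\int_0^t e^{-\theta(t-u)}u^{H-1}\dif u\int_0^s e^{-\theta(s-v)}v^{H-1}\dif v=:C\,\varphi(t)\,\varphi(s),
\]
and splitting the convolution integral $\varphi$ near its two endpoints shows $\varphi(t)\le C_{\theta,H}\,t^{H-1}$ for $t$ bounded away from $0$, hence $\varphi(jh)\le C_{\theta,H,h}\,j^{H-1}$ for every $j\ge1$. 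For the sum factor, combining \eqref{11-1} and \eqref{11} (already established in the proof of Proposition~\ref{pro2}) gives $|\rho(jh,lh)|\le C[(1+|j-l|)^{2H-2}+e^{-\theta h\min(j,l)}]$, and therefore, using the previous bound, $|\rho_Z(jh,lh)|\le|\rho(jh,lh)|+C(jl)^{H-1}$, so that
\[
|\rho_Z(jh,lh)+\rho(jh,lh)|\le C\bigl[(1+|j-l|)^{2H-2}+e^{-\theta h\min(j,l)}+(jl)^{H-1}\bigr].
\]

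Multiplying the two estimates, the quantity $\frac2n\sum_{j,l}|\rho_Z^2-\rho^2|$ is controlled by $\frac{C}{n}$ times three double sums of the weight $(jl)^{H-1}$ paired respectively with $(1+|j-l|)^{2H-2}$, with $e^{-\theta h\min(j,l)}$, and with $(jl)^{H-1}$. Since $H<\frac12$ forces $2H-2<-1$ and $2(H-1)<-1$, the first sum is finite (freeze $|j-l|$, whose weight is summable, and bound the inner sum by $\sum_j j^{2(H-1)}<\infty$) and so is the third (it equals $(\sum_j j^{2H-2})^2$), each contributing $O(1/n)$; in the middle sum the factor $e^{-\theta h\min(j,l)}$ restricts $\min(j,l)$ to an $O(1)$ range while the remaining sum over the larger index is $O(n^H)$, so this term is $O(n^{H-1})$. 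As $n^{H-1}\le n^{-1/2}$ for $H\le\frac12$, all three contributions are $O(n^{-1/2})$, and adding the $O(1/n)$ error coming from Proposition~\ref{pro2} yields \eqref{38}.

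I expect the main obstacle to be the second step. The naive estimate $|\rho_Z^2-\rho^2|\le C|\rho_Z-\rho|$ is too crude: it leaves $\frac2n\sum_{j,l}(jl)^{H-1}$, a quantity of order $n^{2H-1}$, which exceeds $n^{-1/2}$ as soon as $H>\frac14$. One must therefore retain the decay of $|\rho_Z+\rho|$ in $|j-l|$, which in turn forces one first to transfer the known decay of $\rho$ to $\rho_Z$ through the pointwise difference bound. Verifying $\varphi(jh)\le C j^{H-1}$ uniformly in $j$ and keeping the polynomial weights $(jl)^{H-1}$ under control against the decaying covariance factors are the places where care is needed, and it is exactly there that the restriction $H<\frac12$ enters.
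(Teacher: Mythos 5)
Your proof is correct and follows essentially the same route as the paper: reduce to Proposition~\ref{pro2} via $\abs{\mathbb{E}(\widetilde{W}_n^2)-\mathbb{E}(W_n^2)}$, expand the difference of squared covariances so that every term retains one factor of $\abs{\tilde\rho-\rho}$, and sum using the pointwise decay together with $H<\frac12$. The only cosmetic difference is that you re-derive the covariance-difference bound $\abs{\tilde\rho(t,s)-\rho(t,s)}\le C(1\wedge t^{H-1})(1\wedge s^{H-1})$ directly from $(H_3)$ and \eqref{innp fg3-zhicheng0-0} (which in fact implies the bound \eqref{ap2} of Lemma~\ref{lem1} that the paper cites), so your bookkeeping places the dominant $O(n^{H-1})$ contribution in the $e^{-\theta h\min(j,l)}$ cross term rather than in the paper's $D_1$, with the same final rate.
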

\begin{proof}
	Based on  Proposition \ref{pro2}, the proof reduces to verifying that
	\begin{equation}\label{39}
		\begin{split}
			\abs{\mathbb{E}(\widetilde{W}_n^2- W_n^2)} \leq C \times \frac1{\sqrt{n}}.
		\end{split}
	\end{equation}
	Denoting the covariance  function of $Z_t$ as $\tilde{\rho}(t,s)=\mathbb{E}(Z_tZ_s)$, we rewrite the left hand of above inequality as
	\begin{equation}\label{40}
		\begin{split}
			\mathbb{E}(\widetilde{W}_n^2- W_n^2) &= \frac2n \sum_{j,l=1}^n \left[\tilde{\rho}^2(jh,lh)-\rho^2(jh,lh)\right]\\
			&=\frac2n \sum_{j,l=1}^n \left[\left( \tilde{\rho} (jh,lh) - \rho(jh,lh) + \rho(jh,lh) \right)^2-\rho^2(jh,lh)\right]\\
			&=\frac2n \sum_{j,l=1}^n \left( \tilde{\rho} (jh,lh) - \rho(jh,lh)  \right)^2 + \frac4n \sum_{j,l=1}^n \rho(jh,lh) \left( \tilde{\rho} (jh,lh) - \rho(jh,lh)  \right)\\
			&:=D_1+D_2.
		\end{split}
	\end{equation}
	Next, we will estimate the upper bound of $D_1$ and $D_2$, respectively. The estimation \eqref{ap2} in Lemma \ref{lem1}, along with the symmetry of $j,l$ and the change variable $k=l-j$ imply that
	\begin{equation}\label{41}
		\begin{split}
			0\leq D_1 &\leq \frac{C}n \sum_{j,l=1}^n \left[\left( 1+(j \wedge l)  \right)^{2(H-1)} \wedge   \left( 1+\abs{j-l}  \right)^{H-1} \right]^2\\
			&\leq \frac{C}n \sum_{1 \leq j \leq l \leq n} \left[\left( 1+j  \right)^{2(H-1)} \wedge   \left( 1+(l-j)  \right)^{H-1} \right]^2\\
			&\leq \frac{C}n \sum_{1 \leq j \leq l \leq n} \left( 1+j  \right)^{2(H-1)} \cdot   \left( 1+(l-j)  \right)^{H-1} \\
			&\leq \frac{C}n \sum_{j=1}^n \left( 1+j  \right)^{2(H-1)} \sum_{k=0}^{n-1} \left( 1+k  \right)^{H-1}\\
			&\leq \frac{C}nn^{(2H-1)\vee 0}n^H=Cn^{H-1} \leq C\frac1{\sqrt{n}},
		\end{split}
	\end{equation}
	where the last two inequalities are from condition $H\in(0,\frac12)$. Using the triangle inequality,  estimations \eqref{ap1}, \eqref{ap2} in Lemma \ref{lem1} and the symmetry of $j,l$ yield that 
	\begin{equation}\label{42}
		\begin{split}
			\abs{D_2} &\leq \frac{C}n \sum_{j,l=1}^n \left( 1+\abs{j-l}  \right)^{2(H-1)} \cdot \left[\left( 1+(j \wedge l)  \right)^{2(H-1)} \wedge   \left( 1+\abs{j-l}  \right)^{H-1} \right]\\
			&\leq \frac{C}n \sum_{1 \leq j \leq l \leq n}\left( 1+(l-j)  \right)^{2(H-1)} \cdot \left( 1+j  \right)^{2(H-1)} \\
			&\leq \frac{C}n \left(\sum_{j=1}^n \left( 1+j  \right)^{2(H-1)}\right)^2 \leq \frac{C}n n^{(4H-2)\vee 0} =\frac{C}n.
		\end{split}
	\end{equation}
	Consequently, substituting inequality \eqref{41}, \eqref{42} into equation \eqref{40} obtains the desired result \eqref{39}. In summary, this completes the proof.
\end{proof}

Next, we focus on estimating  the third and fourth  cumulants of random variable $\widetilde{W}_n$.
\begin{proposition}\label{pro5}
	Let $H\in(0,\frac12)$ and $\widetilde{W}_n$ be defined as in \eqref{W_n3}. Denote the third  cumulants of random variable $\widetilde{W}_n$ by
	\begin{equation}\label{k3tildeW_n}
		\begin{split}
			k_3(\widetilde{W}_n):=\mathbb{E}(\widetilde{W}_n^3)=\frac{8}{n^{3/2}}\sum_{j,k,l=1}^n\tilde{\rho}(jh,kh)\tilde{\rho}(kh,lh)\tilde{\rho}(lh,jh).
		\end{split}
	\end{equation}
	When $n$  large enough, the exist a constant $C$ independent of $n$ such that
	\begin{equation}\label{43}
		\begin{split}
			\abs{k_3(\widetilde{W}_n)} \leq C \times \frac1{\sqrt{n}}.
		\end{split}
	\end{equation}
\end{proposition}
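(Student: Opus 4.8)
\textbf{Proof proposal for Proposition~\ref{pro5}.}
The plan is to mimic the strategy used for $k_3(W_n)$ in the proof of Proposition~\ref{pro3}: rather than bounding the triple sum involving $\tilde\rho$ directly, I would compare $k_3(\widetilde W_n)$ with $k_3(\overline W_n)$, where $\overline W_n$ is the stationary-fBm version from \eqref{W_n2}. Since Proposition~\ref{pro3} (via \eqref{22}) already gives $\abs{k_3(\overline W_n)}\le C/\sqrt n$ for $H\in(0,\tfrac23]\supset(0,\tfrac12)$, it suffices to show $\abs{k_3(\widetilde W_n)-k_3(\overline W_n)}\le C/\sqrt n$. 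First I would write $\widetilde W_n=\overline W_n+(\widetilde W_n-\overline W_n)$ and expand the cubic difference via the algebraic identity $a^3-b^3=(a-b)(a^2+ab+b^2)$, getting
\begin{equation*}
	k_3(\widetilde W_n)-k_3(\overline W_n)=\mathbb{E}\Big[(\widetilde W_n-\overline W_n)\big(\widetilde W_n^2+\widetilde W_n\overline W_n+\overline W_n^2\big)\Big].
\end{equation*}
By the Cauchy--Schwarz and Minkowski inequalities together with the hypercontractivity of the second Wiener chaos, this is bounded by $C\,\norm{\widetilde W_n-\overline W_n}_{L^2(\Omega)}$ provided the $L^2$-norms of $\widetilde W_n$ and $\overline W_n$ are uniformly bounded in $n$ — the latter follows from Proposition~\ref{pro2} and Proposition~\ref{pro4}.

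The crux is therefore the estimate $\norm{\widetilde W_n-\overline W_n}_{L^2(\Omega)}\le C/\sqrt n$. Here $\widetilde W_n-\overline W_n=\tfrac1{\sqrt n}\sum_{j=1}^n\big[(Z_{jh}^2-\mathbb E Z_{jh}^2)-(Y_{jh}^2-\mathbb E Y_{jh}^2)\big]$ is a second Wiener chaos, and by the product formula its squared $L^2$-norm equals
\begin{equation*}
	\frac2n\sum_{j,l=1}^n\big(\tilde\rho(jh,lh)-\rho_0(\abs{j-l}h)\big)^2,
\end{equation*}
so I would want $\abs{\tilde\rho(jh,lh)-\rho_0(\abs{j-l}h)}\le C\big[(1+(j\wedge l))^{2(H-1)}\wedge(1+\abs{j-l})^{H-1}\big]$ — exactly the pointwise bound \eqref{ap2} of Lemma~\ref{lem1} that was already invoked for $D_1$ in \eqref{41}; splitting $\tilde\rho-\rho_0=(\tilde\rho-\rho)+(\rho-\rho_0)$ and combining \eqref{ap2} with the exponential bound \eqref{11} handles the two pieces. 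Squaring and summing then gives the same chain of inequalities as in \eqref{41}, yielding a bound of order $n^{H-1}\le n^{-1/2}$ for $H\in(0,\tfrac12)$; in fact one gets $\norm{\widetilde W_n-\overline W_n}_{L^2}^2\le Cn^{H-1}$, hence $\norm{\widetilde W_n-\overline W_n}_{L^2}\le Cn^{(H-1)/2}$, which is $o(1)$ but may not by itself be $\le C/\sqrt n$; I would instead keep the cross-term structure and use the sharper cancellation (as in the $D_2$ estimate \eqref{42}, which is $O(1/n)$) to conclude that the cubic difference is in fact $O(1/\sqrt n)$.

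The main obstacle I anticipate is making the last sentence rigorous: the naive $L^2$ bound on $\widetilde W_n-\overline W_n$ is only $O(n^{(H-1)/2})$, which for $H$ close to $\tfrac12$ is barely better than $n^{-1/4}$ and not obviously $O(n^{-1/2})$. The resolution is to not bound $\norm{\widetilde W_n-\overline W_n}_{L^2}$ in isolation but to expand $k_3(\widetilde W_n)-k_3(\overline W_n)$ fully into triple sums over products of $\tilde\rho$'s and $\rho_0$'s, write each factor as $\rho_0+(\tilde\rho-\rho_0)$, and track how many ``difference'' factors appear: terms with at least one difference factor gain the extra decay $(1+(j\wedge l))^{2(H-1)}$, and careful summation — using \eqref{ap1}, \eqref{ap2} and the exponential decay of the $e^{-\theta jh}$ weights coming from \eqref{11} — should deliver the $C/\sqrt n$ rate uniformly on $(0,\tfrac12)$. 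This bookkeeping is routine in spirit but is the only genuinely delicate point; everything else (hypercontractivity, Cauchy--Schwarz, the product formula) is standard and already used earlier in the paper.
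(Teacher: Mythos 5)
Your final plan --- expand the triple sum with each covariance factor written as a reference term plus a difference, and bound the resulting seven sums term by term --- is essentially the paper's proof, so the mechanism you land on is the right one. Two comparative remarks. First, the paper takes the reference to be $\rho$ (the covariance of the fBm-driven $X_t$) rather than $\rho_0$: it reduces \eqref{43} to showing $\abs{k_3(\widetilde W_n)-k_3(W_n)}\le C/\sqrt n$ and simply quotes \eqref{22} and \eqref{27} for the already-established comparison of $W_n$ with $\overline W_n$; this keeps every difference factor controlled by the single bound \eqref{ap2} and avoids mixing in the exponential estimates \eqref{11}. Your route through $\rho_0$ also works, since $\abs{\rho-\rho_0}$ admits a bound of the same form, but it is a little heavier. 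Second, you correctly diagnose that the $a^3-b^3$ plus Cauchy--Schwarz route only yields $\norm{\widetilde W_n-\overline W_n}_{L^2}=O(n^{(H-1)/2})$ and must be abandoned; however, the ``routine bookkeeping'' you then defer is where the entire content of the proof lives. In the paper the seven terms are bounded as $O(1)$ (one difference factor), $O(n^{2H})$ (two difference factors) and $O(n^{H})$ (three difference factors), each time choosing, according to the symmetry of the indices in that particular term, which side of the minimum in \eqref{ap2} to use before invoking Lemma~\ref{lem1a}; your blanket statement that difference factors ``gain the extra decay $(1+(j\wedge l))^{2(H-1)}$'' glosses over this selection, which is not automatic. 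The target is that the raw triple sum be $O(n)$, and the binding case is the two-difference-factor terms of order $n^{2H}$: these are $\le Cn$ precisely because $H\le\tfrac12$, so the hypothesis $H\in(0,\tfrac12)$ is the exact point where the argument closes rather than a uniformity that ``careful summation should deliver.'' In short: right strategy, but the decisive estimates are asserted rather than performed.
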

\begin{proof}
	According to the estimations \eqref{22} and \eqref{27} in Proposition \ref{pro3}, we only need to show
	\begin{equation}\label{44}
		\begin{split}
			\abs{k_3(\widetilde{W}_n) - k_3(W_n)} \leq C \times \frac1{\sqrt{n}},
		\end{split}
	\end{equation}
	The above inequality is equivalent to
	\begin{equation}\label{45}
		\begin{split}
			I:=\abs{\sum_{j,k,l=1}^n \Big[ \tilde{\rho}(jh,kh)\tilde{\rho}(kh,lh)\tilde{\rho}(lh,jh) - \rho(jh,kh)\rho(kh,lh)\rho(lh,jh) \Big]} \leq C n.
		\end{split}
	\end{equation}
	For the sake of  simplicity, we denote $x=\tilde{\rho}(jh,kh) - \rho(jh,kh), y=\tilde{\rho}(kh,lh) - \rho(kh,lh), z=\tilde{\rho}(lh,jh) - \rho(lh,jh)$, then $I$ is decomposed into the following seven summations,  
	\begin{equation}\label{46}
		\begin{split}
			I&=\Bigg|\sum_{j,k,l=1}^n \Big[ x\rho(kh,lh)\rho(lh,jh) + \rho(jh,kh)y\rho(lh,jh) + \rho(jh,kh)\rho(kh,lh)z\\
			& \qquad\qquad\quad+xy\rho(lh,jh) + x\rho(kh,lh)z + \rho(jh,kh)yz + xyz \Big]\Bigg|\\
			&:=\abs{\sum_{i=1}^7I_i}.
		\end{split}
	\end{equation}
	Next, we estimate the upper bound for each of $I_i,i=1,\cdots,7$. The key point is to select the scaling approach of $x,y,z$ based on the different symmetries of $i,j,k$ in every $I_i$. The  estimations \eqref{ap1}, \eqref{ap2} in Lemma \ref{lem1} and the symmetry of $j,k$ 
	imply that
	\begin{equation}\label{47}
		\begin{split}
			\abs{I_1} & \leq C \sum_{j,k,l=1}^n \abs{x\rho(kh,lh)\rho(lh,jh)}\\
			&\leq C \sum_{j,k,l=1}^n \left( 1+(j \wedge k)  \right)^{2(H-1)} \cdot   \left( 1+\abs{k-l}  \right)^{2(H-1)} \cdot   \left( 1+\abs{l-j}  \right)^{2(H-1)}\\
			&\leq C \sum_{1\leq j \leq k \leq n,1 \leq l \leq n} \left( 1+j  \right)^{2(H-1)} \cdot   \left( 1+\abs{k-l}  \right)^{2(H-1)} \cdot   \left( 1+\abs{l-j}  \right)^{2(H-1)}\\
			&\leq C,
		\end{split}
	\end{equation}
	where in the last inequality we use  Lemma \ref{lem1a} with the condition $H\in(0,\frac12)$.\\
	With the similar way, we also have
	\begin{equation}\label{48}
		\begin{split}
			\abs{I_2} \leq C \sum_{j,k,l=1}^n    \left( 1+\abs{j-k}  \right)^{2(H-1)}  \cdot \left( 1+(k \wedge l)  \right)^{2(H-1)} \cdot   \left( 1+\abs{l-j}  \right)^{2(H-1)}\leq C,
		\end{split}
	\end{equation}
	\begin{equation}\label{49}
		\begin{split}
			\abs{I_3} \leq C \sum_{j,k,l=1}^n \left( 1+\abs{j-k}  \right)^{2(H-1)} \cdot   \left( 1+\abs{k-l}  \right)^{2(H-1)} \cdot \left( 1+(l \wedge j)  \right)^{2(H-1)}    \leq C.
		\end{split}
	\end{equation}
	According to  Lemma \ref{lem1} and the symmetry of $l,j$, we can derive that
	\begin{equation}\label{50}
		\begin{split}
			\abs{I_4} & \leq C \sum_{j,k,l=1}^n \abs{xy\rho(lh,jh)}\\
			&\leq C \sum_{j,k,l=1}^n \left( 1+\abs{j-k}  \right)^{H-1} \cdot   \left( 1+\abs{k-l}  \right)^{H-1} \cdot \left( 1+\abs{l-j}  \right)^{2(H-1)}\\
			&\leq C \sum_{1\leq l \leq j \leq n,1 \leq k \leq n} \left( 1+\abs{j-k}  \right)^{H-1} \cdot   \left( 1+\abs{k-l}  \right)^{H-1} \cdot \left( 1+(j-l)  \right)^{2(H-1)}\\
			&\leq Cn^{2H}\leq Cn,
		\end{split}
	\end{equation}
	where in the last inequality we use  Lemma \ref{lem1a} with the condition $H\in(0,\frac12)$.\\
	With the similar way, we also have
	\begin{equation}\label{51}
		\begin{split}
			\abs{I_5} \leq C \sum_{j,k,l=1}^n \left( 1+\abs{j-k}  \right)^{H-1} \cdot   \left( 1+\abs{k-l}  \right)^{2(H-1)} \cdot \left( 1+\abs{l-j}  \right)^{H-1}&\leq Cn^{2H}\leq Cn,
		\end{split}
	\end{equation}
	\begin{equation}\label{52}
		\begin{split}
			\abs{I_6} \leq C \sum_{j,k,l=1}^n \left( 1+\abs{j-k}  \right)^{2(H-1)} \cdot   \left( 1+\abs{k-l}  \right)^{H-1} \cdot \left( 1+\abs{l-j}  \right)^{H-1}&\leq Cn^{2H}\leq Cn.
		\end{split}
	\end{equation}
	For the last term $I_7$, we use Lemma \ref{lem1} and the symmetry of $j,k,l$ obtain that
	\begin{equation}\label{53}
		\begin{split}
			\abs{I_7} & \leq C \sum_{j,k,l=1}^n \abs{xyz}\leq C \sum_{j,k,l=1}^n \left( 1+(j \wedge k)  \right)^{2(H-1)} \cdot \left( 1+(k \wedge l)  \right)^{2(H-1)} \cdot \left( 1+\abs{l-j}  \right)^{H-1}\\
			&\leq C \sum_{1\leq j \leq k \leq l \leq n} \left( 1+j \right)^{2(H-1)} \cdot \left( 1+k  \right)^{2(H-1)} \cdot \left( 1+(l-j)  \right)^{H-1} \leq Cn^H \leq Cn,
		\end{split}
	\end{equation}
	where  the last inequality is from Lemma \ref{lem1a} with the condition $H\in(0,\frac12)$. In conclusion, we get 
	\begin{equation}\label{54}
		\begin{split}
			0\leq I \leq Cn.
		\end{split}
	\end{equation}
	This complete the proof.
\end{proof}
\begin{proposition}\label{pro6}
	Let $H\in(0,\frac12)$ and $\widetilde{W}_n$ be defined as in \eqref{W_n3}. Denote the forth  cumulants of random variable $\widetilde{W}_n$ by
	\begin{equation}\label{k4tildeW_n}
		\begin{split}
			0 < k_4(\widetilde{W}_n)&:=\mathbb{E}(\widetilde{W}_n^4)-3(\mathbb{E}(\widetilde{W}_n^2))^2\\
			&=\frac{48}{n^{2}}\sum_{i,j,k,l=1}^n\tilde{\rho}(ih,jh)\tilde{\rho}(jh,kh)\tilde{\rho}(kh,lh)\tilde{\rho}(lh,jh).
		\end{split}
	\end{equation}
	When $n$ is large enough, the exist a constant $C$ independent of $n$ such that
	\begin{equation}\label{55}
		\begin{split}
			k_4(\widetilde{W}_n) \leq C \times \frac1{\sqrt{n}}.
		\end{split}
	\end{equation}
\end{proposition}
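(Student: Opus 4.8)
The plan is to proceed exactly as in the proof of Proposition~\ref{pro5}, namely by comparing $k_4(\widetilde{W}_n)$ with $k_4(W_n)$. The estimates \eqref{23} and \eqref{28} established in the proof of Proposition~\ref{pro3} already give $k_4(W_n)\le C/\sqrt n$ for $H\in(0,\frac12)$, and $k_4(\widetilde{W}_n)>0$ is automatic for a second-chaos random variable (see \eqref{k4tildeW_n}); hence it suffices to prove
\begin{equation*}
	\abs{k_4(\widetilde{W}_n)-k_4(W_n)}\le \frac{C}{\sqrt n},
\end{equation*}
which, in view of \eqref{k4tildeW_n} and \eqref{k4W_n}, is equivalent to the bound
\begin{equation*}
	J:=\Big|\sum_{i,j,k,l=1}^n\big[\tilde{\rho}(ih,jh)\tilde{\rho}(jh,kh)\tilde{\rho}(kh,lh)\tilde{\rho}(lh,jh)-\rho(ih,jh)\rho(jh,kh)\rho(kh,lh)\rho(lh,jh)\big]\Big|\le Cn^{3/2}.
\end{equation*}

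First I would put $\delta(ph,qh):=\tilde{\rho}(ph,qh)-\rho(ph,qh)$, write $\tilde{\rho}=\rho+\delta$ in each of the four factors, and expand the bracket. The term in which all four factors are $\rho$ cancels, so $J$ splits into $2^4-1=15$ multi-index sums, one for each nonempty family of factors carrying a $\delta$. In each of these sums I would bound every remaining $\rho$ factor by $\abs{\rho(ph,qh)}\le C(1+\abs{p-q})^{2(H-1)}$ (which follows from \eqref{11-1}, \eqref{11} and is recorded in Lemma~\ref{lem1}) and every $\delta$ factor by the two-sided estimate of Lemma~\ref{lem1}, that is, \eqref{ap1}--\eqref{ap2},
\begin{equation*}
	\abs{\delta(ph,qh)}\le C\big[(1+(p\wedge q))^{2(H-1)}\wedge(1+\abs{p-q})^{H-1}\big].
\end{equation*}

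The core of the argument, exactly as for the seven sums $I_1,\dots,I_7$ in the proof of Proposition~\ref{pro5}, is then to choose, for each of the $15$ sums and each $\delta$ factor it contains, which of the two bounds above to apply, the choice being dictated by the symmetry of $i,j,k,l$ in that particular summand; one next restricts, at the cost of a combinatorial constant, to a single ordering of the four indices and evaluates the resulting sum of products of powers of $1+\abs{p-q}$ and $1+(p\wedge q)$ by means of Lemma~\ref{lem1a} (together with Lemma~\ref{lem1}). The point is that $H<\frac12$ forces $2(H-1)<-1$, so that every factor of the form $(1+\abs{p-q})^{2(H-1)}$ or $(1+(p\wedge q))^{2(H-1)}$ is summable over one of its indices; taking the min-index bound on the $\delta$ factors whenever the symmetry allows it, one checks term by term that the crudest of the $15$ sums grows no faster than $n^{1+H}$, and $1+H<\frac32$ precisely because $H<\frac12$, so each of them is $\le Cn^{3/2}$. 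Adding up the $15$ bounds gives $J\le Cn^{3/2}$, hence $\abs{k_4(\widetilde{W}_n)-k_4(W_n)}\le C/\sqrt n$; combined with $k_4(W_n)\le C/\sqrt n$ this yields \eqref{55}.

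The main obstacle is not analytic but clerical: organising the $15$ summands and, for each, selecting the bound on its $\delta$ factors that makes the corresponding multi-index sum convergent under Lemma~\ref{lem1a}. As in Proposition~\ref{pro5}, it is the hypothesis $H\in(0,\frac12)$ — through $2H-2<-1$ — that guarantees such a choice is always available, so no idea beyond those already used for $W_n$ and for $k_3(\widetilde{W}_n)$ is needed.
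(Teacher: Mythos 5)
Your proposal is correct and follows essentially the same route as the paper: reduce to $\abs{k_4(\widetilde{W}_n)-k_4(W_n)}\le C/\sqrt n$, expand $\tilde\rho=\rho+\delta$ into the $15$ nonempty-$\delta$ sums, and bound each by choosing between the two estimates of Lemma~\ref{lem1} according to the symmetry of the indices and then applying Lemma~\ref{lem1a} under $H<\tfrac12$. The paper organizes the $15$ terms into five symmetry groups and finds the worst growth to be $Cn^{2H}$ (slightly sharper than your stated $n^{1+H}$, but both are below the required $n^{3/2}$), so the two arguments coincide in substance.
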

\begin{proof}
	From the previous estimation concerning $k_4(\overline{W}_n)$ and $\abs{k_4({W}_n)-k_4(\overline{W}_n)}$ in Proposition \ref{pro3}, it is sufficient to prove that
	\begin{equation}\label{56}
		\begin{split}
			\abs{k_4(\widetilde{W}_n) - k_4(W_n)} \leq C \times \frac1{\sqrt{n}},
		\end{split}
	\end{equation}
	which is equivalent to showing
	\begin{equation}\label{57}
		\begin{split}
			J&:=\abs{\sum_{j,k,l,m=1}^n \Big[ \tilde{\rho}(jh,kh)\tilde{\rho}(kh,lh)\tilde{\rho}(lh,mh)\tilde{\rho}(mh,jh) - \rho(jh,kh)\rho(kh,lh)\rho(lh,mh)\rho(mh,jh) \Big]}\\
			&\leq C n^{\frac32}.
		\end{split}
	\end{equation}
	Denoting by  $x,y$ the same symbol as  in Proposition \ref{pro5} and $z=\tilde{\rho}(lh,mh) - \rho(lh,mh), w=\tilde{\rho}(mh,jh) - \rho(mh,jh)$, which implies that $J$ can be decomposed into fifteen summations:
	\begin{equation}\label{57a}
		\begin{split}
			J&=\Bigg|\sum_{j,k,l,m=1}^n \Big[ x\rho(kh,lh)\rho(lh,mh)\rho(mh,jh) + \rho(jh,kh)y\rho(lh,mh)\rho(mh,jh)\\
			& \qquad\qquad\quad+ \rho(jh,kh)\rho(kh,lh)z\rho(mh,jh) + \rho(jh,kh)\rho(kh,lh)\rho(lh,mh)w\\
			& \qquad\qquad\quad+xy\rho(lh,mh)\rho(mh,jh) + x\rho(kh,lh)z\rho(mh,jh) + x\rho(kh,lh)\rho(lh,mh)w\\
			& \qquad\qquad\quad + \rho(jh,kh)yz\rho(mh,jh) + \rho(jh,kh)y\rho(lh,mh)w + \rho(jh,kh)\rho(kh,lh)zw\\
			& \qquad\qquad\quad+ xyz\rho(mh,jh) + xy\rho(lh,mh)w + x\rho(kh,lh)zw + \rho(jh,kh)yzw +xyzw\Big]\Bigg|\\
			&:=\Bigg|\sum_{i=1}^{15}J_i\Bigg|.
		\end{split}
	\end{equation}
	We divide the fifteen summations into five groups and discuss them separately. The idea is to analyze each term $J_i$ by the different symmetry of the sum index in $J_i$. 
	
	Group 1. Lemma \ref{lem1} and the symmetry of $j,k$ imply that
	\begin{equation}\label{58}
		\begin{split}
			&\abs{J_1} = \abs{\sum_{j,k,l,m=1}^n x\rho(kh,lh)\rho(lh,mh)\rho(mh,jh)} \\
			&\leq C \sum_{j,k,l,m=1}^n \left( 1+(j \wedge k)  \right)^{2(H-1)} \cdot   \left( 1+\abs{k-l}  \right)^{2(H-1)} \cdot \left( 1+\abs{l-m}  \right)^{2(H-1)} \cdot \left( 1+\abs{m-j}  \right)^{2(H-1)}\\
			&\leq C \sum_{1\leq j \leq k \leq n,1 \leq l,m \leq n} \left( 1+j  \right)^{2(H-1)} \cdot \left( 1+\abs{k-l}  \right)^{2(H-1)} \cdot \left( 1+\abs{l-m}  \right)^{2(H-1)} \cdot \left( 1+\abs{m-j}  \right)^{2(H-1)}\\
			&\leq C, 
		\end{split}
	\end{equation}
	where in the last inequality we use  Lemma \ref{lem1a} with the condition $H\in(0,\frac12)$. Similarly, using symmetry and analogous arguments, we obtain:
	\begin{equation}\label{59}
		\begin{split}
			\abs{J_i} \leq C, \quad j=2,3,4.
		\end{split}
	\end{equation}
	
	Group 2. For the term $J_5$, noticing the symmetry of $j,l$ and utilizing  Lemma \ref{lem1}, we have 
	\begin{align}
		\notag &\abs{J_5} = \abs{\sum_{j,k,l,m=1}^n xy\rho(lh,mh)\rho(mh,jh)} \\
		\notag &\leq C \sum_{j,k,l,m=1}^n \left( 1+(j \wedge k)  \right)^{2(H-1)} \cdot   \left( 1+\abs{k-l}  \right)^{H-1} \cdot \left( 1+\abs{l-m}  \right)^{2(H-1)} \cdot \left( 1+\abs{m-j}  \right)^{2(H-1)}\\
		\notag &\leq C \Bigg[ \sum_{1\leq k \leq j \leq l \leq n,1 \leq m \leq n} \left( 1+k  \right)^{2(H-1)} \cdot \left( 1+ (l-k)  \right)^{H-1} \cdot \left( 1+\abs{l-m}  \right)^{2(H-1)} \cdot \left( 1+\abs{m-j}  \right)^{2(H-1)}\\
		\notag &+ \sum_{1\leq j \leq k \leq l \leq n,1 \leq m \leq n} \left( 1+j  \right)^{2(H-1)} \cdot \left( 1+ (l-k)  \right)^{H-1} \cdot \left( 1+\abs{l-m}  \right)^{2(H-1)} \cdot \left( 1+\abs{m-j}  \right)^{2(H-1)}\\
		\notag &+ \sum_{1\leq j \leq l \leq k \leq n,1 \leq m \leq n} \left( 1+j  \right)^{2(H-1)} \cdot \left( 1+ (k-l)  \right)^{H-1} \cdot \left( 1+\abs{l-m}  \right)^{2(H-1)} \cdot \left( 1+\abs{m-j}  \right)^{2(H-1)} \Bigg]\\
		&\leq C n^H, 
	\end{align}\label{60}
	where the last inequality is caused by Lemma \ref{lem1a} with the condition $H\in(0,\frac12)$. Notice that $J_i, j=7,8,10$ share the similar symmetry with $J_5$, which implies that
	\begin{equation}\label{61}
		\begin{split}
			\abs{J_i} \leq C n^H, \quad j=7,8,10.
		\end{split}
	\end{equation}
	
	Group 3. We estimate the term $J_6$ by the symmetry of $(j,k)$ and $(l,m)$ with  Lemma \ref{lem1},
	\begin{equation}\label{62}
		\begin{split}
			&\abs{J_6} = \abs{\sum_{j,k,l,m=1}^n x\rho(kh,lh)z\rho(mh,jh)} \\
			&\leq C  \sum_{j,k,l,m=1}^n \left( 1+(j \wedge k)  \right)^{2(H-1)} \cdot   \left( 1+\abs{k-l}  \right)^{2(H-1)} \cdot \left( 1+\abs{l-m}  \right)^{H-1} \cdot \left( 1+\abs{m-j}  \right)^{2(H-1)}\\
			&\leq C \Bigg[ \sum_{1\leq j \leq k  \leq n,1 \leq l \leq m \leq n} \left( 1+j  \right)^{2(H-1)} \cdot   \left( 1+\abs{k-l}  \right)^{2(H-1)} \cdot \left( 1+(m-l)  \right)^{H-1} \cdot \left( 1+\abs{m-j}  \right)^{2(H-1)}\\
			&+ \sum_{1\leq j \leq k  \leq n,1 \leq m \leq l \leq n} \left( 1+j  \right)^{2(H-1)} \cdot   \left( 1+\abs{k-l}  \right)^{2(H-1)} \cdot \left( 1+(l-m)  \right)^{H-1} \cdot \left( 1+\abs{m-j}  \right)^{2(H-1)}\Bigg]\\
			&\leq C n^H, 
		\end{split}
	\end{equation}
	where the last inequality is from Lemma \ref{lem1a} with the condition $H\in(0,\frac12)$.
	At the same time, $J_9$ has the symmetry of $(k,l)$ and $(m,j)$. By a similar way we obtain
	\begin{equation}\label{63}
		\begin{split}
			\abs{J_9} \leq C n^H.
		\end{split}
	\end{equation}
	
	Group 4. Applying the symmetry of $m,j$ and Lemma \ref{lem1} to $J_{11}$:
	\begin{align}\label{64}
		\notag &\abs{J_{11}} = \abs{\sum_{j,k,l,m=1}^n xyz\rho(mh,jh)} \\
		\notag &\leq C  \sum_{j,k,l,m=1}^n \left( 1+(j \wedge k)  \right)^{2(H-1)} \cdot   \left( 1+\abs{k-l}  \right)^{H-1} \cdot \left( 1+\abs{l-m}  \right)^{H-1} \cdot \left( 1+\abs{m-j}  \right)^{2(H-1)}\\
		\notag &\leq C \Bigg[ \sum_{1 \leq m \leq k \leq j  \leq n,1 \leq l \leq n} \left( 1+k  \right)^{2(H-1)} \cdot   \left( 1+\abs{k-l}  \right)^{H-1} \cdot \left( 1+\abs{l-m}  \right)^{H-1} \cdot \left( 1+(j-m)  \right)^{2(H-1)}\\
		\notag &+ \sum_{1 \leq m \leq j \leq k  \leq n,1 \leq l \leq n} \left( 1+j  \right)^{2(H-1)} \cdot   \left( 1+\abs{k-l}  \right)^{H-1} \cdot \left( 1+\abs{l-m}  \right)^{H-1} \cdot \left( 1+(j-m)  \right)^{2(H-1)}\\
		\notag &+ \sum_{1 \leq k \leq m \leq j  \leq n,1 \leq l \leq n} \left( 1+k  \right)^{2(H-1)} \cdot   \left( 1+\abs{k-l}  \right)^{H-1} \cdot \left( 1+\abs{l-m}  \right)^{H-1} \cdot \left( 1+(j-m)  \right)^{2(H-1)} \Bigg]\\
		&\leq C n^{2H}, 
	\end{align}
	where  the last inequality is due to Lemma \ref{lem1a} with the condition $H\in(0,\frac12)$. Similarly, we have
	\begin{equation}\label{65}
		\begin{split}
			\abs{J_i} \leq C n^{2H}, \quad i=12,13,14.
		\end{split}
	\end{equation}
	
	Group 5. We using the symmetry of $j,k,l,m$ and Lemma \ref{lem1} to get that
	\begin{equation}\label{66}
		\begin{split}
			&\abs{J_{15}} = \abs{\sum_{j,k,l,m=1}^n xyzw} \\
			&\leq C  \sum_{j,k,l,m=1}^n \left( 1+(j \wedge k)  \right)^{2(H-1)} \cdot   \left( 1+(k \wedge l)  \right)^{2(H-1)} \cdot \left( 1+(l \wedge m)  \right)^{2(H-1)} \cdot \left( 1+\abs{m-j}  \right)^{H-1}\\
			&\leq C  \sum_{1 \leq j \leq k \leq l \leq m  \leq n} \left( 1+j  \right)^{2(H-1)} \cdot   \left( 1+k  \right)^{2(H-1)} \cdot \left( 1+l  \right)^{2(H-1)} \cdot \left( 1+(m-j) \right)^{H-1}\\
			&\leq C n^{H}, 
		\end{split}
	\end{equation}
	where  the last inequality is from Lemma \ref{lem1a} with the condition $H\in(0,\frac12)$.
	In conclusion, we obtain 
	\begin{equation}\label{67}
		\begin{split}
			J \leq \sum_{i=1}^{15}\abs{J_i} \leq Cn^{2H} \leq Cn^{\frac32}.
		\end{split}
	\end{equation}
	This completes the proof.
\end{proof}

\subsection{\bf Proofs of main Theorems}\label{sec.sub.4.2}

{\bf Proof of Theorem \ref{th2}.}
Following the proof methodology of Theorem \ref{th1}, we take the random variable $X$ as  the   second moment of sample about Ornstein-Uhlenbeck model $\{Z_t:t \geq 0\}$ defined as in \eqref{ou2} with the discrete form:
\begin{equation}\label{A_n}
	X=A_n:=\frac1n\sum_{j=1}^nZ_{jh}^2.
\end{equation}
Lemma \ref{lem2}  is also a key tool for proving Berry-Ess\'een upper bound of $\hat{\theta}_{n}$ defined in   \eqref{theta hat2}, which implies that there exists a positive constant $C$ independent of $n$ such that for $n$ large enough
\begin{equation}\label{36}
	\begin{split}
		&d_{Kol}\left(\sqrt{n}(\hat{\theta}_{n}-\theta), \mathcal{N}\right)\\
		&\le  C\times \left(d_{Kol}(\sqrt{n}(A_n-\E[A_n]), \varpi) +\sqrt{n}\abs{  \E[A_n] - a}+\frac{1}{\sqrt{n}}\right),
	\end{split}
\end{equation}
where $a=g(\theta)= H\Gamma(2H)\theta^{-2H}$ and  $\mathcal{N}, \,\varpi$  are the same as in estimation \eqref{30}. Throughout this proof, we assume  $H\in(0,\frac12)$.

To estimate $\sqrt{n}\abs{  \E[A_n] - a}$, we first note from \eqref{35} that it suffices to prove that $\sqrt{n}\abs{  \E(B_n - A_n)} \leq C\times \frac1{\sqrt{n}}$. In fact, the inequality \eqref{ap3} in Lemma \ref{lem1} implies that
\begin{equation}\label{37}
	\begin{split}
		\sqrt{n}\abs{  \E(B_n - A_n)} &\leq \frac1{\sqrt{n}} \sum_{j=1}^n \abs{\mathbb{E}\left( Z_{jh}^2- X_{jh}^2 \right)} \leq \frac{C}{\sqrt{n}} \sum_{j=1}^n \left( 1 \wedge (jh)^{2(H-1)} \right)\\
		&\leq \frac{C}{\sqrt{n}} \sum_{j=1}^n \left( 1 + jh \right)^{2(H-1)} \leq C\times n^{(2H-1) \vee 0 -\frac12}=C\times\frac1{\sqrt{n}}.
	\end{split}
\end{equation}

Next, using arguments analogous to those in the estimation of \eqref{34} combined with Propositions \ref{pro4}, \ref{pro5}, \ref{pro6}, we derive that  for $H\in(0,\frac12)$,
\begin{equation}\label{37a}
	\begin{split}
		d_{Kol}(\sqrt{n}(A_n-\E[A_n]), \varpi) \leq C 
		\times \frac{1}{\sqrt{n}} .
	\end{split}
\end{equation}
Substituting the estimations \eqref{37}, \eqref{37a} into \eqref{36} implies the final Berry-Ess\'een upper bound
\begin{equation}\label{37b}
	d_{Kol}\left(\sqrt{n}(\hat{\theta}_{n}-\theta), \mathcal{N}\right) \leq C_{\theta,H,h}\times    
	\frac{1}{\sqrt{n}}.
\end{equation}
\hfill$\Box$\medskip

{\bf Proof of Theorem~\ref{th2-2}.} 
The distinction between Theorem~\ref{th2} and Theorem~\ref{th2-2}  lies in improving Hypothesis~\ref{hyp1} with $H\in(0,\frac12)$  to Hypothesis~\ref{hyp1-1} with $H\in(\frac12,\frac34)$, thereby accommodating broader Gaussian processes such as the bi-fractional Brownian motion and the sub-bifractional Brownian motion. Notice that the estimations in Lemma \ref{lem1} play a  key rule in the proof of  Theorem~\ref{th2}. To establish Theorem~\ref{th2-2}, it suffices to  build up a new comparison about  covariance functions ${\rho}(t,s)$ and  $\tilde{\rho}(t,s)$, which is presented in the following Proposition.
\begin{proposition}\label{pro7}
	Let  ${\rho}(t,s)=\mathbb{E}(X_tX_s)$,  $\tilde{\rho}(t,s)=\mathbb{E}(Z_tZ_s)$ be the  covariance  function of the Ornstein-Uhlenbeck processes $X_t$ and $Z_t$ driven by fBm $B_t^H$ and $G_t$ satisfying Hypothese \ref{hyp1-1} with $H\in(\frac12,1)$. Then there exists  a constant $C\geq0$ independent of $T$ such that for any $0\leq s \leq t \leq T$,
	\begin{equation}\label{68}
		\abs{\tilde{\rho}(t,s) - \rho(t,s)} \leq C \left(1 \wedge s^{2(H-1)} \wedge (t-s)^{2(H-1)}\right).
	\end{equation}
	Moreover, the difference of variance of $X_t$ and $Z_t$ satisfies
	\begin{equation}\label{69}
		\abs{\mathbb{E}[Z_t^2] -\mathbb{E}[X_t^2]} \leq C \left(1 \wedge t^{2(H-1)}\right).
	\end{equation}
\end{proposition}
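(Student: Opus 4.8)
The plan is to realize both covariance functions as inner products, in the Hilbert spaces $\FH_1$ and $\FH$ respectively, of one and the same explicit exponential kernel, and then to read off their difference from the comparison identity \eqref{innp fg3-zhicheng0-0} together with Hypothesis~\ref{hyp1-1}. Since $X_t=\int_0^t e^{-\theta(t-u)}\dif B^H_u$ as in \eqref{X_t} and, analogously, $Z_t=\int_0^t e^{-\theta(t-u)}\dif G_u$, setting $\varphi_t(u):=e^{-\theta(t-u)}\mathbbm{1}_{[0,t]}(u)$ --- a bounded function of bounded variation, hence in $\mathcal{V}_{[0,T]}$ --- It\^{o}'s isometry gives $\rho(t,s)=\langle\varphi_t,\varphi_s\rangle_{\FH_1}$ and $\tilde\rho(t,s)=\langle\varphi_t,\varphi_s\rangle_{\FH}$. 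First I would apply \eqref{innp fg3-zhicheng0-0} (valid under ($H_1$) and ($H_2$)), the support properties of $\varphi_t,\varphi_s$, and the symmetry of the covariances to obtain, for $0\le s\le t$,
\[
\tilde\rho(t,s)-\rho(t,s)=\int_0^t e^{-\theta(t-u)}\dif u\int_0^s e^{-\theta(s-v)}\,\frac{\partial}{\partial v}\!\left(\frac{\partial R(u,v)}{\partial u}-\frac{\partial R^B(u,v)}{\partial u}\right)\dif v .
\]

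Next I would simplify the right-hand side of ($H_3'$). Using the power-mean inequality $u^{2H'}+v^{2H'}\ge 2^{1-2H'}(u+v)^{2H'}$ (valid since $H'\ge\frac12$), the elementary bound $uv\le\frac14(u+v)^2$, and the monotonicity of $x\mapsto x^{K-2}$ (decreasing, as $K<2$) and of $x\mapsto x^{2H'-1}$ (non-decreasing, as $H'\ge\frac12$), one checks that
\[
(u^{2H'}+v^{2H'})^{K-2}(uv)^{2H'-1}\le C\,(u+v)^{2H'(K-2)+2(2H'-1)}=C\,(u+v)^{2H-2},
\]
so \eqref{phi2} collapses to the single clean bound $\bigl|\partial_v\bigl(\partial_u R(u,v)-\partial_u R^B(u,v)\bigr)\bigr|\le C(u+v)^{2H-2}$, with $C$ depending only on $H',K$. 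Hence
\[
\bigl|\tilde\rho(t,s)-\rho(t,s)\bigr|\le C\,\Phi(t,s),\qquad \Phi(t,s):=\int_0^t\!\!\int_0^s e^{-\theta(t-u)}e^{-\theta(s-v)}(u+v)^{2H-2}\,\dif v\,\dif u ,
\]
and, crucially, $2H-2\in(-1,0)$ because $H\in(\frac12,1)$.

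The heart of the matter is then the elementary estimate $\Phi(t,s)\le C_{\theta,H}(1+t)^{2H-2}$ for all $0\le s\le t$. For $t\le1$ this is immediate, since $\Phi(t,s)\le\int_0^1\!\int_0^1(u+v)^{2H-2}\dif v\,\dif u<\infty$ (finite precisely because $2H-2>-1$) while $(1+t)^{2H-2}$ is bounded below on $[0,1]$. For $t>1$ I would split $[0,t]=[0,\frac{t}{2}]\cup[\frac{t}{2},t]$: on $[\frac{t}{2},t]\times[0,s]$ one has $u+v\ge\frac{t}{2}\ge\frac14(1+t)$, whence $(u+v)^{2H-2}\le C(1+t)^{2H-2}$ while the two exponentials integrate to a constant; on $[0,\frac{t}{2}]\times[0,s]$ one has $e^{-\theta(t-u)}\le e^{-\theta t/2}$ and $\int_0^{t/2}\!\int_0^t(u+v)^{2H-2}\dif v\,\dif u\le C t^{2H}$ (using $(u+t)^{2H-1}-u^{2H-1}\le t^{2H-1}$), so this piece is $\le C t^{2H}e^{-\theta t/2}\le C(1+t)^{2H-2}$. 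Finally, $0\le s\le t$ gives $1+t\ge 1$, $1+t\ge s$ and $1+t\ge t-s$, so negativity of $2H-2$ yields $(1+t)^{2H-2}\le 1\wedge s^{2(H-1)}\wedge(t-s)^{2(H-1)}$; this proves \eqref{68}, and taking $s=t$ together with $(1+t)^{2H-2}\le 1\wedge t^{2(H-1)}$ proves \eqref{69}.

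The main obstacle is the estimate $\Phi(t,s)\le C(1+t)^{2H-2}$: the exponential weights must be exploited to confine $(u,v)$ to a neighbourhood of $(t,s)$, on which $(u+v)^{2H-2}\approx(t+s)^{2H-2}\le C(1+t)^{2H-2}$, while the integrable but origin-unbounded singularity of $(u+v)^{2H-2}$ --- which is exactly why the hypothesis $H>\frac12$ cannot be dispensed with here --- must be absorbed, on the complementary region, against the $e^{-\theta t/2}$ decay; the bookkeeping over the sub-rectangles is the only delicate point, everything else reducing to the two structural inputs \eqref{innp fg3-zhicheng0-0} and \eqref{phi2}. Note that the standing assumption $H'>\frac12$ (of which we only use $H'\ge\frac12$) enters precisely in collapsing the second term of \eqref{phi2} into $C(u+v)^{2H-2}$.
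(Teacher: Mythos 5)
Your proof is correct, and the estimation part takes a genuinely different (and somewhat cleaner) route than the paper's. Both arguments start from the comparison identity \eqref{innp fg3-zhicheng0-0} applied to the exponential kernels and then invoke ($H_3'$); but the paper keeps the two terms of \eqref{phi2} separate. For the variance \eqref{69} it uses $a+b\ge 2\sqrt{ab}$ in the \emph{opposite} direction to reduce both terms to the factorizable kernels $(uv)^{H-1}$ and $(uv)^{H'K-1}$ and then applies Lemma~\ref{lem1a1}, while for the off-diagonal bound \eqref{68} it decomposes $\tilde\rho(t,s)-\rho(t,s)$ via the identities (3.37) and (3.41) of \cite{CLZ24} into $e^{-\theta(t-s)}$ times the variance difference at time $s$ plus a double integral over $[0,s]\times[s,t]$, whose second piece is controlled by a change of variables and a L'H\^opital / dominated-convergence asymptotic argument. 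You instead collapse the second term of \eqref{phi2} into $C(u+v)^{2H-2}$ via the power-mean inequality and $uv\le\tfrac14(u+v)^2$ (this is exactly where $H'\ge\tfrac12$ enters, consistent with the paper's remark that this restriction is technical; the exponent arithmetic $2H'(K-2)+2(2H'-1)=2H-2$ checks out), and then prove the single uniform bound $\Phi(t,s)\le C(1+t)^{2H-2}$ by splitting $[0,t]$ at $t/2$. Since $(1+t)^{2H-2}\le 1\wedge s^{2(H-1)}\wedge(t-s)^{2(H-1)}$ for $0\le s\le t$ and $H<1$, this one estimate delivers \eqref{68} and \eqref{69} simultaneously, is in fact slightly stronger than the stated bounds, and avoids both the external reference and the L'H\^opital step. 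The auxiliary inequalities you rely on ($(u+t)^{2H-1}-u^{2H-1}\le t^{2H-1}$ by subadditivity of $x\mapsto x^{2H-1}$ for $2H-1\in(0,1)$, and $t^{2H}e^{-\theta t/2}\le C(1+t)^{2H-2}$ for $t>1$) are all valid, so the bookkeeping over the sub-rectangles goes through.
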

\begin{proof}
	For any $0\leq s \leq t \leq T$, according to the relationship \eqref{innp fg3-zhicheng0-0} between the inner products of two functions in the Hilbert spaces $\mathfrak{H}$ and $\mathfrak{H}_1$, we have 
	\begin{equation}\label{71}
		\abs{\mathbb{E}[Z_t^2] -\mathbb{E}[X_t^2]} \leq \int_0^te^{-\theta(t-u)} \dif u \int_0^te^{-\theta(t-v)} \abs{\frac{\partial^2R(u,v)}{\partial u \partial v} - \frac{\partial^2R^B(u,v)}{\partial u \partial v}} \dif v.
	\end{equation}
	At the same time,  Hypothesis~\ref{hyp1-1} with restriction $H\in(\frac12,1)$ implies that 
	\begin{equation}\label{73}
		\left| \frac{\partial}{\partial s}\left(\frac {\partial R(s,t)}{\partial t} - \frac {\partial R^{B}(s,t)}{\partial t}\right)\right| \le  C_1  (t+s)^{2H-2}+C_2 (s^{2H'}+t^{2H'})^{K-2}(st)^{2H'-1},
	\end{equation}
	where $H'\in (\frac12,1),\, K\in (0,2)$ and $H:=H'K\in (\frac12,1)$. Then, by the basic inequality $a+b\geq 2\sqrt{ab}$ and  Lemma \ref{lem1a1}, we obtain that
	\begin{equation}\label{74}
		\begin{split}
			\abs{\mathbb{E}[Z_t^2] -\mathbb{E}[X_t^2]} &\leq C_1 \int_0^te^{-\theta(t-u)} \dif u \int_0^te^{-\theta(t-v)} (u+v)^{2H-2} \dif v\\
			&+ C_2 \int_0^te^{-\theta(t-u)} \dif u \int_0^te^{-\theta(t-v)} (u^{2H'}+v^{2H'})^{K-2}(uv)^{2H'-1} \dif v\\
			&\leq C_1 \left[\int_0^te^{-\theta(t-u)} u^{H-1} \dif u \right]^2 + C_2 \left[\int_0^te^{-\theta(t-u)} u^{H'k-1} \dif u \right]^2\\
			&\leq C \left(1 \wedge t^{2(H-1)}\right),
		\end{split}
	\end{equation}
	which proves the inequality \eqref{71}. Furthermore, combining this result with the equations (3.37) and (3.41)  of \cite{CLZ24} yield 
	\begin{equation}\label{72}
		\begin{split}
			&\abs{\tilde{\rho}(t,s) - \rho(t,s)}\\
			&\leq C e^{-\theta (t-s)} \abs{\mathbb{E}[Z_s^2] -\mathbb{E}[X_s^2]} + \int_0^se^{-\theta(s-u)} \dif u \int_s^te^{-\theta(t-v)} \abs{\frac{\partial^2R(u,v)}{\partial u \partial v} - \frac{\partial^2R^B(u,v)}{\partial u \partial v}} \dif v\\
			&\leq C \left(1 \wedge s^{2(H-1)} \wedge (t-s)^{2(H-1)}\right) + \int_0^se^{-\theta(s-u)} \dif u \int_s^te^{-\theta(t-v)} \abs{\frac{\partial^2R(u,v)}{\partial u \partial v} - \frac{\partial^2R^B(u,v)}{\partial u \partial v}} \dif v
		\end{split}
	\end{equation} 
	where the last inequality is from the fact $ e^{-\theta (t-s)} \leq C \left(1 \wedge (t-s)^{2(H-1)}\right) $. 
	
	Next, we define a double integral as
	\begin{equation}\label{70}
		\begin{split}
			{\rm II} &:=\int_0^se^{-\theta(s-u)} \dif u \int_s^te^{-\theta(t-v)} \abs{\frac{\partial^2R(u,v)}{\partial u \partial v} - \frac{\partial^2R^B(u,v)}{\partial u \partial v}} \dif v\\
			&\leq C_1 \int_0^se^{-\theta(s-u)} \dif u \int_s^te^{-\theta(t-v)} (u+v)^{2H-2} \dif v\\
			&+ C_2 \int_0^se^{-\theta(s-u)} \dif u \int_s^te^{-\theta(t-v)} (u^{2H'}+v^{2H'})^{K-2}(uv)^{2H'-1} \dif v\\
			&:= {\rm II}_1 + {\rm II}_2,
		\end{split}
	\end{equation}
	under  Hypothesis~\ref{hyp1-1}. Let's estimate ${\rm II}$ in two parts based on the inequality \eqref{73}.\\
	Part 1. Making the change of variable $x=v-s$ implies that 
	\begin{equation}\label{75}
		\begin{split}
			{\rm II}_1 \leq C \int_0^se^{-\theta(s-u)} \dif u \int_0^{t-s}e^{-\theta((t-s)-x)} x^{2H-2} \dif x \leq C(t-s)^{2H-2} 
		\end{split}
	\end{equation}
	where in the last inequality we use Lemma \ref{lem1a1} with $H\in(\frac12,1)$. \\
	Part 2. Suppose that $H'\in (\frac12,1),\, K\in (0,2)$ and $H:=H'K\in (\frac12,1)$. We make the change of variable $x=v-s$ and use  the L'H\^{o}pital's rule to get that
	\begin{equation}\label{76}
		\begin{split}
			&\lim_{y\rightarrow \infty} \frac{{\rm II}_2}{y^{2(H'K-1)}}\\
			&\leq C \lim_{y\rightarrow \infty} \frac{\int_0^se^{-\theta(s-u)} u^{2H'-1} \dif u \int_0^{y}e^{\theta x} (u^{2H'}+(x+s)^{2H'})^{K-2}(x+s)^{2H'-1} \dif x}{y^{2(H'K-1)}e^{\theta y}}\\
			&=  C \lim_{y\rightarrow \infty} \frac{\int_0^se^{-\theta(s-u)} u^{2H'-1} \dif u  (u^{2H'}+(y+s)^{2H'})^{K-2}(y+s)^{2H'-1} }{\theta y^{2(H'K-1)} + 2(H'K-1)y^{2(H'K-1)-1}}\\
			&=  C \lim_{y\rightarrow \infty} \int_0^se^{-\theta(s-u)} (\frac{u}y)^{2H'-1}   \left[(\frac{u}y)^{2H'}+(1+\frac{s}y)^{2H'}\right]^{K-2} \dif u  \frac{(1+\frac{s}y)^{2H'-1} }{\theta  + 2(H'K-1)y^{-1}},
		\end{split}
	\end{equation}
	where the last equality is from $2(H'K-1)=2H'(K-2)+2(2H'-1)$. Notice that 
	\begin{equation}\label{77}
		\begin{split}
			\lim_{y\rightarrow \infty}   \frac{(1+\frac{s}y)^{2H'-1} }{\theta  + 2(H'K-1)y^{-1}} = \frac1{\theta}.
		\end{split}
	\end{equation}
	And then, because $u\in(0,s)$, $s$ is fixed, choosing $y>s$, we have 
	$$ (\frac{u}y)^{2H'-1}   \left[(\frac{u}y)^{2H'}+(1+\frac{s}y)^{2H'}\right]^{K-2} \leq 1.$$
	The Lebesgue dominated convergence theorem yields
	\begin{equation}\label{78}
		\begin{split}
			\lim_{y\rightarrow \infty} \int_0^se^{-\theta(s-u)} (\frac{u}y)^{2H'-1}   \left[(\frac{u}y)^{2H'}+(1+\frac{s}y)^{2H'}\right]^{K-2} \dif u  =0,
		\end{split}
	\end{equation}
	with the condition $H'\in (\frac12,1)$. Consequently, we have 
	\begin{equation}\label{79}
		\begin{split}
			\lim_{y\rightarrow \infty} \frac{{\rm II}_2}{y^{2(H'K-1)}} <+\infty,
		\end{split}
	\end{equation}
	which means that
	\begin{equation}\label{80}
		\begin{split}
			{\rm II}_2 \leq C(t-s)^{2(H'K-1)} = C (t-s)^{2(H-1)}.
		\end{split}
	\end{equation}
	In conclusion, we obtain that
	\begin{equation}\label{81}
		\abs{\tilde{\rho}(t,s) - \rho(t,s)} \leq C \left(1 \wedge s^{2(H-1)} \wedge (t-s)^{2(H-1)}\right).
	\end{equation}
\end{proof}

Based on the results  of Proposition \ref{pro7}, the conclusion of Theorem~\ref{th2-2}  can be readily verified via an approach parallel to that of Theorem~\ref{th2}. This completes the proof.
\hfill$\Box$\medskip

\section{Appendix}\label{sec appendix}
\setcounter{equation}{0}

We have been used the following technical inequalities repeatedly throughout the paper, which is cited from Chen et al.  \cite{CLSG,CDL24,CLZ24}.
	\begin{lemma}\label{lem1a1}
		Assume $\beta>-1$, $\theta>0$ and  two functions with form
		\begin{equation}\label{appendix1}
			\begin{split}
				A_1(t)=\int_0^te^{-\theta x}x^{\beta}\dif x, \qquad A_2(t)=\int_0^te^{-\theta (t-x)}x^{\beta}\dif x,
			\end{split}
		\end{equation} 
		then there exist a positive constant $C$ such that for any $s\in\left[0,\infty\right)$,
		\begin{equation}\label{appendix2}
			\begin{split}
				A_1(t)\leq C(t^{\beta+1}{\bf1}_{[0,1]}(t)+{\bf1}_{(1,\infty)}(t))\leq C(1\wedge t^{\beta+1}),
			\end{split}
		\end{equation}
		\begin{equation}\label{appendix3}
			\begin{split}
				A_2(t)\leq C(t^{\beta+1}{\bf1}_{[0,1]}(t)+t^{\beta}{\bf1}_{(1,\infty)}(t))\leq C(t^{\beta}\wedge t^{\beta+1}).
			\end{split}
		\end{equation}
		In particular, if $\beta\in(-1,0)$, then there exist a positive constant $C$ such that for any $s\in\left[0,\infty\right)$,
		\begin{equation}\label{appendix4}
			\begin{split}
				A_2(t)\leq C(1\wedge t^{\beta}).
			\end{split}
		\end{equation}
	\end{lemma}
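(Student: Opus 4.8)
The plan is to treat $A_1$ and $A_2$ separately, and for each to split the range of $t$ at $t=1$: on $[0,1]$ one uses only the crude bound on the exponential factor, and on $(1,\infty)$ one exploits either the integrability of $e^{-\theta x}x^{\beta}$ over the whole half-line (for $A_1$) or a dyadic splitting of the interval of integration (for $A_2$). The hypothesis $\beta>-1$ enters precisely to make $x^{\beta}$ integrable at the origin, and $\theta>0$ to control the exponential tails.

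For $A_1$: if $t\in[0,1]$ then $e^{-\theta x}\le 1$ on $[0,t]$, so $A_1(t)\le\int_0^t x^{\beta}\,\dif x=\frac{t^{\beta+1}}{\beta+1}$; if $t>1$ then $A_1(t)\le\int_0^{\infty}e^{-\theta x}x^{\beta}\,\dif x=\theta^{-(\beta+1)}\Gamma(\beta+1)$, a finite constant independent of $t$. This gives the first inequality in \eqref{appendix2}, and the second follows because $\beta+1>0$ makes $t^{\beta+1}\le 1$ on $[0,1]$ and $t^{\beta+1}\ge 1$ on $(1,\infty)$, so $t^{\beta+1}\mathbf{1}_{[0,1]}(t)+\mathbf{1}_{(1,\infty)}(t)=1\wedge t^{\beta+1}$.

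For $A_2$: if $t\in[0,1]$ then $e^{-\theta(t-x)}\le 1$ on $[0,t]$, so again $A_2(t)\le\frac{t^{\beta+1}}{\beta+1}$. If $t>1$ I would split $A_2(t)=\int_0^{t/2}e^{-\theta(t-x)}x^{\beta}\,\dif x+\int_{t/2}^{t}e^{-\theta(t-x)}x^{\beta}\,\dif x$. On $[0,t/2]$ one has $e^{-\theta(t-x)}\le e^{-\theta t/2}$, so that part is at most $e^{-\theta t/2}\frac{(t/2)^{\beta+1}}{\beta+1}\le C t^{\beta}$ for $t>1$, since $e^{-\theta t/2}t$ is bounded on $[1,\infty)$. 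On $[t/2,t]$ one has $t/2\le x\le t$, hence $x^{\beta}\le 2^{\abs{\beta}}t^{\beta}$, while $\int_{t/2}^{t}e^{-\theta(t-x)}\,\dif x\le\theta^{-1}$, so that part is at most $C t^{\beta}$ as well. Combining the two ranges of $t$ gives the first inequality in \eqref{appendix3}, and the second holds because $t^{\beta+1}\le t^{\beta}$ on $[0,1]$ and $t^{\beta+1}\ge t^{\beta}$ on $(1,\infty)$, so $t^{\beta+1}\mathbf{1}_{[0,1]}(t)+t^{\beta}\mathbf{1}_{(1,\infty)}(t)=t^{\beta}\wedge t^{\beta+1}$.

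Finally, for the refinement \eqref{appendix4} with $\beta\in(-1,0)$: on $[0,1]$ the bound just proved gives $A_2(t)\le C t^{\beta+1}\le C$, and since $t^{\beta}\ge 1$ there, $A_2(t)\le C=C(1\wedge t^{\beta})$; on $(1,\infty)$ one has $t^{\beta}<1$, so $A_2(t)\le C t^{\beta}=C(1\wedge t^{\beta})$. The only step needing any care is the $t>1$ estimate for $A_2$ when $\beta\in(-1,0)$ (for $\beta\ge 0$ the single monotone bound $x^{\beta}\le t^{\beta}$ on $[0,t]$ already suffices): there a uniform bound on $e^{-\theta(t-x)}$ over all of $[0,t]$ yields only the rate $t^{\beta+1}$, which is too large for $t>1$, so one must isolate the exponentially small contribution of $[0,t/2]$ from the polynomially controlled contribution of $[t/2,t]$.
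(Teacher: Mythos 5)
Your proof is correct. Note that the paper itself gives no proof of this lemma -- it is stated in the appendix as a fact cited from Chen et al.\ \cite{CLSG,CDL24,CLZ24} -- so there is no in-paper argument to compare against; your elementary verification is exactly the standard one. All the steps check out: the case split at $t=1$, the use of $\beta>-1$ for integrability at the origin, the identity $t^{\beta+1}\mathbf{1}_{[0,1]}(t)+\mathbf{1}_{(1,\infty)}(t)=1\wedge t^{\beta+1}$ (and its analogue for $A_2$), and in particular the only delicate point, namely the bound $A_2(t)\leq Ct^{\beta}$ for $t>1$, which you obtain correctly by splitting $[0,t]$ at $t/2$ and using $e^{-\theta t/2}(t/2)^{\beta+1}\leq C t^{\beta}$ on the first piece and $x^{\beta}\leq 2^{\abs{\beta}}t^{\beta}$ together with $\int_{t/2}^{t}e^{-\theta(t-x)}\,\dif x\leq\theta^{-1}$ on the second.
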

	
	\begin{lemma}\label{lem1}
		Denote 
		$${\rho}(t,s)=\mathbb{E}(X_tX_s), \quad \tilde{\rho}(t,s)=\mathbb{E}(Z_tZ_s)$$
		by the  covariance  function of the Ornstein-Uhlenbeck processes $X_t$ and $Z_t$ driven by fBm $B_t^H$ and $G_t$ satisfying Hypothese \ref{hyp1}. Then there exists a positive constant $C$ independent of $T$ such that for any $0\leq s \leq t \leq T$,
		\begin{equation}\label{ap1}
			\abs{\rho(t,s)} \leq C \left(1 \wedge (t-s)^{2(H-1)}\right)\leq C \left(1+ (t-s)\right)^{2(H-1)},	
		\end{equation}
		\begin{equation}\label{ap2}
			\abs{\tilde{\rho}(t,s) - \rho(t,s)} \leq C \left(1 \wedge s^{2(H-1)} \wedge (t-s)^{H-1}\right).
		\end{equation}
		Moreover, the difference of variance of $X_t$ and $Z_t$ satisfies
		\begin{equation}\label{ap3}
			\abs{\mathbb{E}[Z_t^2] -\mathbb{E}[X_t^2]} \leq C \left(1 \wedge s^{2(H-1)}\right) \leq C \left(1+ s\right)^{2(H-1)}.
		\end{equation}
	\end{lemma}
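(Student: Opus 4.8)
The plan is to reduce all three bounds to the Wiener-integral representations $X_t=\int_0^t e^{-\theta(t-u)}\,\dif B^H_u$ and $Z_t=\int_0^t e^{-\theta(t-u)}\,\dif G_u$: writing $\phi_t(u)=e^{-\theta(t-u)}\mathbbm{1}_{[0,t]}(u)$, one has $\rho(t,s)=\langle\phi_t,\phi_s\rangle_{\FH_1}$ and $\tilde\rho(t,s)=\langle\phi_t,\phi_s\rangle_{\FH}$, so that \eqref{ap1}--\eqref{ap3} become quantitative estimates of these inner products. The only tools needed are the decay of the stationary covariance, the inner-product identities \eqref{innp fg3-zhicheng0-0}--\eqref{innp fg3-zhicheng0}, Hypothesis~\ref{hyp1}, and the elementary bounds of Lemma~\ref{lem1a1}; one also uses throughout the equivalence $1\wedge x^{2(H-1)}\asymp(1+x)^{2(H-1)}$ for $x\ge0$ (since $2(H-1)<0$), which reconciles the two displayed forms in \eqref{ap1} and \eqref{ap3}.

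For \eqref{ap1} I would introduce the stationary solution $Y_t$, use $X_t=Y_t-e^{-\theta t}Y_0$ to expand $\rho(t,s)$ into the four terms $\rho_0(t-s)$, $-e^{-\theta s}\rho_0(t)$, $-e^{-\theta t}\rho_0(s)$, $e^{-\theta(t+s)}a$, and insert $|\rho_0(r)|\le C(1+r)^{2H-2}$ (Cheridito et al.) together with $\mathbb{E}[Y_0^2]=a$. For $0\le s\le t$ each term is $\le C(1+(t-s))^{2(H-1)}$, using $e^{-\theta s},e^{-\theta t}\le 1$, $(1+t)^{2H-2}\le(1+(t-s))^{2H-2}$, and $e^{-\theta t}\le e^{-\theta(t-s)}\le C(1+(t-s))^{2H-2}$ (exponential beats polynomial); combined with $\sup_t\mathbb{E}[X_t^2]<\infty$ (so $|\rho(t,s)|\le C$) this produces the $1\wedge(t-s)^{2(H-1)}$ form.

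For \eqref{ap3} I would apply \eqref{innp fg3-zhicheng0-0} with $f=g=\phi_t$ to write $\mathbb{E}[Z_t^2]-\mathbb{E}[X_t^2]=\int_0^t e^{-\theta(t-u)}\dif u\int_0^t e^{-\theta(t-v)}\,\partial_v\big(\partial_uR-\partial_uR^B\big)\dif v$, bound the kernel by $C(uv)^{H-1}$ through $(H_3)$, factorize the integral into $\big(\int_0^t e^{-\theta(t-u)}u^{H-1}\dif u\big)^2$, and invoke Lemma~\ref{lem1a1}\eqref{appendix4} with $\beta=H-1\in(-1,0)$ to get $\le C(1\wedge t^{H-1})^2=C(1\wedge t^{2(H-1)})$. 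For \eqref{ap2} I would use the splitting $Z_t=e^{-\theta(t-s)}Z_s+\int_s^t e^{-\theta(t-u)}\dif G_u$ (and the same for $X_t$); since $[0,s]$ and $[s,t]$ meet in a null set, \eqref{innp fg3-zhicheng0} gives $\tilde\rho(t,s)-\rho(t,s)=e^{-\theta(t-s)}\big(\mathbb{E}[Z_s^2]-\mathbb{E}[X_s^2]\big)+\int_0^s e^{-\theta(s-u)}\dif u\int_s^t e^{-\theta(t-v)}\,\partial^2(R-R^B)\dif v$. The boundary term is controlled by \eqref{ap3} and $e^{-\theta(t-s)}\le C(1\wedge(t-s)^{H-1})$; the double integral, after using $(H_3)$, is $\le C\,I_1I_2$ with $I_1=\int_0^s e^{-\theta(s-u)}u^{H-1}\dif u$ and $I_2=\int_s^t e^{-\theta(t-v)}v^{H-1}\dif v$. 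One then checks $I_1\le C(s^H\wedge s^{H-1})$ (drop the exponential for small $s$; Lemma~\ref{lem1a1}\eqref{appendix4} for large $s$) and, bounding $v^{H-1}\le s^{H-1}$ on $[s,t]$ and substituting $w=v-s$, $I_2\le C\min\big(s^{H-1}(1\wedge(t-s)),\,1\wedge(t-s)^{H-1}\big)$; multiplying and splitting into the cases $s\le1$, $\{s>1,\,t-s\le1\}$, $\{s>1,\,t-s>1\}$ reproduces $1\wedge s^{2(H-1)}\wedge(t-s)^{H-1}$.

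I expect the delicate point to be this last step — extracting the sharp three-way minimum in \eqref{ap2}. One must trade the $s$-decay (upgrading $I_1$ from an $s^{H-1}$ rate to an effective $s^{2(H-1)}$ rate when $s\ge1$, and from a $u^{H-1}$ blow-up to boundedness when $s<1$) against the $(t-s)$-decay carried by $I_2$ and by the shift $w=v-s$, and it is precisely this interplay that forces the case distinctions above. The remaining ingredients — \eqref{ap1}, \eqref{ap3}, and the Markov-type splitting for \eqref{ap2} — are routine assemblies of the inner-product formulas and Lemma~\ref{lem1a1}.
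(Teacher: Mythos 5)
Your argument is correct and is essentially the approach the paper itself takes: the lemma is only cited here (from Chen et al.), but your decomposition of $\tilde\rho-\rho$ into the boundary term $e^{-\theta(t-s)}\bigl(\mathbb{E}[Z_s^2]-\mathbb{E}[X_s^2]\bigr)$ plus the double integral of $\partial^2(R-R^B)$ over $[0,s]\times[s,t]$, controlled via Hypothesis~($H_3$) and Lemma~\ref{lem1a1}, is exactly the scheme used in the paper's proof of Proposition~\ref{pro7} (the Hypothesis~\ref{hyp1-1} analogue), and your stationary-solution expansion for \eqref{ap1} matches the computation in \eqref{11}. The only discrepancy is that you read the right-hand side of \eqref{ap3} as $1\wedge t^{2(H-1)}$ rather than the printed $1\wedge s^{2(H-1)}$, which is the correct reading (the printed $s$ is a typo, as the application in \eqref{37} confirms).
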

	
	\begin{lemma}\label{lem1a}
		If $r\in\mathbb{N}:=\{1,2,\cdots\}$ is large enough and $v_1,\cdots,v_l$ are positive, then there exists a positive constant $C$ depending on $v_1,\cdots,v_l$ such that
		\begin{equation}\label{ap4}
			\sum_{r_i\in\mathbb{N},\sum_{i=1}^lr_i<r}r_1^{v_1-1}r_2^{v_2-1}\cdots r_l^{v_l-1} \leq C\times r^{\sum_{i=1}^lv_i}.
		\end{equation}
		At the same time, if $r\in\mathbb{N}:=\{1,2,\cdots\}$ is large enough and $v_1,\cdots,v_l$ are negative, then there exists a positive constant $C$ depending on $v_1,\cdots,v_l$ such that
		\begin{equation}\label{ap5}
			\sum_{r_i\in\mathbb{N},\sum_{i=1}^lr_i<r}r_1^{v_1-1}r_2^{v_2-1}\cdots r_l^{v_l-1} \leq C<\infty.
		\end{equation}
	\end{lemma}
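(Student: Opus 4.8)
The plan is to reduce both assertions to the elementary one--dimensional bound $\sum_{k=1}^{m}k^{a-1}\le C_a\,m^{a}$, valid for all $a>0$ and $m\ge1$; its proof splits into the case $a\ge1$, where each summand is at most $m^{a-1}$ so the sum is bounded by $m\cdot m^{a-1}=m^{a}$, and the case $0<a<1$, where $k^{a-1}$ is decreasing and one compares with $1+\int_{1}^{m}x^{a-1}\dif x\le(1+a^{-1})m^{a}$. Granting this, the second assertion is immediate and the first follows by induction on $l$.

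For the negative case, each $v_i-1<-1$, so $\sum_{k\ge1}k^{v_i-1}$ is a convergent $p$--series; since deleting the constraint $\sum_i r_i<r$ only enlarges the sum,
\[
\sum_{r_i\in\mathbb{N},\,\sum_i r_i<r}r_1^{v_1-1}\cdots r_l^{v_l-1}\le\prod_{i=1}^{l}\Big(\sum_{k=1}^{\infty}k^{v_i-1}\Big)=:C<\infty,
\]
with $C$ depending only on $v_1,\dots,v_l$, which is \eqref{ap5}.

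For \eqref{ap4}, write $\sigma_j=v_1+\dots+v_j$ and $S_l(r)=\sum_{r_i\in\mathbb{N},\,\sum_i r_i<r}r_1^{v_1-1}\cdots r_l^{v_l-1}$. The base case $l=1$ is the one--dimensional bound with $m=r-1\le r$. For the inductive step I would fix the last index $r_l=k$: since the remaining $l-1$ indices are each at least $1$, $k$ runs over $1\le k\le r-l$ and $S_l(r)=\sum_{k=1}^{r-l}k^{v_l-1}S_{l-1}(r-k)$. The induction hypothesis gives $S_{l-1}(m)\le C\,m^{\sigma_{l-1}}$ for every $m\ge1$ (trivially $S_{l-1}(m)=0$ when $m\le l-1$), and since $\sigma_{l-1}>0$ and $r-k\le r$ one may crudely bound $S_{l-1}(r-k)\le C\,r^{\sigma_{l-1}}$; factoring this out and applying the one--dimensional bound once more yields $S_l(r)\le C\,r^{\sigma_{l-1}}\sum_{k=1}^{r}k^{v_l-1}\le C\,r^{\sigma_{l-1}+v_l}=C\,r^{\sigma_l}$, with a constant depending only on $l$ and $v_1,\dots,v_l$. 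Taking $r$ large (i.e.\ $r>l$) is needed only so that the index set is nonempty.

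I do not anticipate a genuine obstacle here; the argument is pure bookkeeping. The two places that need a little attention are getting the one--dimensional bound uniformly over both ranges of the exponent, and noticing that the crude estimate $S_{l-1}(r-k)\le C\,r^{\sigma_{l-1}}$ already suffices and one never needs the sharper discrete Beta--type asymptotics $\sum_{k=1}^{r-1}k^{a-1}(r-k)^{b-1}\asymp r^{a+b-1}$ --- this works precisely because the induction hands down the exponent $\sigma_{l-1}$ (not $\sigma_{l-1}-1$) on the factor $(r-k)$. One should also check that every constant introduced along the way remains independent of $r$.
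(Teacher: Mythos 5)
Your proof is correct. Note that the paper itself does not prove this lemma --- it is stated in the Appendix and cited from external references (\cite{CLSG,CDL24,CLZ24}) --- so there is no in-paper argument to compare against; your self-contained proof fills that gap. The two pillars you use are exactly right: for negative $v_i$ each exponent $v_i-1<-1$ gives a convergent $p$-series and dropping the simplex constraint factors the sum into a finite product; for positive $v_i$ the induction on $l$ works because the crude bound $S_{l-1}(r-k)\le C\,r^{\sigma_{l-1}}$ (legitimate since $\sigma_{l-1}>0$) reduces everything to the elementary estimate $\sum_{k=1}^{m}k^{a-1}\le C_a m^{a}$ for $a>0$, whose two-case verification you state correctly. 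All constants depend only on $l$ and the $v_i$, as required.
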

	
	Next, we provide a detailed proof of the key Lemma~\ref{lem2} of this paper.\\
	{\bf Proof of Lemma~\ref{lem2}.} 
	Without loss of the generality, we assume $\sigma_1=1$ and generalize  the function  $f(x)=x^{-\frac{1}{\alpha}}, \, 0<\alpha<2$. Then we have $\xi\sim N(0,1)$ and  $\Phi(z)$ is its cumulative distribution function.  Thus, we have
	\begin{equation}\label{ap6}
		\begin{split}
			d_{Kol}(\sqrt{T}(f(X)-\theta) ,\, \xi)= \sup_{z\in \Rnum}\abs{P\left({\sqrt{T}}(f(X)-\theta)\le z\right)-\Phi( z)}			
		\end{split}
	\end{equation}
	Denote 
	\begin{equation}\label{ap7}
		\begin{split}
			A(z):=P\left({\sqrt{T}}(f(X)-\theta)\le z\right)-\Phi( z).
		\end{split}
	\end{equation}
	Since $X\ge 0$ almost surely and  $f(x)=x^{-\frac{1}{\alpha}}, \, 0<\alpha<2$, we have $f(X)> 0$ a.s. Hence, we shall assume $z>- \sqrt{T} \theta$. Otherwise, the standard estimate for a normal random variable $ \Phi(-t)\le \frac{1}{2t},\,\forall t>0$ yields 
	$$|A|=\Phi( z) \leq \Phi\big(-\sqrt{ T}\theta\big) \le\frac{C}{\sqrt{T}} \,.$$ 
	
	When $z>- \sqrt{T} \theta$, we have $\frac{z}{\sqrt{T}} + \theta>0$. Hence, the monotonicity of the function $f(x)=x^{-\frac{1}{\alpha}}$ implies that 
	$$\set{ X^{-\frac1{\alpha}} - \theta \leq \frac{z}{\sqrt{T}}}=\set{ X -\theta^{-\alpha} \geq \left( \frac{z}{\sqrt{T}} + \theta \right)^{-\alpha} - \theta^{-\alpha} }.$$
	Recall that $g(\theta)=\theta^{-\alpha}$. We have when $z>- \sqrt{T} \theta$, 
	\begin{equation*}\label{ap8}
		\begin{split}
			A(z)&=P\left(  X^{-\frac1{\alpha}} - \theta \leq \frac{z}{\sqrt{T}}  \right)-\Phi(z)\\
			&=P\left( X - \theta^{-\alpha} \geq \left( \frac{z}{\sqrt{T}} + \theta \right)^{-\alpha} - \theta^{-\alpha}   \right)-\Phi(z)\\
			&=P\left(\sqrt{\frac{T}{\sigma_2^2}} \left(X - g(\theta) \right) \geq \sqrt{\frac{T}{\sigma_2^2}}  \theta^{-\alpha} \left[ \left( \frac{z}{\sqrt{T}\theta} + 1 \right)^{-\alpha} - 1 \right] \right)-\Phi(z)\\
			&=P\left(\sqrt{\frac{T}{\sigma_2^2}} \left(X - g(\theta) \right) \geq \frac{\sqrt{T} \theta}{\alpha } \left[ \left( \frac{z}{\sqrt{T}\theta} + 1 \right)^{-\alpha} - 1 \right] \right)-\Phi(z), 		
		\end{split}
	\end{equation*}
	where in the last line, we use \eqref{fcha relat}  and $g'(\theta)=-\alpha\theta^{-\alpha-1}$. Next, we take the short-hand notion $\bar{\Phi}(z)=1-\Phi(z)$ and 
	$$\nu= \frac{\sqrt{T} \theta}{\alpha } \left[ \left( \frac{z}{\sqrt{T}\theta} + 1 \right)^{-\alpha} - 1 \right].$$
	It is clear that 
	\begin{align*}
		\set{\sqrt{\frac{T}{\sigma_2^2}} \left(X - g(\theta) \right) \geq \nu}=\set{\sqrt{\frac{T}{\sigma_2^2}} \left(X - \mathbb{E}[X] \right) \geq \nu - \sqrt{\frac{T}{\sigma_2^2}} \left( \mathbb{E}[X] - g(\theta) \right)}.
	\end{align*} Denote  $\bar{\Phi}(z)=1-\Phi(z)$.
	Hence,  the triangle inequality implies that
	\begin{equation}\label{ap9}
		\begin{split}
			\abs{A(z)}
			&\leq  \abs{P\left(\sqrt{\frac{T}{\sigma_2^2}} \left(X - \mathbb{E}[X] \right) \geq \nu - \sqrt{\frac{T}{\sigma_2^2}} \left( \mathbb{E}[X] - g(\theta) \right) \right) - \bar{\Phi} \left( \nu - \sqrt{\frac{T}{\sigma_2^2}} \left( \mathbb{E}[X] - g(\theta) \right) \right) }\\
			&\quad+ \abs{ \bar{\Phi} \left( \nu - \sqrt{\frac{T}{\sigma_2^2}} \left( \mathbb{E}[X] - g(\theta) \right) \right) - \bar{\Phi} \left( \nu  \right)  } + \abs{ \bar{\Phi} \left( \nu  \right) - \Phi(z) }\\
			&\le d_{Kol}(\sqrt{\frac{T}{\sigma_2^2}} \left(X - \mathbb{E}[X] \right),\,\xi )+\sqrt{\frac{T}{\sigma_2^2}} \abs{ \mathbb{E}[X] - g(\theta) }+ \abs{ \bar{\Phi} \left( \nu  \right) - \Phi(z) }\\
			&\leq C \Bigg( d_{Kol}(\sqrt{T}(X-\E[X]), \eta) +\sqrt{T}\abs{  \E[X] - g(\theta)}+\frac{1}{\sqrt{T}}\Bigg),
		\end{split}
	\end{equation}
	where in the last two inequalities we use  the standard estimate for the tail of a normal random variable, $\abs{\bar{\Phi}(z_1)-\bar{\Phi}(z_2) }\le \abs{z_1-z_2}$, and Lemma 5.4 of \cite{CZ21}. Subscribing the inequality  \eqref{ap9} into   \eqref{ap6} yields  \eqref{mubiao}.
	
	The following it taken from Lemma 5.4 of \cite{CZ21}.	
	
	\begin{lemma}\label{bound lem 54}
		Let $c>0$ be a constant. Denote $\nu(z)=  \frac{c}{ 2\beta}  \sqrt{T}  \Big[ \big( 1+\frac{z }{c \sqrt{T}}\big)^{-2\beta}-1 \Big], \, 0<\beta<1$, when $z> -c \sqrt{T}$ and $ \bar{\Phi}(z)=1-P(Z\le z)$. Then there exists some positive number $C$ independent of $T$ such that 
		\begin{align*}
			\sup_{z> -c \sqrt{T}}\abs{  \bar{\Phi}(\nu) -\Phi (z)}\le \frac{C}{\sqrt{T}}.
		\end{align*}
	\end{lemma}
	\begin{proof} We follow the line of the proof of Theorem 3.2  in \cite{SV 18}.
		By the mean value theorem, there exists some number $\eta(z)\in (0,1)$ such that 
		\begin{align*}
			\nu=- z \big( 1+\frac{z \eta}{c \sqrt{T}}\big)^{-2\beta-1} .
		\end{align*}Hence, 
		\begin{align*}
			\abs{  \bar{\Phi}(\nu) -\Phi (z)}&=\abs{  {\Phi}\Big( \big( 1+\frac{z \eta}{c \sqrt{T}}\big)^{-2\beta-1}\cdot z\Big) -\Phi (z)}\\
			&=\frac{1}{\sqrt{2\pi}}  {\int^{z}_{ z \big( 1+\frac{z \eta}{c \sqrt{T}}\big)^{-2\beta-1} } \,e^{-\frac{t^2}{2}}\dif t} \,.
		\end{align*}
		When $z\in (-c \sqrt{T}, -\frac12 c \sqrt{T}]$, it is obvious that
		\begin{align*}
			\frac{1}{\sqrt{2\pi}}  {\int_{ z \big( 1+\frac{z \eta}{c \sqrt{T}}\big)^{-2\beta-1}  }^{z} \,e^{-\frac{t^2}{2}}\dif t}\le \Phi(-\frac12 c \sqrt{T})\le \frac{C}{\sqrt{T}}.
		\end{align*}
		When $z\in( -\frac12 c \sqrt{T},\,0 ]$, we have
		\begin{align}\label{eqn. lem 54}
			\frac{1}{\sqrt{2\pi}}  {\int_{z \big( 1+\frac{z \eta}{c \sqrt{T}}\big)^{-2\beta-1}  }^{z} \,e^{-\frac{t^2}{2}}\dif t}&\le \abs{z} e^{-\frac{z^2}{2}}\Big(  \big( 1+\frac{z \eta}{c \sqrt{T}}\big)^{-2\beta-1} -1\Big) .
		\end{align}The mean value theorem implies that there exists some number $\eta'\in (0,1)$ such that
		\begin{align*}
			\big( 1+\frac{z \eta}{c \sqrt{T}}\big)^{-2\beta-1} -1= (-1-2\beta) \frac{z\eta}{c\sqrt{T}} \big( 1+\frac{z \eta\eta'}{c \sqrt{T}}\big)^{-2\beta-2}\le (1+2\beta) \left(\frac12\right)^{-2\beta-2}\frac{\abs{z}}{c\sqrt{T}}.
		\end{align*} 
		Substituting the above inequality into (\ref{eqn. lem 54}), and 
		since the function $f(z)= {z}^2 e^{-\frac{z^2}{2}}$ is uniformly bounded, we have that when $z\in( -\frac12 c \sqrt{T},\,0]$, 
		\begin{align*}
			\frac{1}{\sqrt{2\pi}}  {\int_{z \big( 1+\frac{z \eta}{c \sqrt{T}}\big)^{-2\beta-1}  }^{z} \,e^{-\frac{t^2}{2}}\dif t}&\le \frac{C}{\sqrt{T}}.
		\end{align*}
		Denote $a=\sup\set{z^2e^{-\frac{z^2}{2}}:\,z\in\Rnum}$ and denote the domain $$D:=\set{(s,z)\in \Rnum_{+}^2:\, \frac{1}{z} \big( 1+\frac{z \eta}{c \sqrt{T}}\big)^{-2\beta-1} \le s \le \frac{1}{z},\, z\in (0,\,c\sqrt{T})}.$$ We claim that the function
		\begin{align*}
			f_2(s, z) = z^2e^{-\frac{s^2 z^4}{2}},\quad (s, z)\in D
		\end{align*} is uniformly
		bounded in the domain $D$. In fact, for the above positive constant $a$, we have
		\begin{align*}
			f_2(s, z) =\frac{1}{s^2z^2} \left((sz^2)^2 e^{-\frac{(sz^2)^2}{2}}\right)\le \frac{a}{(sz)^2}\le a  \big( 1+\frac{z \eta}{c \sqrt{T}}\big)^{2(2\beta+1)} \le  2^{2(2\beta+1)}a.
		\end{align*}
		When $z\in (0,\,c\sqrt{T})$, using the mean value theorem  and making the change of variable $t = z^2 s$ together with the fact that 
		$f_2(s, z) $ is uniformly
		bounded in the above domain $D$, we conclude that  there exists a number $\eta'\in (0,1)$ such that 
		\begin{align*}
			{\int_{ z \big( 1+\frac{z \eta}{c \sqrt{T}}\big)^{-2\beta-1}  }^{z} \,e^{-\frac{t^2}{2}}\dif t} &={\int_{ \frac{1}{z} \big( 1+\frac{z \eta}{c \sqrt{T}}\big)^{-2\beta-1}  }^{\frac{1}{z}} \,\,z^2e^{-\frac{s^2z^4}{2}}\dif s}\\
			&\le C \frac{1}{z} \Big( 1-  \big( 1+\frac{z \eta}{c \sqrt{T}}\big)^{-2\beta-1}  \Big)\\
			&= C  (1+2\beta) \frac{1}{z} \big( 1+\frac{z \eta\eta'}{c \sqrt{T}}\big)^{-2\beta-2}  \frac{z \eta}{c \sqrt{T}} \\
			&\le \frac{C}{\sqrt{T}}.
		\end{align*}
		
		Finally, when $z\in [c\sqrt{T},\,\infty)$, we have that $${\Phi}(-z) \le {\Phi}(-c\sqrt{T})\le \frac{1}{2c\sqrt{T}}$$ and 
		$\nu(z)\le  \frac{c}{ 2\beta}  \sqrt{T}  \Big[ 2^{-2\beta}-1 \Big]$.
		Hence,
		\begin{align*}
			\Phi (\nu)&\le \Phi ( \frac{c}{ 2\beta}  \sqrt{T}  [2^{-2\beta}-1])\le \frac{\beta}{[1-2^{-2\beta}]c\sqrt{T}},\\
			\abs{  \bar{\Phi}(\nu) -\Phi (z)}&= \abs{  \bar{\Phi}(z) -\Phi (v)}\le  \bar{\Phi}(z) +\Phi (\nu)= {\Phi}(-z) +\Phi (\nu)\le \frac{C}{\sqrt{T}}.
		\end{align*}
	\end{proof}

\section*{Acknowledgments}
The authors wish to express their sincere thanks to the anonymous referees for their valuable suggestions and comments. 
 Y. Chen   is supported by the National Natural Science Foundation of China (No. 12461029);  Z. Tang are supported by  the Project of Basic Research of Science and Technology Plan of Gansu Province in 2025--Excellent doctoral program (25JRRA235) and  the ``Innovation Star" project of university graduate students in Gansu Province (2025CXZX-867); Y. Li  are supported by the Youth Project of Hunan Provincial Natural Science Foundation of China (No. 2024JJ6417), the Project of Education Department of Hunan Province (24B0187) and  the 111 Project (No. D23017); H. Yi  is supported by  the High level Talent Research Project of Liupanshui Normal University (LPSSYKYJJ202416).


\end{document}